\pgfplotsset{compat=newest}
\newtheorem{theorem}{Theorem}[section]
\newtheorem{lemma}[theorem]{Lemma}
\newtheorem{prop}[theorem]{Proposition}
\newtheorem{cor}[theorem]{Corollary}
\newtheorem{defn}[theorem]{Definition}
\newtheorem{assumption}[theorem]{Assumption}
\theoremstyle{remark}
\newtheorem{remark}[theorem]{Remark}
\numberwithin{equation}{section}
\newcommand{\R}{\mathbb{R}}{}
\newcommand{\N}{\mathbb{N}}
\newcommand{\bT}{\mathbb{T}}
\newcommand{\cA}{\mathcal{A}}
\newcommand{\cB}{\mathcal{B}}
\newcommand{\cV}{\mathcal{V}}
\newcommand{\cP}{\mathcal{P}}
\newcommand{\hi}{i}
\newcommand{\hs}{s}
\newcommand{\hv}{v}
\newcommand{\hx}{x}
\newcommand{\hp}{p}
\newcommand{\tsw}{t_\star}
\newcommand{\wtt}{\widetilde{t}\,}
\newcommand{\eps}{\varepsilon}
\DeclarePairedDelimiterX{\inp}[2]{\langle}{\rangle}{#1, #2}
\newcommand{\wto}{\rightharpoonup}
\newcommand{\weaks}{\stackrel{*}{\wto}}
\newcommand{\stkout}[1]{\ifmmode\text{\sout{\ensuremath{#1}}}\else\sout{#1}\fi}
\newcommand\restr[2]{{
  \left.\kern-\nulldelimiterspace 
  #1 
  \vphantom{|} 
  \right|_{#2} 
  }}
\title[Optimality of vaccination for a state constrained SIR epidemic]{Optimality of vaccination for an SIR epidemic\\ with an ICU constraint}
\author{Matteo Della Rossa}
\author{Lorenzo Freddi}
\author{Dan Goreac}
\address[Matteo Della Rossa, Lorenzo Freddi]{Dipartimento di Scienze Matematiche, Informatiche e Fisiche,  Universit\`a di Udine, via delle Scienze 206, 33100 Udine, Italy}
\email{matteo.dellarossa@uniud.it, lorenzo.freddi@uniud.it}
\address[Dan Goreac]{\'{E}cole d'Actuariat, Universit\'{e} Laval, Qu\'{e}bec (Qu\'{e}bec), G1V 0A6, Canada}
\address[Dan Goreac]{School of Mathematics and Statistics, Shandong University, Weihai, Weihai 264209, PR China}
\address[Dan Goreac]{LAMA, Univ Gustave Eiffel, UPEM, Univ Paris Est Creteil, CNRS, F-77447 Marne-la-Valle\'ee, France}
 \email{dan.goreac@univ-eiffel.fr}
\begin{document}

\begin{abstract}
This paper studies an optimal control problem for a class of SIR epidemic models, in scenarios in which the infected population is constrained to be lower than a critical threshold imposed by the ICU (intensive care unit) capacity. The vaccination effort possibly imposed by the health-care deciders is classically modeled by a  control input affecting the epidemic dynamic.
After a  preliminary viability analysis  the existence of  optimal controls is established, and their structure is characterized by using a state-constrained version of Pontryagin's theorem. The resulting optimal controls necessarily have a bang-bang regime with at most one switch. More precisely, the optimal strategies impose the maximum-allowed vaccination effort in an initial period of time, which can cease only once the ICU constraint can be satisfied without further vaccination. 
The switching times are characterized in order to identify conditions under which vaccination should be implemented or halted.  The uniqueness of the optimal control is also  discussed. Numerical examples illustrate our theoretical results and the corresponding optimal strategies. The analysis is eventually extended to the infinite horizon by $\Gamma$-convergence arguments.
\end{abstract}

\maketitle



\textbf{Keywords:}  SIR epidemic; state constraints; viability analysis; optimal control; feedback control;  vaccination.

\textbf{2020 Mathematics Subject Classification:} 49J15, 49J45, 92D30, 
 93C15.

\section{Introduction}

In the face of infectious disease outbreaks, vaccination strategies play a pivotal role in curbing transmission and mitigating the impact on public health. The optimization of vaccination deployment, particularly in the context of epidemic modeling, is a critical endeavor to ensure the efficient allocation of limited resources while maximizing population immunity.

The optimal control of epidemics in the framework 
of the Susceptible-Infectious-Recovered (SIR) model of Kermack and McKendrick (\cite{kermack1927contribution})
has been extensively studied (\cite{anderson1992infectious,Behncke,hansen2011optimal,Mart,LS2011,GK2014,LT2015,ZYHetal2017,SFT2022}) in relation to both the use of pharmaceutical (e.g., optimality of vaccination) and non-pharmaceutical (e.g., slowdown 
 and lockdown) strategies.  

In this study, we delve into the optimization of vaccination strategies for an SIR epidemic, taking into account the constraints imposed by intensive care units  (ICU) capacity. 
Optimal control problems with such kind of constraints have been posed in a natural way during the recent Covid-19 pandemic  and have been investigated in the case of a non-pharmaceutical control (\cite{MSW2022,AvrFre22,AFG22cor,FGLX2022,FreddiGoreac23,BredaDel24}).  
As well as in the case of non-pharmaceutical interventions, the interplay between vaccination coverage, disease dynamics, and healthcare resource utilization presents a complex landscape that requires careful analysis and optimization. The main, well-known, mathematical difficulty lies on the fact that the ICU constraint involves the state of the system. On the other hand, the theory of state constrained optimal control problems has been settled, from the point of view of optimality conditions, only in recent years  (\cite{VP1982,HarSetSur95,BdlV2010,BdlVD2013,FMM2018}). Among the different approaches to derive optimality conditions of the first order by allowing state constraints, we choose here the one that consists in introducing multipliers that are bounded  measures, leading to adjoint (co-state) variables in the space of functions with  bounded variation (\cite{BdlV2010,BdlVD2013}).       

By exploring the optimality of vaccination deployment in scenarios with ICU constraints, this research aims to provide a fundamental, rigorous and detailed mathematical analysis capable 
to contribute to the development of
prevalence-based strategies for epidemic control and public health decision-making. 
Being concentrated on mathematical analysis, we consider here the following simple two-dimensional SIR model
\begin{equation}\label{eq:SIR2}
\begin{cases}
\dfrac{ds}{dt}(t)=-\beta\, s(t)\, i(t)-v(t)s(t),%
\\[2ex]
\dfrac{di}{dt}(t)= \beta\, s(t)\, i(t)-\gamma i(t),\\[1ex]
s(0)=s_0,\ i(0)=i_0,
\end{cases}
\quad t\in[0,T),
\end{equation}
on a time horizon with final time $T>0$ (not necessarily finite).  
Here, the states $s$ and $i$ are the population densities of susceptible and infectious individuals, respectively, while 
 the control $v(t)$ represents the vaccination rate and belongs to a bounded set $[0,v_M]$, and the derivatives are meant in a distributional sense. The constant coefficients $\beta\in(0,1]$ and $\gamma>0$ 
are, respectively, the transmission and recovery rates. 
The qualitative properties of solutions to~\eqref{eq:SIR2} are studied and illustrated in Section~\ref{sec:QualitativeAnalysis}
.

Our aim is to study 
the optimal control problem for system \eqref{eq:SIR2} which consists in minimizing, over all controls $v$ and the corresponding epidemic trajectories $s$ and $i$, the objective functional
$$
J(v,s,i)=\int_0^{T} \lambda_v v(t)+\lambda_i i(t) \,dt,
$$
under the state constraint
$$
i(t)\le i_M\quad\forall\,t\in[0,T],
$$
where $i_M>0$ represents a safety threshold for the ICUs capacity. The constant coefficients $\lambda_v\ge0$ and  $\lambda_i\ge0$ modulate the economic and health-related costs of infection with respect to the cost of vaccination. 

Generally speaking, the existence of a control able to keep the state $i(t)$ under the level $i_M$ is not ensured for all initial epidemic states $(s_0,i_0)$.  Thus, a preliminary viability analysis is performed in Section~\ref{subsec:Viability}, to characterize the maximal viable set of initial conditions.  The same analysis also provides the maximal set of, so called,  ``safe" initial states such that all controls are viable.  
In the same section we prove existence of an optimal control  whose structure will be characterized, after, by using necessary optimality condition of the first order provided by a state constrained version of Pontryagin's theorem (\cite{BdlV2010,BdlVD2013}). We prove that singular arcs cannot occur and the optimal controls are bang-bang 
with at most one switch. More specifically, the optimal vaccination strategy consists in exerting the maximum effort, $v_M$,  from the onset of the epidemic and do nothing else once this maximal  intervention has been implemented.
These results recover those obtained in \cite[Theorem 2.1]{Behncke}
where the ICU constraint is not imposed, and are in agreement with those in \cite{CFFGL_2024}. Moreover, it can be noted that they mark a  qualitative difference from the non-pharmaceutical optimal  control obtained in 
\cite{AvrFre22,AFG22cor,FreddiGoreac23}.  Indeed, in the latter, the optimal strategy consists in delaying the non-pharmaceutical control action by doing nothing in the first part of the epidemic,  by applying   the maximal control effort, only when constrained,  
until  reaching the level $i_M$ and, after, by stabilizing the infections at this level until herd immunity is reached. 

{\color{black} The simplest, non-trivial, scenario occurs when the density $i$ of the infected population is  constrained under the level $i_M$ but does not contribute to the cost functional, that is $\lambda_i=0$. In this case the vaccination must cease  exactly when the control~$0$ allows to take the infections under $i_M$. When, instead, the cost functional directly depends also on $i$, }the switching time (i.e., the instant at which the optimal  strategies switch from $v_M$ to $0$) depends on the ratio between  $\lambda_v$ and $\lambda_i$.  
{\color{black} This dependence is analyzed in Subsection \ref{subsec_charOC}.}

Subsection \ref{ss_uniqueness} is devoted to  discuss the uniqueness problem in the general case. 
Our results are  graphically illustrated at the end of Section \ref{sec:OptimalityPontryagin} with the aid of numerical examples.

In Section~\ref{sec:InfiniteHorizon} we extend our analysis to the infinite-horizon case by using a $\Gamma$-convergence argument. The main result (Theorem \ref{thm:CarInfiniteHor}) states that there exists at least an optimal bang-bang control. Moreover, in the case $\lambda_i=0$, the latter is the unique solution to the optimal control problem.


\textbf{Notation}. Along the whole paper the symbol $\equiv$ means ``equal almost everywhere". Moreover,  $\R_+:=[0,\infty)$. The notation used for spaces of functions is standard. Namely, for maps defined on an interval $I$ and values in a subset $E$ of $\R^d$ we denote by $C^k(I;E)$  the space of functions with continuous $k$-th derivatives,   $L^p(I;E)$ the Lebesgue space of (equivalence classes of) $p$-summable (if $p\in[1,+\infty)$) or essentially bounded (if $p=\infty$) functions,  
and $W^{1,p}(I;E)$ the Sobolev space of
(equivalence classes of) functions that are in $L^p$  together with their distributional derivative.
The reference to the set $E$ is often omitted when $E=\R$.



\section{Qualitative analysis of epidemic trajectories} 
\label{sec:QualitativeAnalysis}

We now collect some properties of (solutions to) system~\eqref{eq:SIR2} under arbitrary inputs~$v\in \cV:=L^\infty((0,\infty);V)$ with $V:= [0,v_M]$ and $v_M\ge0$.  
In this model, the control $v$ represents the vaccination rate and $v_M$ the maximal vaccination effort that can be done in the time unit. 


We define the vector field $f:\R^2\times \R\to \R^2$ corresponding to the Cauchy problem in~\eqref{eq:SIR2},  by setting \begin{equation}\label{eq_f}
f(s,i,v):=\big(
-\beta si-vs,\,
\beta si-\gamma i\big).
\end{equation}

\begin{theorem}\label{th_euri}
For every $s_0,i_0\in(0,1)$, every $\beta>0$, $\gamma\ge0$ and every control $v\in \cV$, there exists a unique absolutely continuous  solution $(s,i)$  to the Cauchy problem ~\eqref{eq:SIR2}  on the interval $[0,\infty)$. Moreover,
\begin{enumerate}
\item $s$ and $i$ are strictly positive;
\item $i\in C^1([0,\infty))$,  $s\in W^{1,\infty}([0,\infty))$;
\item the solution $(s,i)$ satisfies
\begin{equation}\label{eq_siidint}
s(t)+i(t)-s_0-i_0=-\frac{1}{\beta}\int_0^t v(\xi)\frac{i'(\xi)}{i(\xi)}\,d\xi+\frac{\gamma}{\beta}\log(\frac{s(t)}{s_0})
\end{equation}
for all $t\in[0,\infty)$;
\item if $v$ is (essentially) constant equal to $v_0\in [0,v_M]$, then the solution $(s,i)$ satisfies the identity 
\begin{equation}\label{eq_siid}
s+i-s_0-i_0+\frac{v_0}{\beta}\log(\frac{i}{i_0})-\frac{\gamma}{\beta}\log(\frac{s}{s_0})=0;
\end{equation}
\item $s$ is strictly decreasing;
\item $\displaystyle i_\infty:=\lim_{t\to\infty}i(t)=0$ and  $\displaystyle s_\infty:=\lim_{t\to\infty}s(t)\in[0,\frac{\gamma}{\beta})$;
\item setting $t_\frac{\gamma}{\beta}:=\inf\{t\ge0\ \vert\ s(t)\le\frac{\gamma}{\beta}\}$, we have $t_\frac{\gamma}{\beta}\le
\frac{1}{\gamma i_0}\big(s_0-\frac{\gamma}{\beta})^+.$
\end{enumerate}
\end{theorem}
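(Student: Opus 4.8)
The plan is to treat the system as a Carathéodory ODE with the bounded measurable input $v$ frozen, and to build the global solution from local existence plus a priori bounds. First I would observe that for fixed $v\in\cV$ the field $f(\cdot,\cdot,v(t))$ is polynomial in $(s,i)$, hence locally Lipschitz in the state and measurable in $t$, so the Carathéodory existence/uniqueness theorem yields a unique maximal absolutely continuous solution on some $[0,T_{\max})$. The crucial structural observation, which simultaneously delivers positivity and global existence, is that each equation is linear in its own state variable once the other variable is regarded as a coefficient: writing $s'=-(\beta i+v)\,s$ and $i'=(\beta s-\gamma)\,i$ and integrating gives the Duhamel representations $s(t)=s_0\exp\!\big(-\int_0^t(\beta i+v)\big)$ and $i(t)=i_0\exp\!\big(\int_0^t(\beta s-\gamma)\big)$, valid on $[0,T_{\max})$ since the exponents are finite there; this proves (1). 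For the a priori bound I would add the two equations to get $(s+i)'=-vs-\gamma i\le0$, so $s+i$ is nonincreasing and hence $0<s,i\le s_0+i_0$; boundedness excludes finite-time blow-up and extends the solution to all of $[0,\infty)$.

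Once boundedness is secured, claim (2) is immediate: $i'=(\beta s-\gamma)\,i$ is a product of continuous functions, hence continuous, so $i\in C^1$, while $s'=-(\beta i+v)\,s\in L^\infty$ because $v\in L^\infty$ and $s,i$ are bounded, so $s\in W^{1,\infty}$. For the identity (3), rather than integrating the dynamics directly I would verify that the candidate expression $s+i-s_0-i_0+\frac1\beta\int_0^t v\,\frac{i'}{i}-\frac\gamma\beta\log\frac{s}{s_0}$ has a.e. derivative zero and vanishes at $t=0$: substituting $i'/i=\beta s-\gamma$ and $s'/s=-(\beta i+v)$ into its derivative produces exact cancellation, and the integral is well defined since $i'/i$ is bounded and $v\in L^\infty$. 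Claim (4) is then the specialization of (3) to $v\equiv v_0$, where the integral collapses to $\frac{v_0}{\beta}\log(i/i_0)$. Monotonicity (5) follows because $\beta i+v\ge\beta i>0$ forces $s'<0$ a.e., so $s$ is strictly decreasing.

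For the asymptotics (6), monotonicity and positivity give a limit $s_\infty\in[0,s_0)$, and since $s+i$ is nonincreasing and bounded below it converges as well, whence $i$ converges to some $i_\infty\ge0$. Integrating $(s+i)'=-vs-\gamma i$ over $[0,\infty)$ shows $\int_0^\infty\gamma i<\infty$; a nonnegative convergent integrable function must tend to zero, so $i_\infty=0$ (here $\gamma>0$ enters). To pin down $s_\infty<\gamma/\beta$ I would argue by contradiction: if $s_\infty\ge\gamma/\beta$ then $s(t)\ge s_\infty\ge\gamma/\beta$ for all $t$, so $i'=(\beta s-\gamma)\,i\ge0$, making $i$ nondecreasing and forcing $i_\infty\ge i_0>0$, a contradiction. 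I expect this monotonicity-versus-decay dichotomy to be the key idea of the proof; the main recurring subtlety is carrying the merely $L^\infty$ control correctly through the integrability and monotonicity arguments rather than any single hard estimate.

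Finally, for the hitting-time bound (7) I would restrict to $s_0>\gamma/\beta$, the case $s_0\le\gamma/\beta$ giving $t_{\gamma/\beta}=0$ and a vanishing right-hand side. On $[0,t_{\gamma/\beta})$ one has $s>\gamma/\beta$, hence $i'>0$ and $i\ge i_0$; consequently $s'=-\beta si-vs\le-\beta si\le-\gamma i\le-\gamma i_0$, and integrating yields $\gamma i_0\,t\le s_0-s(t)<s_0-\gamma/\beta$ for every such $t$. Letting $t\uparrow t_{\gamma/\beta}$ gives the stated estimate $t_{\gamma/\beta}\le\frac{1}{\gamma i_0}\big(s_0-\frac{\gamma}{\beta}\big)^{+}$.
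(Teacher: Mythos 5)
Your proposal is correct and follows essentially the same route as the paper: Carath\'eodory local existence, positivity via the exponential (Duhamel) representation of each component with the other frozen as a coefficient, the a priori bound from $(s+i)'=-vs-\gamma i\le 0$ to get global existence, and the same monotonicity-versus-decay arguments for the asymptotics and the hitting-time bound. The only cosmetic differences are that you verify identity (3) by differentiating the candidate expression rather than integrating and substituting, and for item (6) you deduce $i_\infty=0$ from integrability of $i$ and obtain $s_\infty<\gamma/\beta$ by contradicting $i_\infty=0$ instead of contradicting the positivity of $s$; both variants rest on the same inequalities the paper uses.
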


\begin{proof}
Since the function $f:\R^2\times \R\to \R^2$ is locally Lipschitz, then we have local existence and uniqueness of an absolutely continuous solution (see for instance \cite[I.5]{Hale_ODE}), for any $v\in \cV$. Let $[0,c)$,  with $c>0$, be a time interval in which the unique solution $(s,i)$ exists.  


Let us claim that $s(t)>0$ and $i(t)>0$ for every $t\in [0,c)$. Indeed,
considering $i$ as a coefficient, the function $s$ is the unique solution {\color{black}to} the linear Cauchy problem
$$
\begin{cases}
s'=-(\beta i+v) s, \\
s(0)=s_0,
\end{cases}
$$
which is given by
$$
s(t)=s_0 e^{-\int_0^t[\beta i(\xi)+v(\xi)]\,d\xi}\quad\forall\,t\in [0,c).
$$
Hence, $s$ is strictly positive in $[0,c)$. Similarly, one can prove that also $i$ is strictly  positive. 

By summing the two equations of the system we get
\begin{equation}\label{d(s+i)}
(s+i)'=-vs-\gamma i< 0\ \mbox{ in }[0,c),
\end{equation}
which implies that $s+i$ is decreasing in $[0,c)$. Then we have
\begin{equation}\label{s+ile}
0<s(t)+i(t)\le s_0+i_0\quad\forall\,t\in [0,c).
\end{equation}
{\color{black} Being positive, the solutions are  bounded. Thus, they} exists for every $t\in[0,\infty)$ and all properties above hold with $c=\infty$.  This proves  assertion {\em(1)} and the first part of the statement.

Simply looking at the equations, one sees that $i$ has continuous derivatives, while
$s$ has bounded derivatives, which proves {\em(2)}.

Assertion {\em(3)} is proved by integrating \eqref{d(s+i)} 
$$
s+i-s_0-i_0=-\int_0^tv(\xi)s(\xi)d\xi-\gamma\int_0^ti(\xi)d\xi
$$
and substituting in the integrals the following expressions of $s$ and $i$
\begin{equation}\label{eq_si}
s=\frac{1}{\beta}\frac{i'}{i}+\frac{\gamma}{\beta},\quad 
i=-\frac{1}{\beta}\frac{s'}{s}-\frac{v}{\beta},
\end{equation}
as obtained by the second and the first equation of the system, respectively. Assertion {\em(4)} is a straightforward consequence of {\em(3)}, while {\em(5)} follows from {\em(1)} and the first equation of the system.

{\em(6)} By monotonicity and positivity, the limit  $s_\infty:=\lim_{t\to\infty}s(t)$ exists and belongs to $[0,1)$.   
Since $s+i$ is decreasing (see \eqref{d(s+i)}) and positive, then $\lim_{t\to\infty}\big(s(t)+i(t)\big)$ exists and is finite. 
As a consequence, also the limit
$$
i_\infty:=\lim_{t\to\infty}i(t)=\lim_{t\to\infty}\big(s(t)+i(t)-s(t)\big)=\lim_{t\to\infty}\big(s(t)+i(t)\big)-s_\infty
$$
exists and is finite and non-negative.

Assume by contradiction that $i_\infty >  0$. Then there exists $\bar{t}$ such that $i(t)>\frac{i_\infty}{2}>0$ for any $t>\bar{t}$.   
By \eqref{d(s+i)}, for such $t$ we have 
$$
(s+i)'\le -\gamma \frac{i_\infty}{2}
$$
and hence $s+i\to-\infty$, which is impossible since $s+i>0$. Then we conclude that $i_\infty=0$. 

It remains to prove that $s_\infty<\frac{\gamma}{\beta}$. Since $s$ is strictly decreasing, if $s_0\le\frac{\gamma}{\beta}$, then there is nothing to prove. 
Let us then assume that $s_0>\frac{\gamma} {\beta}$. 
Let us denote by
$$
t_{\max}:=\sup\{t\ge0\ \vert\ s(t)>\dfrac{\gamma}{\beta}\}
$$
and assume, by contradiction, that $t_{\max}=\infty$.  We have 
$$
i'(t)=\big(\beta s(t)-\gamma\big)i(t)>0\quad \forall\, t>0,
$$
hence $i$ is increasing, {\color{black} so obtaining a contradiction with $i_0>0$ and $i_\infty=0$. Then $t_{\max}<\infty$ and, by continuity, $s(t_{\max})=\frac{\gamma}{\beta}$.  Since $s$ is strictly decreasing, this implies $s_\infty<\frac{\gamma}{\beta}$.}

To prove  {\em(7)} we can assume that $s_0>\frac{\gamma}{\beta}$ (otherwise, we have $t_\frac{\gamma}{\beta}=0$ and the inequality is trivially satisfied). Moreover, by continuity of $s$, we have
\begin{equation}\label{tgsb}
t_\frac{\gamma}{\beta}=\sup\{t\ge0\ \vert\ s(t)>\dfrac{\gamma}{\beta}\}.
\end{equation}
As long as $s(t)>\frac{\gamma}{\beta}$ we have that $i$ is increasing  because,  as before, 
$i'(t)>0$ and 
 we have
$$
s'(t)=-\beta s(t) i(t)-v(t)s(t)\le 
-\beta s(t) i(t)
<-\gamma i(t)
\le -\gamma i_0.$$
Thus, by integrating on $[0,t]$, we get
$$
s(t)-s_0<-\gamma i_0t.
$$
and, 
by using $s(t)>\frac{\gamma}{\beta}$ again,  we obtain 
$\frac{\gamma}{\beta}-s_0<-\gamma i_0t$, which implies
$$
t<
\frac{1}{\gamma i_0}\big(s_0-\frac{\gamma}{\beta})\quad \forall t\ge0\ :\ s(t)>\frac{\gamma}{\beta},
$$
and the desired inequality follows by taking the supremum and using  \eqref{tgsb}. 
\end{proof}

\begin{remark}\label{rem:Boundedness}
As a by-product of Theorem~\ref{th_euri} (assertion ~\emph{(1)})  and of equation \eqref{s+ile}  which holds for $c=\infty$, we have that the triangle $$\bT:=\{(s,i)\in \R_+^2\;\vert \;s+i\leq 1\}$$ is \emph{forward invariant}, that is, for any $(s_0,i_0)\in \bT$ and any $v\in \cV$, the solution $(s,i)$ to the Cauchy problem~\eqref{eq:SIR2} with initial conditions $s(0)=s_0$, $i(0)=i_0$ satisfies $(s(t),i(t))\in \bT$ for all $t\in \R_+$. 
\end{remark}

\begin{remark} From \eqref{eq_siid} it is easy to see that:
\begin{enumerate}[leftmargin=*]
\item if $v\equiv0$ then  $s_\infty>0$ (see also \cite[Proposition 2.4]{FreddiGoreac23});
\item if $v\equiv v_0>0$ then $s_\infty=0$. This can be also noted by considering the function $U:\bT\to \R_+$ defined by $U(s,i):=s+i$. It can be proved that $\inp{\nabla U(s,i)}{f(s,i,v_0)}\leq -\alpha U(s,i)$ for every $(s,i)\in \bT$, with $\alpha:=\min\{v_0,\gamma\}>0$. This implies that the function $U$ is a Lyapunov function, proving that the unique equilibrium $(0,0)$ is exponentially stable in $\bT$, see for instance~\cite[Theorem 4.10]{Khalil2002}.
\end{enumerate}
\end{remark}

\begin{remark}\label{rem_cd}
Looking at {\em(4)} of Theorem \ref{th_euri}, for every constant $v\in[0,v_M]$ and every $(s_0,i_0)\in(0,1)^2$ we denote by   $F_{s_0,i_0}:(0,1)^2\to\R$ the function 
 defined as
$$
F_{s_0,i_0}(s,i):=s+i-s_0-i_0+\frac{v}{\beta}\log(\frac{i}{i_0})-\frac{\gamma}{\beta}\log(\frac{s}{s_0}).
$$
By a trivial computation,  for every $(s_0,i_0),\, (s_1,i_1)\in(0,1)^2$ we have 
$$
 F_{s_0,i_0}(s_1,i_1)=0\ \implies\ F_{s_0,i_0}\equiv F_{s_1,i_1}.
$$
This means that the family of curves implicitly defined in {\em(4)} of Theorem \ref{th_euri} by  $F_{s_0,i_0}(s,i)=0$ are coinciding or disjoint as  $(s_0,i_0)$ varies in $(0,1)^2$. In other words,  the trajectories of solutions corresponding to the same constant control $v$ and starting from different initial conditions are coinciding or disjoint. 
\end{remark}

We now provide a concluding lemma, establishing the boundedness of the \emph{integral}, over the whole interval $[0,\infty)$, of the $i$-component of the solution. 
\begin{lemma}\label{lemma:BoundIntegral}
For every $(s_0,i_0)\in \bT$, every $\beta>0$, $\gamma\ge0$ and every control $v\in \cV$, let us denote by $(s,i):\R_+\to \R^2$ the solution to the Cauchy problem~\eqref{eq:SIR2} with initial conditions $s(0)=s_0$, $i(0)=i_0$, and with respect to the control $v$. The integral
\[
\int_0^{\infty} i(t)\;dt
\]
is finite.
\end{lemma}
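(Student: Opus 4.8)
The plan is to avoid any pointwise control of $i$ and instead to integrate the equation governing the total population $s+i$, exploiting that the vaccination term carries a favourable sign. Concretely, I would start from \eqref{d(s+i)}, i.e. $(s+i)'=-vs-\gamma i$, and integrate it over an arbitrary interval $[0,t]$ to obtain $s(t)+i(t)-s_0-i_0=-\int_0^t v(\xi)s(\xi)\,d\xi-\gamma\int_0^t i(\xi)\,d\xi$ (this is precisely the intermediate identity already used in the proof of assertion (3) of Theorem~\ref{th_euri}). All quantities are well defined because $s$ is absolutely continuous and $v\in\cV\subset L^\infty$, so $vs$ is integrable on every bounded interval.

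Rearranging and using the non-negativity of $s$ and $i$ (forward invariance of $\bT$, Remark~\ref{rem:Boundedness}, together with assertion (1) of Theorem~\ref{th_euri}) and of the control $v\ge0$, I would get the key estimate $\gamma\int_0^t i(\xi)\,d\xi = s_0+i_0-s(t)-i(t)-\int_0^t vs\,d\xi \le s_0+i_0$, valid for every $t\ge0$. The crucial observation is that this bound is \emph{uniform} in $t$: discarding the two manifestly non-negative terms $s(t)+i(t)$ and $\int_0^t vs$ leaves a constant on the right-hand side. Since $i\ge0$, the map $t\mapsto\int_0^t i$ is non-decreasing, so by monotone convergence its limit as $t\to\infty$ exists in $[0,+\infty]$ and satisfies $\int_0^{\infty} i(t)\,dt\le (s_0+i_0)/\gamma<\infty$, which is the claim. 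Here I use $\gamma>0$ (the standing assumption on the model); note that for $\gamma=0$ the statement genuinely fails, e.g. for $v\equiv0$, where $i$ increases to $s_0+i_0>0$.

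There is essentially no hard step here — the only thing to get right is the choice of which combination to integrate, and the sign bookkeeping that lets one drop the $vs$ term. If instead one wanted an argument that tracks decay rates, I would fall back on assertion (6) of Theorem~\ref{th_euri}: since $s$ is strictly decreasing with $s_\infty<\gamma/\beta$, one fixes $\delta:=(\gamma/\beta-s_\infty)/2>0$ and $\bar t$ with $s(\bar t)<\gamma/\beta-\delta$, so that $i'=(\beta s-\gamma)i\le-\beta\delta\,i$ on $[\bar t,\infty)$ and hence $i(t)\le i(\bar t)\,e^{-\beta\delta(t-\bar t)}$; combined with the continuity (hence boundedness) of $i$ on the compact interval $[0,\bar t]$, this again yields integrability. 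I would nevertheless present the first, sharper and cleaner argument.
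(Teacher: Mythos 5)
Your proposal is correct, and your primary argument is genuinely different from (and in some respects sharper than) the one in the paper. The paper proves the lemma by invoking assertion \emph{(6)} of Theorem~\ref{th_euri} ($s_\infty<\gamma/\beta$), choosing $\varepsilon>0$ and $T$ with $s(t)\le\gamma/\beta-\varepsilon$ for $t\ge T$, deducing $i'\le-\beta\varepsilon i$ there, and concluding by Gr\"onwall that $i$ decays exponentially --- this is exactly your ``fallback'' argument. Your main argument instead integrates $(s+i)'=-vs-\gamma i$ and drops the non-negative terms $s(t)+i(t)$ and $\int_0^t vs$, yielding the uniform bound $\int_0^t i\le(s_0+i_0)/\gamma$ and hence the claim by monotone convergence. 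What your route buys: an explicit quantitative bound $(s_0+i_0)/\gamma$ that is independent of $\beta$, of the control and of the time horizon; no reliance on the asymptotic statement \emph{(6)} (itself proved by a separate contradiction argument); and a transparent identification of where positivity of $\gamma$ enters. What the paper's route buys: the stronger conclusion that $i$ decays exponentially for large $t$, which is more information than mere integrability. Your side remark is also well taken: the lemma is stated for $\gamma\ge0$, but for $\gamma=0$ and $i_0>0$ the integral diverges (e.g.\ $v\equiv0$ gives $i$ non-decreasing towards $s_0+i_0$), so the standing assumption $\gamma>0$ from the introduction is genuinely needed --- a hypothesis mismatch that affects the paper's own proof as well, since for $\gamma=0$ the interval $[0,\gamma/\beta)$ in assertion \emph{(6)} is empty.
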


\begin{proof}
By Item~\emph{(6)} of Theorem~\ref{th_euri} we have that $s_\infty=\lim_{t\to \infty}s(t)<\frac{\gamma}{\beta}$, let us suppose that $s_\infty=\frac{\gamma}{\beta}-2\varepsilon$, for some $\varepsilon>0$. This implies that there exists $T>0$ such that $s(t)\leq \frac{\gamma}{\beta}-\varepsilon$, for all $t\geq T$. We have
\[
i'(t)=\beta s(t)i(t)-\gamma i(t)\leq \beta (\frac{\gamma}{\beta}-\varepsilon)i(t)-\gamma i(t)=-\beta \varepsilon i(t),\;\;\;\forall \,t\geq T.
\]
By Gr\"onwall's inequality (see~\cite[Lemma 2.7]{Teschl12}), we have that $i(t)\leq i(T)e^{-\beta\varepsilon (t-T)}$ for all $t\geq T$.
Concluding, this implies that
\[
\int_0^{\infty} i(t)\,dt=\int_0^{T} i(t)\,dt+\int_T^{\infty} i(t)\,dt\leq \int_0^{T} i(t)\,dt+i(T)\int_T^{\infty} e^{-\beta\varepsilon (t-T)}\,dt <\infty,
\]
as required.
\end{proof}

\section{Viability analysis and optimal control problem}\label{sec:MAInSection}
We now want to introduce and study a class of optimal control problems, in which the state dynamics are driven by the SIR model~\eqref{eq:SIR2}. 
We introduce the following notational convention.
\begin{remark}[Notation]\label{dsn}{
 Whenever the control $v\in \cV$ and the initial conditions $s(0)=s_0,\ i(0)=i_0$ are fixed,  the unique solution to \eqref{eq:SIR2} will be also denoted by $x^{s_0,i_0,v}:=(s^{s_0,i_0,v}, i^{s_0,i_0,v})$. However, in the sequel, when the initial conditions are fixed or can be easily deduced from the context, we simply write  $x^v=(s^{v}, i^{v})$, or even $x=(s, i)$ if also the control is fixed or easy to deduce. 
This notation will be used  throughout the whole paper. 
 }
 \end{remark}


Given some $\lambda_i,\lambda_v\geq 0$, $\gamma,\beta\geq 0$, $v_M\ge0$,  $0<i_M\leq 1$ we consider  the optimal control problem
\begin{equation}\label{eq:OptimalControlProblemNoDelay}
\begin{aligned}
\mbox{minimize }
J(v,s,i)&=\int_0^{T}\lambda_v v(t)+\lambda_i i(t)  \,dt,\;\\
&(s(t),i(t))'= f(s(t),i(t),v(t)),\quad
t\in(0,T), \\
&(s(0),i(0))=(s_0,i_0)\in \mathbb{T},\\
&i(t)\leq i_M\,\quad
\forall t\in(0,T),\\
&v\in L^\infty((0,T);V),\  V:=[0, v_M],
\end{aligned}
\end{equation}
where $f$, defined in \eqref{eq_f}, is the vector field associated to the SIR model  \eqref{eq:SIR2}, 
and the fixed final time $T>0$ can be finite or not. In the sequel, this problem will be denoted by $\cP^{T,s_0,i_0}_{\lambda_v,\lambda_i,i_M}$. Accordingly, $\cP^{\infty,s_0,i_0}_{\lambda_v,\lambda_i,i_M}$ will denote the problem on the \emph{infinite horizon} $[0,\infty)$.
Sometimes, to shorten notation, the reference to the initial conditions or to the coefficients $\lambda_v$ and $\lambda_i$  will be  dropped when they are not relevant or can be deduced by the context.

\begin{remark}
Let us observe that, by Remark \ref{rem:Boundedness}, we have that $i(t)\le1$ for every $t\ge0$. Hence, the case $i_M=1$ corresponds to a state-unconstrained case (i.e., problem  $\cP^T_{\lambda_v,\lambda_i,1}$ coincides with \eqref{eq:OptimalControlProblemNoDelay} without the state constraint $i\le i_M$). This unconstrained case has been studied in \cite[Section 2]{Behncke}. The results of that paper concerning the optimal  vaccination policy are recovered by us, in this more general setting,  except for the   uniqueness of the optimal control conjectured in Remark 2 of  \cite{Behncke}. On the other hand, its proof seems not to be completely achieved yet.  The subsequent  Subsection \ref{ss_uniqueness} is devoted to discuss this topic.  
\end{remark}

Occasionally, in the sequel, we will make use of the  {\color{black} following proposition,  usually called}  {\em principle of optimality}.

\begin{prop}[principle of optimality]\label{Prop:PrincipleOfOptim}
If $v$ is an optimal control for problem $\cP^{T,s_0,i_0}_{\lambda_v,\lambda_i,i_M}$, then for every $\eps\in(0,T)$ the control $v_\eps(\cdot):=v(\, \cdot\, +\,\eps)$ is optimal for    problem $\cP^{T-\eps,s_\eps,i_\eps}_{\lambda_v,\lambda_i,i_M}$ with the initial condition $s_\eps=s^{s_0,i_0,v}(\eps)$ and $i_\eps=i^{s_0,i_0,v}(\eps)$.   
\end{prop}
\begin{proof}
{\color{black}
By contradiction, let us suppose that there exists $w:[0,T-\eps]\to\R$  such that
$$
\int_0^{T-\eps}\lambda_v w(t)+\lambda_ii^{s_\varepsilon, i_\varepsilon,w}(t)\,dt<
\int_0^{T-\eps}\lambda_v v_\varepsilon(t)+\lambda_ii^{s_\varepsilon, i_\varepsilon,v_\varepsilon}(t)\,dt.
$$
Define  $\widetilde v:[0,T]\to \R$ by
\[
\widetilde v(t)=\begin{cases}
v(t)&\text{if }t\in [0,\varepsilon),\\
w(t-\varepsilon) &\text{if }t\in [\varepsilon, T].
\end{cases}
\]
We have that $\widetilde v$ is an admissible control and 
\[
\begin{aligned}
\int_0^T \lambda_v \widetilde v(t)+\lambda_ii^{s_0, i_0,\widetilde v}(t)\,dt&=\int_0^\varepsilon \lambda_v  v(t)+\lambda_ii^{s_0, i_0, v}(t)\,dt+\int_\varepsilon^{T}\lambda_v \widetilde v(t)+\lambda_ii^{s_\varepsilon, i_\varepsilon,\widetilde v}(t)\,dt\\
&=\int_0^\varepsilon \lambda_v  v(t)+\lambda_ii^{s_0, i_0, v}(t)\,dt+\int_0^{T-\eps}\lambda_v w(t)+\lambda_ii^{s_\varepsilon, i_\varepsilon,w}(t)\,dt<
\\&<\int_0^\varepsilon \lambda_v  v(t)+\lambda_ii^{s_0, i_0, v}(t)\,dt+ \int_0^{T-\eps}\lambda_v v_\varepsilon(t)+\lambda_ii^{s_\varepsilon, i_\varepsilon,v_\varepsilon}(t)\,dt\\&
=\int_0^T \lambda_v  v(t)+\lambda_ii^{s_0, i_0, v}(t)\,dt
\end{aligned}
\]
which contradicts the optimality of $v$.}
\end{proof}

\subsection{Viability Analysis}\label{subsec:Viability}
This subsection is devoted to identify for which initial conditions the state-constraint $i(t)\leq i_M$ is satisfied/satisfiable by the solutions to the state equation of problem~\eqref{eq:OptimalControlProblemNoDelay}. 

Let us introduce the state-constraint set  
\begin{equation}\label{eq:StateConstSetC}
C:=\{(s,i)\in \bT\;\vert\;i\leq i_M\}.
\end{equation}
The notions of forward invariant and viable subregions of $C$ are then introduced according to the next definition.

\begin{defn}\label{def_viableset}
The set 
$$
\cB:=\{(s_0,i_0)\in C\ \vert\ \exists\,v\in \mathcal{V}\mbox{ s.t.\ }i^{s_0,i_0,v}(t)\leq i_M\ \forall\,t\ge0\}
$$
is called  {\em maximal  feasible} or {\em viable} set. 

The set 
$$
\cA_0:=\{(s_0,i_0)\in C\ \vert \ i^{s_0,i_0,0}(t)\leq i_M\ \forall\,t\ge0\}
$$
is called {\em no-effort} or {\em safe} set. 

The set 
$$
\cA:=\{(s_0,i_0)\in C\ \vert\ i^{s_0,i_0,v}(t)\leq i_M\ \forall \,v\in \cV,\ \forall\,t\ge0\}
$$
is called \emph{maximal forward invariant} set.  
\end{defn}

\noindent The viable set $\cB$ is the maximal set of initial configurations on which at least one trajectory satisfies the constraint. When the epidemic initial state $(s_0,i_0)$ is inside $\cB$, the epidemic is ``under control". The safe zone $\cA_0$ is the maximal set of initial configurations such that the associated uncontrolled trajectories (i.e. considering $v\equiv 0$) satisfy the constraint.
The set $\cA$ is the maximal set of initial conditions in $C$ for which \emph{any} trajectory starting in $\cA$ stays in $C$ (and thus in $\cA$). 
These sets play an important role in the synthesis of optimal control strategies.

\begin{theorem}\label{lemma:ViabilityDelayFree}
The following propositions hold true.
\begin{enumerate}[leftmargin=*]
    \item
The maximal forward invariant set coincides with the no-effort set, i.e., $\cA=\cA_0$,  and it coincides with
\[
A:=\{(s,i)\in C\;\vert\;i\leq \Gamma_A(s)\},
\]
where 
\begin{equation}\label{eq:GammaBar}
\Gamma_A(s):=\begin{cases}
i_M,\;\;\;\;&\text{if }0\leq s\leq \frac{\gamma}{\beta},\\
\frac{\gamma}{\beta}+i_M-s+\frac{\gamma}{\beta}\log(\frac{\beta}{\gamma}s),\;\;\;&\text{if }s> \frac{\gamma}{\beta}.
\end{cases}
\end{equation}
\item The maximal viable set $\cB$ coincides with
\[
 B:=\{(s,i)\in C\;\vert\;i\leq \Gamma_B(s)\},
\]
for the function $\Gamma_B:[0,\infty)\to [0,i_M]$ given by
$$
\Gamma_B(s):=\begin{cases}
i_M&\mbox{if }0\le s\le\frac{\gamma}{\beta},\\
\varphi(s)&\mbox{if }s>\frac{\gamma}{\beta},
\end{cases}
$$
where $\varphi:(\frac{\gamma}{\beta},\infty)\to [0,i_M]$ is
implicitly defined by equation \eqref{eq_siid}  with $v=v_M$ and $(s_0,i_0)=(\frac{\gamma}{\beta},i_M)$, as $i=\varphi(s)$; in other words, for every $s>\frac{\gamma}{\beta}$, $i=\varphi(s)$ is the unique solution of the equation (in the unknown~$i$) 
\begin{equation}\label{eq_phi}
F(s,i):=s+i-\frac{\gamma}{\beta}-i_M+\frac{v_M}{\beta}\log(\frac{i}{i_M})-\frac{\gamma}{\beta}\log(\frac{\beta s}{\gamma})=0.
\end{equation}
\end{enumerate}
\end{theorem}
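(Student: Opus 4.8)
The plan is to reduce both statements to the behaviour of a single scalar ``energy''. For a constant level $v_0\in\{0,v_M\}$ I would introduce
\[
\Phi_{v_0}(s,i):=s+i+\frac{v_0}{\beta}\log i-\frac{\gamma}{\beta}\log s,
\]
which is exactly the quantity conserved along the constant control $v\equiv v_0$ in Theorem~\ref{th_euri}\emph{(4)}. Differentiating $\Phi_{v_0}$ along an arbitrary trajectory $(s,i)$ of \eqref{eq:SIR2} driven by a control $v\in\cV$ and substituting the two equations of the system yields the key identity
\[
\frac{d}{dt}\,\Phi_{v_0}\big(s(t),i(t)\big)=\big(v_0-v(t)\big)\Big(s(t)-\tfrac{\gamma}{\beta}\Big).
\]
Hence, while $s(t)>\gamma/\beta$, the map $t\mapsto\Phi_0(s,i)$ is non-increasing (take $v_0=0$, using $v\ge0$) and $t\mapsto\Phi_{v_M}(s,i)$ is non-decreasing (take $v_0=v_M$, using $v\le v_M$). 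The second preliminary observation is that, since $s$ is strictly decreasing (Theorem~\ref{th_euri}\emph{(5)}) and $i'=(\beta s-\gamma)i$, along every trajectory $i$ increases while $s>\gamma/\beta$ and decreases afterwards; thus the maximum of $i$ over $[0,\infty)$ is attained at $t=0$ when $s_0\le\gamma/\beta$, and otherwise precisely at the finite instant $t_{\gamma/\beta}$ at which $s$ reaches $\gamma/\beta$ (finiteness by Theorem~\ref{th_euri}\emph{(6)--(7)}). Consequently the whole constraint $i(\cdot)\le i_M$ is equivalent to a single inequality at that instant.

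I would then record that the graphs appearing in the statement are level sets of these energies: a short algebraic check shows, for $s>\gamma/\beta$, that $i=\Gamma(s)\iff\Phi_0(s,i)=\Phi_0(\gamma/\beta,i_M)$ and $F(s,i)=\Phi_{v_M}(s,i)-\Phi_{v_M}(\gamma/\beta,i_M)$, so that $i=\Gamma_\star(s)=\varphi(s)\iff\Phi_{v_M}(s,i)=\Phi_{v_M}(\gamma/\beta,i_M)$. Here I would also note that $\varphi$ is well defined, since $F(s,\cdot)$ is strictly increasing with $F(s,i_M)>0$ for $s>\gamma/\beta$ and $F(s,i)<0$ for $i$ small, so \eqref{eq_phi} has a unique root $\varphi(s)\in(0,i_M)$. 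Finally, because $\Phi_{v_0}(\gamma/\beta,\cdot)$ is strictly increasing in $i$, the comparison of energies at $s=\gamma/\beta$ is equivalent to comparing $i$ with $i_M$.

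For assertion \emph{(1)} the inclusion $\cA\subseteq\cA_0$ is immediate since $v\equiv0\in\cV$. To get $\cA_0=A$, fix $(s_0,i_0)\in C$: the case $s_0\le\gamma/\beta$ is trivial (then $i$ is non-increasing for $v\equiv0$, so the constraint reduces to $i_0\le i_M=\Gamma(s_0)$), while for $s_0>\gamma/\beta$ conservation of $\Phi_0$ along the uncontrolled trajectory gives $\Phi_0(\gamma/\beta,i(t_{\gamma/\beta}))=\Phi_0(s_0,i_0)$, so $i(t_{\gamma/\beta})\le i_M$ is equivalent to $\Phi_0(s_0,i_0)\le\Phi_0(\gamma/\beta,i_M)$, i.e.\ $i_0\le\Gamma(s_0)$. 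For the reverse inclusion $A=\cA_0\subseteq\cA$, take $(s_0,i_0)\in A$ with $s_0>\gamma/\beta$ and any $v\in\cV$; the non-increase of $\Phi_0$ yields $\Phi_0(\gamma/\beta,i(t_{\gamma/\beta}))\le\Phi_0(s_0,i_0)\le\Phi_0(\gamma/\beta,i_M)$, whence $i(t_{\gamma/\beta})\le i_M$ and, by the location of the maximum, $i(t)\le i_M$ for all $t$. Thus every control is viable and $\cA=\cA_0=A$.

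Assertion \emph{(2)} is the mirror argument with $v\equiv v_M$ playing the extremal role. For $s_0>\gamma/\beta$ and any control $v$, the non-decrease of $\Phi_{v_M}$ gives $\Phi_{v_M}(\gamma/\beta,i(t_{\gamma/\beta}))\ge\Phi_{v_M}(s_0,i_0)$, so the smallest possible value of $i(t_{\gamma/\beta})$ over all controls is realized by $v\equiv v_M$, for which $\Phi_{v_M}$ is conserved. Hence some control keeps $i\le i_M$ if and only if the constant control $v_M$ does, i.e.\ if and only if $\Phi_{v_M}(s_0,i_0)\le\Phi_{v_M}(\gamma/\beta,i_M)$, which is exactly $i_0\le\varphi(s_0)=\Gamma_\star(s_0)$; adding the trivial case $s_0\le\gamma/\beta$ yields $\cB=B$. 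The main obstacle is conceptual rather than computational: since distinct controls produce distinct trajectories, the $i$-profiles cannot be compared pointwise by a naive argument. The device that resolves this is precisely the monotone functional $\Phi_{v_0}$ combined with the fact that the entire constraint is governed by the single value $i(t_{\gamma/\beta})$; once the sign of $\tfrac{d}{dt}\Phi_{v_0}$ and the location of the maximum of $i$ are pinned down, the two extremal controls ($0$ for invariance, $v_M$ for viability) emerge automatically.
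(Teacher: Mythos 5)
Your proposal is correct; every step checks out, including the key identity $\frac{d}{dt}\Phi_{v_0}(s,i)=(v_0-v)(s-\tfrac{\gamma}{\beta})$, the reduction of the constraint to the single value $i(t_{\gamma/\beta})$, and the identification of $\Gamma$ and $\Gamma_\star$ as level sets of $\Phi_0$ and $\Phi_{v_M}$. The core mechanism coincides with the paper's on the hard inclusion $\cB\subseteq B$: the paper uses the integral identity \emph{(3)} of Theorem~\ref{th_euri} and bounds $\frac{1}{\beta}\int_0^t v\,\frac{i'}{i}$ by $\frac{v_M}{\beta}\log(i_M/i_0)$ before evaluating at the time $\bar t$ where $s(\bar t)=\frac{\gamma}{\beta}$ --- this is exactly the integrated form of your statement that $\Phi_{v_M}$ is non-decreasing while $s>\frac{\gamma}{\beta}$. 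Where you genuinely diverge is in the packaging of the remaining pieces. For part \emph{(1)} the paper simply cites Theorem~2.3 of the reference on non-pharmaceutical control, whereas you give a direct, self-contained proof via the monotonicity of $\Phi_0$; this is a real gain in completeness. For the inclusion $B\subseteq\cB$ the paper runs a Nagumo-type boundary argument (showing $\langle\nu,f(\cdot,\cdot,v_M)\rangle=0$ on the graph of $\varphi$, and implicitly relying on Remark~\ref{rem_cd} to handle interior initial points), while you use conservation of $\Phi_{v_M}$ along $v\equiv v_M$ together with the location of the maximum of $i$; your version avoids the boundary-only reduction and treats interior points on the same footing. The net effect is a single, uniform monotone-energy argument that proves both parts and both inclusions at once, at the cost of introducing the auxiliary functionals $\Phi_{v_0}$; the paper's route is shorter on part \emph{(2)} only because it imports part \emph{(1)} from elsewhere. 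One cosmetic remark: in the invariance step $A\subseteq\cA$ you treat only $s_0>\frac{\gamma}{\beta}$ explicitly; the case $s_0\le\frac{\gamma}{\beta}$ (where $i$ is non-increasing under every control because $s$ is decreasing) should be stated for completeness, as you did in the $\cA_0=A$ step.
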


\begin{figure}[h!]\label{et0max_fig}
\begin{center}
\includegraphics[width=0.95\textwidth]{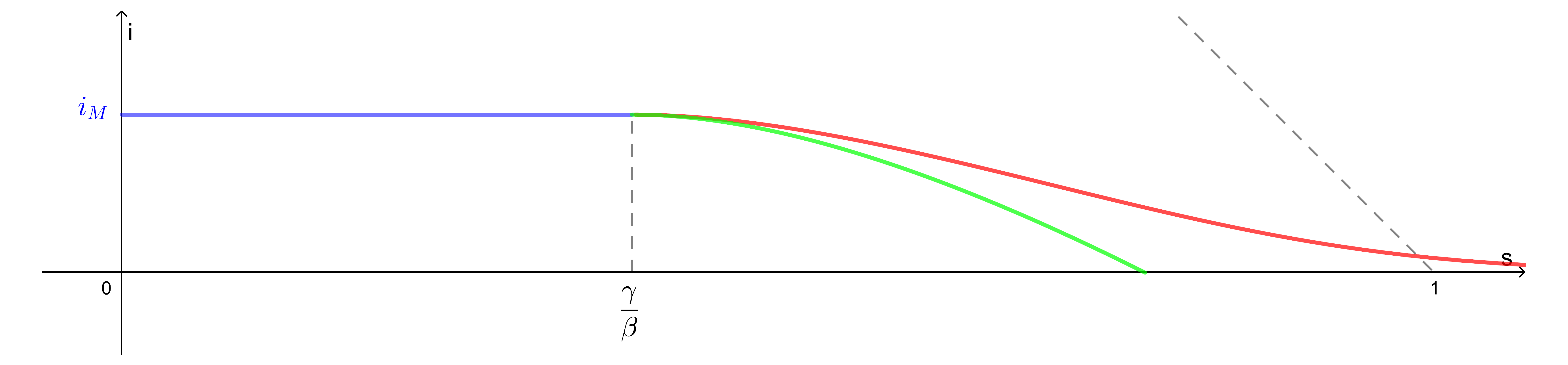}
\end{center}
\caption{The curves $\Gamma_A$ (in green) and $\Gamma_B$ (in red) for $s>\frac{\gamma}{\beta}$}
\end{figure}

\begin{remark}
The functions $\Gamma_A$ and $\Gamma_B$ are continuously differentiable (see the proof below). The set $\cA$ is convex, while $\cB$ is, in general, non-convex. 
\end{remark}


\begin{proof}
{\em(1)} {\color{black} The no-effort set $\cA_0$ is obtained by considering the uncontrolled SIR system. It has been  
characterized in~Theorem 2.3 of~\cite{AvrFre22} and coincides with $A$. It remains to prove that also  $\cA=A$.

We start by observing that, by definition, the inclusion $\cA\subseteq\cA_0=A$ holds true.

We prove the remaining opposite inclusion,
$A\subseteq \mathcal{A}$,
by showing that when the initial point is taken on the boundary $\partial A$, then any control $v\in[0,v_M]$ keeps the trajectory  inside the set $A$. 
The cases in which $(s_0,i_0)\in \partial A$ is such that $s_0=0$,  $i_0=0$ or $i_0=i_M$ are straightforward.
Therefore, we turn our attention to  the less trivial case in which
$(s_0,i_0)\in\partial A$ is such that $s_0>\frac{\gamma}{\beta}$ and $i_0=\Gamma_A(s_0)$. The exterior normal vector to $\partial A$ in $(s_0,i_0)$ is orientated as $(\nu_1,\nu_2)=(-\Gamma_A'(s_0),1)=(1-\frac{\gamma}{\beta s_0},1)$ and it can be verified that
$$
\inp{(\nu_1,\nu_2)}{f(s_0,i_0,v)}\le0\;\;\;\;\forall \,v\in [0,v_M]
$$
which means that any control $v\in[0,v_M]$ keeps the trajectory inside the set $A$.}




{\em(2)} The maximal viable set contained in $C$ is the set $\mathcal{B}$  of Definition \ref{def_viableset}.

First of all we note that 
$F\in C^1((0,\infty)^2)$
and
$$
\frac{\partial F}{\partial i}=1+\frac{v_M}{\beta i} > 0 \quad \forall\, i >0,
$$
hence, by Dini's Implicit Function Theorem, the equation $F(s,i)=0$ implicitly defines a unique function $\varphi:(\frac{\gamma}{\beta},\infty)\to\R$ such that
$F(s,\varphi(s))=0$ for every $s\in(\frac{\gamma}{\beta},\infty)$. Hence the function $\Gamma_B$ and the set $B$ are well defined. Moreover, 
$$
\frac{\partial F}{\partial s}=1-\frac{\gamma}{\beta s} > 0 \quad \forall\, s >\frac{\gamma}{\beta}.
$$
Then
$$
\varphi'(s)=-\frac{{\partial F}/{\partial s}}{{\partial F}/{\partial i}}
=-\frac{\beta s-\gamma}{s}\frac{i}{\beta i+v_M}
<0\quad 
\forall\, s >\frac{\gamma}{\beta}.
$$
Thus, $\varphi$ is strictly decreasing, positive and such that $\sup\varphi=\varphi(\frac{\gamma}{\beta}^+)=i_M$ and $\varphi'(\frac{\gamma}{\beta}^+)=0$. In particular, $\Gamma_B\in C^1([0,\infty))$.

The viability of the set $B$, that is the inclusion
$$
B\subseteq \mathcal{B},
$$
is now proved  by choosing $v\equiv v_M$ and showing that, when the initial point is taken on the boundary $\partial B$, the trajectory keeps the epidemic state inside the set $B$. 
The cases in which $(s_0,i_0)\in \partial B$ is such that $s_0=0$,  $i_0=0$ or $i_0=i_M$ are straightforward.
Let us concentrate on the less trivial case in which
$(s_0,i_0)\in\partial B$ is such that $s_0>\frac{\gamma}{\beta}$ and $i_0=\varphi(s_0)$. The exterior normal vector to $\partial B$ in $(s_0,i_0)$ is orientated as $(\nu_1,\nu_2)=(-\varphi'(s_0),1)$ and it can be verified that
$$
\inp{(\nu_1,\nu_2)}{f(s_0,i_0,v_M)}=0
$$
which means that the control $v\equiv v_M$ keeps the trajectory on the boundary and, hence, inside the set $B$.


It remains then to prove the opposite inclusion $\cB\subseteq B$.

Let us then consider $(s_0,i_0)\in C$ and $v\in \mathcal{V}$ be such  that $i^{s_0,i_0,v}(t)\leq i_M$ for all $t\in [0,\infty)$.
We have only to consider the case in which 
$s_0>\frac{\gamma}{\beta}$ since, otherwise,  $(s_0,i_0)\in\{(s,i)\in C\ :\ 0<i\le\Gamma_B(s)\}=B$ and the inclusion is proved.
We aim to prove that $i_0\le \varphi(s_0)$, that is {\color{black}$F(s_0,i_0)\le0$. The equivalence between the two conditions can, indeed,  be proved by introducing the functions $f(i):=i+\frac{v_M}{\beta}\log(\frac{i}{i_M})$  and $g(s):=-s+\frac{\gamma}{\beta}+i_M+\frac{\gamma}{\beta}\log(\frac{\beta s}{\gamma})$  and observing that, by the definition \eqref{eq_phi} of $F$, the inequality $F(s_0,i_0)\le 0$ is equivalent to $f(i_0)\le g(s_0)$. On the other hand, the function $f$ is strictly increasing, hence invertible, and we obtain the equivalent proposition $i_0\le f^{-1}(g(s_0))$. The conclusion follows then by noting that $\varphi=f^{-1}\circ g$.

In fact, we are going to prove the  inequality $F(s_0,i_0)\le0$ in the following equivalent form }
\begin{equation}\label{tfm}
s_0+i_0\le \frac{\gamma}{\beta}+i_M-\frac{v_M}{\beta}\log(\frac{i_0}{i_M})+\frac{\gamma}{\beta}\log(\frac{\beta s_0}{\gamma}).
\end{equation}
By {\color{black}{\em(3)}} of Theorem \ref{th_euri}, denoting by $i=i^{s_0,i_0,v}$ and $s=s^{s_0,i_0,v}$, we have
$$
s_0+i_0=s(t)+i(t)+\frac{1}{\beta}\int_0^t v(\xi)\frac{i'(\xi)}{i(\xi)}\,d\xi-\frac{\gamma}{\beta}\log(\frac{s(t)}{s_0})
$$
for every $t\ge0$. Since $i\le i_M$, $v\le v_M$, 
and since $\frac{i'}{i}=\beta s-\gamma>0$ as long as $s>\frac{\gamma}{\beta}$, by estimating the integral we get
\begin{equation*}
\begin{split}
s_0+i_0&\le s(t)+i_M+\frac{1}{\beta} v_M\log(\frac{i_M}{i_0})-\frac{\gamma}{\beta}\log(\frac{s(t)}{s_0})\\
\end{split}
\end{equation*}
which holds for every $t\ge0$. Since, by point {\color{black}{\em(6)}} of Theorem \ref{th_euri} and by continuity, we have that there
exists $\bar{t}\ge0$ such that  $s(\bar{t})=\frac{\gamma}{\beta}$, then the evaluation of the previous inequality in $t=\bar{t}$ provides the desired inequality \eqref{tfm}. 
\end{proof}

Summarizing, in this subsection, we have proved that \emph{feasible} control policies exist if and only if the initial state lies in the set $B$. The set $A$, instead, characterizes the set of initial conditions for which the ``no-action'' choice ($v\equiv 0$) is feasible.

\subsection{Existence of solutions for a general cost on a finite horizon}\label{sec_OCP_exist}

In this subsection we prove the existence of a solution to an optimal control problem of the form
\eqref{eq:OptimalControlProblemNoDelay} 
but with a more general cost functional.
More precisely, our problem consists in minimizing a 
{\em cost functional} $J:  \cV_T\times Y\to[0,\infty]$
of the form 
\begin{equation}\label{cost}
J(v,s,i)=\int_0^{T}f_0(t,s,i,v)\,dt,
\end{equation}
on a time horizon $I=(0,T)$, for a finite $T>0$, over all controls $v$ in the {\em space of controls} $\cV_T=L^{\infty}(I;V)$ (with $V=[0,v_M]$) and the corresponding  trajectories $(s,i)$ that are solutions, in {\em space of states} $Y=W^{1,\infty}(I;\R^2)$, to 
the {\em state equations} \eqref{eq:SIR2}  with initial conditions $(s_0,i_0)$ in the viable set $\mathcal{B}=B$ and 
fulfil the {\em state constraint}  
\begin{equation}\label{icug}
i(t)\le i_M\quad \forall \, t\in I.
\end{equation}

 The integrand 
   $f_0$ is a given {\em running cost} such that the integral makes sense. 
An {\em optimal solution} to the control problem \eqref{cost}-\eqref{icug}-\eqref{eq:SIR2} is a vector function 
$(v,s,i)\in L^\infty(I;V)\times W^{1,\infty}(I;\R^2)$ that minimizes the cost $J$ and satisfies the set of state equations and the upper bound on $i$. The function $v$ is an {\em optimal control} and $(s,i)$ an {\em optimal state or trajectory}.   We consider, for the moment,  only the case of finite final time $T>0$. 
In the subsequent Section~\ref{sec:InfiniteHorizon}, the existence of optimal controls on the infinite-horizon case will be obtained by viewing the latter as a $\Gamma$-limit of the finite-horizon problems on   $[0,T]$ as $T\to\infty$.

The following existence theorem for a very general cost functional holds.

\begin{theorem}\label{gen_ex_th}
Let $(s_0,i_0)\in B$. If $f_0:I\times\R^2\times\R\to[0,\infty)$ is a normal convex integrand, that is,  
it is measurable with respect to the Lebesgue $\sigma$-algebra on $I$ and the Borel $\sigma$-algebra on $\R^2\times\R$ 
and there exists a subset $N\subset I$ of Lebesgue measure zero such that  
\begin{enumerate}
\item $f_0(t,\cdot,\cdot,\cdot)$ is lower semicontinuous for every $t\in I\setminus
N$,
\item $f_0(t,s,i,\cdot)$ is convex for every $t\in I\setminus
N$ and $s,i\in\R$,
\end{enumerate}
then 
there exists an optimal solution $(v,s,i)$ to the control problem \eqref{cost}-\eqref{icug}-\eqref{eq:SIR2}.
\end{theorem}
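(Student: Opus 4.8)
The plan is to apply the direct method of the calculus of variations. First I would record that the admissible set is nonempty: since $(s_0,i_0)\in B=\cB$ (the two sets coincide by Theorem~\ref{lemma:ViabilityDelayFree}), Definition~\ref{def_viableset} guarantees at least one control $v\in\cV$ keeping $i\le i_M$ on $[0,\infty)$, hence a fortiori on $I$. If $\inf J=+\infty$ every admissible triple is trivially optimal, so I may assume $\inf J<\infty$ and fix a minimizing sequence $(v_n,s_n,i_n)$ of admissible triples with $J(v_n,s_n,i_n)\to\inf J$.

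Next I would extract compactness. Each $v_n$ takes values in the bounded convex set $V=[0,v_M]$, so $(v_n)$ is bounded in $L^\infty(I)$ and, up to a subsequence, $v_n\weaks v$ weakly-$*$ in $L^\infty(I)$ for some $v$; since $V$ is closed and convex, $v(t)\in V$ for a.e.\ $t$. By Theorem~\ref{th_euri} (or the forward invariance recorded in Remark~\ref{rem:Boundedness}) each trajectory $(s_n,i_n)$ stays in the compact triangle $\bT$, and reading off the right-hand side of~\eqref{eq:SIR2} shows that the derivatives $s_n',i_n'$ are uniformly bounded; thus $(s_n,i_n)$ is bounded in $W^{1,\infty}(I;\R^2)$ and equi-Lipschitz. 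By the Ascoli--Arzel\`a theorem a further subsequence converges uniformly, $s_n\to s$ and $i_n\to i$ in $C(\overline{I})$.

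I would then pass to the limit in the state equation, working with its integrated form. In the $i$-equation the right-hand side $\beta s_ni_n-\gamma i_n$ converges uniformly to $\beta si-\gamma i$, so $i$ solves the second equation with datum $i_0$. In the $s$-equation the only delicate term is $v_ns_n$: here $v_n\weaks v$ weakly while $s_n\to s$ strongly (uniformly), so the product converges, $v_ns_n\wto vs$, and passing to the limit gives $s'=-\beta si-vs$ with datum $s_0$. Hence $(s,i)=x^{s_0,i_0,v}$ is exactly the trajectory driven by $v$. The constraint survives the uniform limit, since $i_n(t)\le i_M$ for all $t$ forces $i(t)\le i_M$ for all $t$; thus $(v,s,i)$ is admissible.

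The crux is the lower semicontinuity of the cost. Since $f_0$ is a normal integrand that is convex in the control variable by hypotheses~(1)--(2), and since $(s_n,i_n)\to(s,i)$ strongly while $v_n\weaks v$ weakly, the integral functional is sequentially lower semicontinuous by the classical semicontinuity theorem for integrands convex in the ``velocity'' variable (Ioffe's theorem / the Tonelli--Serrin type result). This yields $J(v,s,i)\le\liminf_n J(v_n,s_n,i_n)=\inf J$, which together with admissibility proves that $(v,s,i)$ is optimal. I expect this semicontinuity step to be the main obstacle, as it is precisely where the convexity assumption on $f_0$ enters and where the interplay between the weak convergence of the controls and the strong convergence of the states must be handled carefully; everything else is routine compactness built on the a priori bounds from Theorem~\ref{th_euri}.
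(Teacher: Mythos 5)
Your proposal is correct and follows essentially the same route as the paper: both are the direct method, with weak* compactness of the controls, uniform (Ascoli--Arzel\`a / Rellich-type) convergence of the trajectories, closedness of the admissible set and of the constraint under these convergences, and weak* lower semicontinuity of the cost coming from the normal convex integrand hypothesis. The paper merely packages the passage to the limit in the state equation as ``closedness of $\Lambda$ with respect to weak* convergence,'' whereas you unpack it explicitly (correctly handling the product $v_ns_n$ via weak-times-strong convergence), so there is no substantive difference.
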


\noindent To prove the existence of an optimal solution we  observe that it is equivalent to prove the existence of a minimizer
of the functional $F:L^\infty(I;V)\times W^{1,\infty}(I;\R^2)\to \R$ defined by
\begin{equation}\label{minF}
F(v,s,i):=J(v,s,i)+\chi_{\Lambda}(v,s,i)+\chi_{i\le i_M}(i)
\end{equation}
where $\Lambda\subset L^\infty(I;V)\times W^{1,\infty}(I;\R^2)$ is the set of admissible pairs, that is all control-state  vectors $(v,s,i)$ that satisfy the initial value problem \eqref{eq:SIR2}, while $\chi_\Lambda $ denotes the indicator function of $\Lambda$ that takes the value $0$ on $\Lambda$ and $\infty$ otherwise; similarly, the function $\chi_{i\le i_M}(i)$ is $0$ if $i(t)\le i_M$ for every $t\in I$, and $\infty$ otherwise. 

\vspace{1ex}
 
\begin{proof}
On the domain of $F$, that is the space $L^\infty(I;V)\times W^{1,\infty}(I;\R^2)$, we consider the topology given by the product of the weak* topologies of the two spaces and aim to prove sequential lower semicontinuity and coercivity  
of the functional $F$ with respect to this topology. By the 
Direct Method of the Calculus of Variations {\color{black}(see, for instance, Buttazzo \cite[Sec.\ 1.2]{Buttazzo89})}, these properties imply the existence of a solution to the minimum problem. They are direct consequences of the  
fact that the space of controls is weakly* compact, that the assumptions on $f_0$ imply that the cost functional $J$ is weakly* lower semicontinuous (see for instance \cite[Theorem 7.5]{FL_MMCVLp} or \cite[Section 2.3]{Buttazzo89}) 
 and the fact that the sets $\Lambda$ {\color{black} and $\{i\le i_M\}$ are} closed with respect to the weak* convergence. 
{\color{black} The claimed closedness of such sets follows by the application of Rellich compactness theorem, which ensures that weakly* converging sequences in $W^{1,\infty}(I)$ are, up to subsequences, uniformly converging on $I$.} 
\end{proof}

\begin{remark}{
The requirement on $f_0=f_0(t,s,i,v)$ to be a normal convex integrand is satisfied, in particular, if it is a piecewise continuous function of $t$, continuous in $(s,i)$ and convex in $v$. 
}
\end{remark}

\begin{cor}\label{cor_ocfc} Consider $(s_0,i_0)\in B$ and any finite $T>0$. 
The optimal control problem 
\eqref{eq:OptimalControlProblemNoDelay} admits an optimal solution with a finite cost. 
\end{cor}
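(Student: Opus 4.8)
The plan is to recognize Corollary~\ref{cor_ocfc} as a direct specialization of the general existence Theorem~\ref{gen_ex_th} to the particular running cost of problem~\eqref{eq:OptimalControlProblemNoDelay}, supplemented by an elementary a priori bound that yields finiteness of the optimal value. Accordingly, I would set $f_0(t,s,i,v):=\lambda_v v+\lambda_i i$, so that $J(v,s,i)=\int_0^T f_0(t,s,i,v)\,dt$ is exactly the functional~\eqref{cost}. The first task is to check that this $f_0$ meets the hypotheses of Theorem~\ref{gen_ex_th}. Since $f_0$ is affine in $(s,i,v)$, it is continuous in $(s,i)$ (hence lower semicontinuous), independent of $t$ (hence trivially piecewise continuous and jointly measurable), and convex in $v$; thus the sufficient condition recorded in the Remark following Theorem~\ref{gen_ex_th} applies verbatim.

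The one point requiring a word of care is the sign condition $f_0\ge 0$ demanded in the statement of Theorem~\ref{gen_ex_th}, as $\lambda_v v+\lambda_i i$ can be negative off the admissible region. I would dispose of this by replacing $f_0$ with $\widetilde f_0(t,s,i,v):=\lambda_v v^+ + \lambda_i i^+$, where $x^+:=\max\{x,0\}$: this modified integrand is nonnegative, still continuous in $(s,i)$ and convex in $v$, and it coincides with $f_0$ on the admissible region, because every admissible pair satisfies $v(t)\in[0,v_M]$ and $i(t)>0$ (the latter by assertion~\emph{(1)} of Theorem~\ref{th_euri}). Hence $J$ is unchanged on the admissible set, and Theorem~\ref{gen_ex_th} produces an optimal solution $(v,s,i)$. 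This reconciliation of signs is, in fact, the only genuine subtlety; everything else is bookkeeping on top of the already-proved existence and viability results.

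It remains to certify that the optimal cost is finite, which amounts to exhibiting at least one admissible competitor with finite cost together with the obvious lower bound $J\ge 0$. Since $(s_0,i_0)\in B=\mathcal{B}$ by Theorem~\ref{lemma:ViabilityDelayFree}, the very definition of the viable set $\mathcal{B}$ (Definition~\ref{def_viableset}) furnishes a control $v\in\cV$ for which $i^{s_0,i_0,v}(t)\le i_M$ on all of $[0,\infty)$, and in particular on $[0,T]$; restricted to $I=(0,T)$ this gives an admissible triple. For this, indeed for any admissible, triple I would bound
$$
0\le J(v,s,i)=\int_0^T \big(\lambda_v v(t)+\lambda_i i(t)\big)\,dt\le \big(\lambda_v v_M+\lambda_i\big)T<\infty,
$$
using $v\le v_M$, $i\le i_M\le 1$ (or merely $i\le 1$ by forward invariance, Remark~\ref{rem:Boundedness}), and the finiteness of $T$. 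Consequently the infimum of $J$ over admissible triples is finite, and the minimizer provided by Theorem~\ref{gen_ex_th} attains a finite value, establishing the corollary.
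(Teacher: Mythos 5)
Your proposal is correct and follows essentially the same route as the paper: apply Theorem~\ref{gen_ex_th} with $f_0=\lambda_v v+\lambda_i i$ and deduce finiteness of the optimal cost from the viability of $(s_0,i_0)\in B$ together with the boundedness of the integrand on the finite interval. The only addition is your positive-part modification to enforce $f_0\ge 0$ off the admissible region, a minor technicality the paper passes over silently; it does not change the argument.
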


\begin{proof}
     The existence is a straightforward consequence of Theorem \ref{gen_ex_th} with $f_0(t,v,s,i)=\lambda_i i+\lambda_vv$.  The finiteness of the cost of the optimal solution $(v,s,i)$ (and, hence, of any optimal solution) follows by the fact that  $(s_0,i_0)\in B$  implies that $F(v,s,i)=\int_0^{T}\lambda_v v(t)+\lambda_i i(t)  \,dt$, which is finite. 
     \end{proof}

\section{Optimality via Pontryagin Principle on a finite horizon}\label{sec:OptimalityPontryagin}

The aim of this section is to characterize the optimal controls for~\eqref{eq:OptimalControlProblemNoDelay} via Pontryagin principle in the case $T<\infty$.

\subsection{Necessary optimality conditions.}

We first introduce the Hamiltonian $H:\R^2\times V\times\R^3\to \R$ by setting
\[
\begin{aligned}
H(s,i,v,p_0, p_s,p_i)&=p_0\lambda_v v+p_0\lambda_i i-p_s\beta si-p_svs+p_i\beta si-\gamma p_ii.
\end{aligned}
\]
 Given $T\in(0,\infty)$, by $BV([0,T])$ we denote 
the space of functions with bounded variation, defined as  
\begin{equation*}
BV([0,T]):=\{h\in L^{1}_{\rm loc}(\R)\ \vert\ h'\in M(\R),\, {\rm supp}(h')\subseteq [0,T]\},
	\end{equation*}
	where $h'$ is the distributional derivative of $h$, and $M(\R)$ stands for the set of bounded Borel measures on $\R$. Also, the space of functions with bounded variation is given by extending functions in a constant way on $\R$ (see for instance~\cite[Section 2.2.]{BdlVD2013}). More precisely, for $h \in BV([0,T])$, there exist $h_{0^-},\,h_{T^+} \in \mathbb{R}$  such that
	\begin{equation*}
		\begin{split}
			&h=h_{0^-} \quad a.e. \enspace on\enspace (-\infty,0), \\
			&h=h_{T^+} \quad a.e. \enspace on\enspace (T,\infty).
		\end{split}
	\end{equation*}
 
It is, moreover, well known that each $BV$ function $h$ admits finite left and right limits in every point $t$ of the domain, that will be denoted by $h(t^-)$ and $h(t^+)$, respectively. {\color{black} 
This allows one to define the \emph{jump of} $h$ at any  point $t_0$ as $[h(t_0)]:=h(t_0^+)-h(t_0^-)$.} 
Moreover, there exist
unique left- and right-continuous representatives. For the sake of simplicity, in the sequel we tacitly assume to  always consider the left-continuous representative. As a consequence, for every $h\in BV$ we have $h(t)=h(t^-)$.

\begin{lemma}[see \cite{BdlV2010} Theorem 1,  and~\cite{Vinter2010Book} Theorem 9.3.1]\label{lemma:PontryaginNODelay} Let $(s_0,i_0)\in B$. 
Suppose $\hs,\,\hi,\,\hv:[0,T]\to \R$ is a feasible triplet and the control $\hv$ is an optimum for the optimal control problem $\cP^T_{\lambda_v,\lambda_i,i_M}$ in~\eqref{eq:OptimalControlProblemNoDelay}. Then there exist $p_0\in \{0,1\}$, $k
\in \R$, $\hp_s\in W^{1,1}([0,T])$, $\hp_i\in BV([0,T])$ and 
 a positive, bounded,  Borel measure $d\mu$ on $[0,T]$  
such that
\begin{enumerate}[label=(\Alph*)]
\item \emph{(adjoint equations)}
\[
\begin{aligned}
\begin{cases}
\hp'_s=-\beta\, \hi(\hp_i-\hp_s)+\hv\hp_s,\\
\hp'_i=-p_0\lambda_i+\gamma \hp_i-\beta\, \hs\,(\hp_i-\hp_s)-d\mu,
\end{cases}\\
\end{aligned}
\]
\item \emph{(non-degeneration)}
\[
p_0+d\mu([0,T])>0,
\]
\item \emph{(complementarity)}
\[
\int_{[0,{T}]} (\hi(t)-i_M) d\mu(t)=0,
\]
\item \emph{(transversality)}
\[
\begin{aligned}
\hp_s(T)=0 \;\;\text{ and }\;\;
\hp_i(T^+)=0,
\end{aligned}
\]
\item \emph{(Weierstrass condition)} 
\[
 \hv(t)\in \arg\min_{v\in V} \left(p_0\lambda_v-\hp_s(t)\hs(t)\right)v,
\]
implying that, setting 
\begin{equation}\label{def_phi}
\varphi(t):=p_0\lambda_v-\hp_s(t)\hs(t),
\end{equation}
we have
\begin{equation}\label{eq:sfv}
v(t)=\begin{cases}
v_M &\text{if }\varphi(t)<0,\\
0 &\text{if }\varphi(t)>0,
\end{cases}
\end{equation}
for almost every $t\in [0,T]$.  The function $\varphi:[0,T]\to \R$ is called {\em switching function},
\item \emph{(constancy of the Hamiltonian)}
\[
\begin{aligned}
\widehat H(t)&:=H(\hs(t),\hi(t),\hv(t),\hp_s(t),\hp_i(t))\\&=p_0\lambda_i \hi(t)+p_0\lambda_v\hv(t)-\beta\hp_s(t) \hs(t)\hi(t)-\hp_s(t)\hs(t)\hv(t)+\beta \hp_i(t)\hs(t)\hi(t)-\gamma \hp_i(t)\hi(t)\\&\equiv k\quad\mbox{a.e. in }\, [0,T].
\end{aligned}
\] 
\end{enumerate}
\end{lemma}

 In order to simplify the analysis, from now on we work under the assumption that the final time $T$ be large enough to ensure that herd immunity is reached.  

\begin{assumption}\label{assum:UnderHerdFInal} For the considered initial condition $(s_0,i_0)\in B$ in problem~\eqref{eq:OptimalControlProblemNoDelay}, the final time $T>0$ satisfies 
\begin{equation}\label{eq:DefinitionT0}
T> \bar{t}(s_0,i_0):=\sup_{v\in\cV}\inf_{t\ge0}\{s^{s_0,i_0,v}(t)\le\frac{\gamma}{\beta}\}.
\end{equation}
\end{assumption}

\begin{remark}\label{rem_stgb}
The assumption requires that, for any control $v\in \cV$, the corresponding solution satisfies $s^{s_0,i_0,v}(T)<\frac{\gamma}{\beta}$, or, in other words, the herd-immunity threshold is  reached (in a strict sense) at the final time $T$, for any control $v\in \cV$.
In view of  \emph{(7)} of Theorem~\ref{th_euri}, Assumption~\ref{assum:UnderHerdFInal} is satisfied if $T>\frac{1}{\gamma i_0}(s_0-\frac{\gamma}{\beta})^+$.
\end{remark}



\subsection{Characterization of the optimal controls}\label{subsec_charOC}

In the sequel we see how, under Assumption~\ref{assum:UnderHerdFInal}, the necessary conditions in Lemma~\ref{lemma:PontryaginNODelay} allow us to provide a qualitative description of an optimal control, for any initial condition.

{\color{black} In all statements of the present subsection, and unless explicitly stated otherwise, we assume that $(s_0,i_0)\in B$,  that  $\hs,\,\hi,\,\hv:[0,T]\to \R$ be a feasible triplet and $\hv$ be an optimal control for  problem $\cP^T_{\lambda_v,\lambda_i,i_M}$ in~\eqref{eq:OptimalControlProblemNoDelay}. 
Let us, moreover, suppose that  Assumption~\ref{assum:UnderHerdFInal} holds true and that  $p_0\in \{0,1\}$,  $\hp_s\in W^{1,1}([0,T])$, $\hp_i\in BV([0,T])$ and $d\mu$  are those provided in the statement of Lemma~\ref{lemma:PontryaginNODelay}.

}

We start with a lemma characterizing the structure of the measure $d\mu$.

\begin{lemma}[{\color{black}structure of $d\mu$}]\label{Lemma:Technical}
The following propositions hold.
\begin{enumerate}[leftmargin=*]
\item 
Either one of the following alternative conditions hold:
\begin{enumerate}
\item $d\mu= 0$, implying $p_0=1$,
\item there exists $a>0$ such that $d\mu=a\delta_0$, implying $\hs(0)=s_0\leq \frac{\gamma}{\beta}$ and $\hi(0)=i_0=i_M$,
\item there exist $a>0$ and $t_0\in (0,T)$ such that 
$d\mu=a\delta_{t_0}$ and, moreover, $\hs(t_0)=\frac{\gamma}{\beta}$ and $\hi(t_0)=i_M$.
\end{enumerate}
\item {\color{black} For every $t\in[0,T]$ we have $[\hp_i(t)]=-d\mu(\{t\})\leq 0$; in particular,}   $d\mu(\{T\})=\hp_i(T)=0$.
\end{enumerate}
\end{lemma}

\begin{remark}\label{mu0n} A straightforward consequence of {\color{black} {\it (1)} of} Lemma \ref{Lemma:Technical}  is that $d\mu=0$ in a neighborhood of $T$; in other words, $\text{supp}(d\mu)\subset [0,T)$.  
\end{remark}

\begin{proof}
Let us prove ~\emph{(1)}. In the case $d\mu= 0$, by the non-degeneration property we have $p_0=1$.\\
Suppose then we are not in case \emph{(a)}, i.e., $d\mu \ne 0$.
Therefore, and by complementarity, we have $\varnothing\ne\text{supp}(d\mu)\subseteq \{t\in [0,T]\;\vert\;\hi(t)=i_M\}=: S_{i_M}$.

Let us preliminarily  note that $T\notin S_{i_M}$. Indeed, suppose by contradiction $\hi(T)= i_M$;  since by Assumption~\ref{assum:UnderHerdFInal} we have $\hs(T)<\frac{\gamma}{\beta}$, in this case we would have
\[
\hi'(T)=\beta \hs(T)i_M-\gamma i_M<0,
\]
which implies that there exists $\varepsilon>0$ such that $\hi(t)>i_M$ for all $t\in (T-\varepsilon, T)$, contradicting the feasibility of $(\hs,\hi,\hv)$. 

In the case $0\in S_{i_M}$, i.e., $\hi(0)=i_0=i_M$, since $(s_0,i_0)\in B$, we must have $\hs(0)=s_0\leq \frac{\gamma}{\beta}$ and thus $\hi$ is strictly decreasing. This implies that $0$ is the only instant in which $\hi(t)=i_M$ and thus we are in case~\emph{(b)}, by complementarity.

 It then remains only the case in which there exists $t_0\in (0,T)$ such that $t_0\in S_{i_M}$, i.e.,  $\hi(t_0)=i_M$. If this is the case, $t_0$ is an interior  global maximum point  of $\hi:[0,T]\to \R$ and thus $\hi'(t_0)=0$ ($\hi$ being differentiable by {\em(2)} of Theorem \ref{th_euri}), implying $\hs(t_0)=\frac{\gamma}{\beta}$. Since $\hs$ is strictly decreasing, this proves that $S_{i_M}=\{t_0\}$ and we are thus in case \emph{(c)}, by complementarity.

{\color{black}
The first part of assertion {\em(2)} is an immediate consequence of the co-state equations and the positivity of the measure $d\mu$.
 To prove the last part it is enough to  note that $T\notin \text{supp}(d\mu)$ and   $\hp_i(T^+)=0$ (see also Remark \ref{mu0n}).
}
\end{proof}

{\color{black}
A crucial role in the characterization of the optimal controls will be played by the following  auxiliary adjoint variable
$$
\eta:=\hp_i-\hp_s.
$$}

\begin{lemma}\label{lem_eta}
Under the additional assumption $(\lambda_i,\lambda_v)\neq (0,0)$,  
the following propositions hold. 
\vspace{-2ex}
\begin{enumerate}[leftmargin=*]
\item $\hp_s$ and $\varphi$ are absolutely continuous in $[0,T]$, while  $\hp_i$ and  $\eta$ are piecewise absolutely continuous with at most one discontinuity point $t_0\in[0,T)$. 
\item $\widehat H(t)=k:=p_0\lambda_i \hi(T)\ \text{a.e. on } \,[0,T]$.
\item If $p_0=0$,  then a discontinuity  point $t_0\in[0,T)$ as in {\it (1)} exists 
and $v=v_M$ a.e.\ in $[0,t_0)$.
 \item {\color{black}Let $p_0=1$. If there exists $\widetilde t\in(0,T]$ such that $\eta(\wtt)>0$,   then $\eta>0$ in $[0,\wtt)$. } 
\item If $p_0=1$, there exists at most one $\widehat t\in [0,T]$ such that $\varphi(\widehat t\,)=0$ and, in such case, $\varphi(t)<0$ for all $t\in [0, \widehat t\,)$ and $\varphi(t)>0$ for all $t\in (\widehat t, T]$. Moreover, $\widehat t=T$ if and only if $\lambda_v=0$.
\end{enumerate} 
\end{lemma}

\begin{proof}
Assertion {\em(1)} is an immediate consequence of the co-state equations, the definition of $\varphi$ and the fact that, by Lemma~\ref{Lemma:Technical}, the measure $d\mu$ can concentrate in at most one point.

    Let us prove~\emph{(2)}. 
    {\color{black} By {\it(1)} and since $p_s(T)=0$ by \emph{(D)} of Lemma~\ref{lemma:PontryaginNODelay}, all functions appearing in the expression of the Hamiltonian, with the exception of the term $p_0\lambda_vv$, are continuous in $T$. Then, when the aforementioned term is zero, that is, }
    if $p_0=0$ or $\lambda_v=0$,
    by evaluating the Hamiltonian in $T$ and using 
{\color{black}$(D)$ and of Lemma \ref{lemma:PontryaginNODelay},  and} {\em(2)} of Lemma~\ref{Lemma:Technical},  we have $\widehat H(T)=p_0\lambda_i \hi(T)$.  The conclusion, in this case, follows then by $(F)$  of Lemma~\ref{lemma:PontryaginNODelay}. 
  Otherwise, we have $\varphi(T)=\lambda_v-\hp_s(T)\hs(T)=\lambda_v>0$ which, by continuity, holds also in a left neighborhood $(T-\varepsilon,T]$, $\varepsilon>0$. Thus, by property $(E)$ in Lemma~\ref{lemma:PontryaginNODelay}, we can take a representative $\hv=0$  in $(T-\varepsilon,T]$ so obtaining
 $$
\widehat H(t)=\lambda_i \hi(t)-\beta\hp_s(t) \hs(t)\hi(t)+\beta \hp_i(t)\hs(t)\hi(t)-\gamma \hp_i(t)\hi(t),
$$
and the claim follows by evaluating in $t=T$ and using $(F)$ of Lemma~\ref{lemma:PontryaginNODelay}.

Let us prove \emph{(3)}. Being $p_0=0$, by the non-degeneration property {\em(B)} of Lemma~\ref{lemma:PontryaginNODelay} we have $d\mu \ne 0$.  Thus, {\color{black} by {\it(1)} of Lemma \ref{Lemma:Technical},} a discontinuity point $t_0\in [0,T)$  exists {\color{black}and is unique.} In proving that $v=v_M$ a.e.\ in  $[0,t_0)$, we consider the only non-trivial case $t_0>0$, implying that we are in case~\emph{(1)(c)} of Lemma~\ref{Lemma:Technical}, {\color{black} that is,}  $t_0\in (0,T)$ and there exist $a>0$ and such that $d\mu=a\delta_{t_0}$. By linearity of the adjoint equations and since we have final homogeneous conditions, then  $\hp_s(t)=\hp_i(t)=0$ for all $t\in (t_0,T]$. This implies $\varphi(t)=0$ for all $t\in [t_0,T]$ {\color{black}(see \eqref{def_phi})}. 
Moreover, {\color{black} by {\it(2)} of Lemma \ref{Lemma:Technical}, we have  $[\hp_i(t_0)]=-d\mu(\{t_0\})=-a$ and, hence} $\hp_i(t_0^-)=a>0$.

It is easy to check {\color{black}that}
\begin{equation}\label{eq_phipBV}
\varphi' =\beta\, \hi \hs\hp_i\in BV([0,T])
\end{equation}
and $\varphi'(t_0^-)= \beta\, \hi(t_0^-) \hs(t_0^-)\hp_i(t_0^-)=
 \beta\, \hi(t_0^-) \hs(t_0^-)a>0
$. 
Since $\varphi(t_0)=0$, this implies that there exists $\varepsilon>0$ such that $\varphi(t)<0$ for all $t\in (t_0-\varepsilon, t_0)$. 
The thesis will be proved by showing that $\varphi(t)<0$ for all $t\in [0,t_0)$,  {\color{black} and using \eqref{eq:sfv} to conclude}.

Suppose, by contradiction, that there exists $t\in[0,t_0)$ such that $\varphi(t)\geq 0$ and set
\[
t_1=\sup\{t\in [0,t_0)\;\vert\; \varphi(t)\geq0\}.
\]
Since we have proved that $\varphi(t)<0$ for all $t\in (t_0-\varepsilon,t_0)$ we have $t_1\in[0,t_0)$ and $\varphi(t_1)=0$,  by continuity. Moreover, recalling that $\varphi(t_1)=-\hp_s(t_1)\hs(t_1)$, this implies $\hp_s(t_1)=0$.
We now aim to prove that $\hp_i(t_1)>0$. The case $\hp_i(t_1)=0$ is excluded since we would have $\hp_s(t_1)=\hp_i(t_1)=0$  and thus by linearity of the adjoint equations, this would imply $\hp_i(t)=0$ for all $t\in [t_1,t_0)$, contradicting $\hp_i(t_0^-)=a>0$. Then, let us suppose by contradiction that $\hp_i(t_1)<0$. In this case, since $\varphi(t_1)=0$ the Hamiltonian is continuous at $t_1$ and reads
\begin{equation}\label{eq:HamiltonianinT2}
0=\widehat H(t_1)=\beta\eta(t_1) \hs(t_1)\hi(t_1)-\gamma \hp_i(t_1)\hi(t_1)=(\beta \hs(t_1)-\gamma) \hp_i(t_1)\hi(t_1),
\end{equation}
because  $\eta(t_1)=\hp_i(t_1)-\hp_s(t_1)=\hp_i(t_1)$. Now, since $\hs(t_0)=\frac{\gamma}{\beta}$ (recall that we are in case {\em(1)(c)} of Lemma \ref{Lemma:Technical}) $\hs$ is strictly decreasing (see~Theorem~\ref{th_euri}), 
 and $t_1<t_0$, we have $\hs(t_1)>\frac{\gamma}{\beta}$.
Since we are supposing $\hp_1(t_1)<0$, equality~\eqref{eq:HamiltonianinT2} implies $\widehat H(t_1)<0$, and thus leads to a contradiction. We have thus proved that $\hp_i(t_1)>0$.

Now, {\color{black}recalling~\eqref{eq_phipBV}} and since $\varphi(t_1)=0$, we have
\[
\varphi'(t_1)=  \beta\hs(t_1)\hi(t_1)\hp_i(t_1)>0
\]
 which contradicts  $\varphi(t)<0$ in $(t_1,t_0)$. We thus have proved that $\varphi(t)<0$ for all $t\in [0,t_0)$ and then $\hv(t)=v_M$ for almost all $t\in [0,t_0)$.

Let us prove assertion {\it(4)}. 
We first observe that, with the introduced notation, we can more compactly write
\begin{equation}\label{eq:HamiltonianNoDelay}
\widehat H(t)=p_0\lambda_i \hi(t)+\varphi(t)\hv(t)+\beta\eta(t) \hs(t)\hi(t)-\gamma \hp_i(t)\hi(t).
\end{equation}
{\color{black}Let us, moreover, introduce the function
\begin{equation}\label{eq:DerofEta}
g(t):=-\lambda_i+\gamma \hp_i(t)+\beta( \hi(t)- \hs(t))\,\eta(t)- \hv(t)\hp_s(t)
\end{equation}
that is the absolutely continuous part of the derivative $\eta'=p_i'-p_s'$ with respect to the Lebesgue measure.}

{\color{black}
By contradiction, let us suppose that there exists $t\in [0,\wtt)$ such that $\eta(t)\leq0$.  Let us define
\begin{equation}\label{eq_deft1}
t_1:=\sup\{t\in [0,\wtt)\;\vert\;\eta(t)\leq0\}.
\end{equation}
By the assumption $\eta(\wtt)>0$, and since we are considering a left continuous representative,  $\eta$ is positive in a left neighborhood of $\wtt$. Thus, $t_1<\wtt$ and $\eta(t_1)\le0$ by left continuity. 
{\color{black}By {\it(2)} of Lemma \ref{Lemma:Technical} and the continuity of $p_s$, we have  $[\eta(t_1)]=[p_i(t_1)]=-d\mu(\{t_1\})\leq 0$.} Moreover, we note that the case $[\eta(t_1)]<0$ is excluded. Indeed, otherwise, we would have $\eta(t_1^+)<0$, which would imply $\eta(t_1)<0$ in a right neighborhood of $t_1$, against the definition of $t_1$.  }

 Thus, we have that $[\eta(t_1)]=0$.  Since the jump can occur in just one point,  this implies that $p_i$ and  $\eta$ are continuous in a small neighborhood of $t_1$ and, therefore, we have $\eta(t_1)=0$. 
 Noting that \eqref{eq:sfv} implies $\varphi\hv\le0$ almost everywhere and looking at the expression \eqref{eq:HamiltonianNoDelay} of the Hamiltonian \textcolor{black}{and using \emph{(1)} we have
 $$
 \lambda_i i(T)=\widehat{H}(t)\le
 \big(\lambda_i-\gamma \hp_i(t)\big)\hi(t){\color{black}+}\beta\eta(t) \hs(t)\hi(t)\quad\mbox{for a.a. }t\in[0,T].
 $$
 Since both sides of the inequality are continuous in $t_1$ and $\eta(t_1)=0$, then we get
\begin{equation}\label{litfle}
 \lambda_i i(T)\le
 \big(\lambda_i-\gamma \hp_i(t_1)\big)\hi(t_1).
\end{equation}
On the other hand, we can prove that
\begin{equation}\label{eq:l1gpi}
\lambda_i -\gamma \hp_i(t_1)\le0.
\end{equation}
Indeed, let us note that such term appears with the minus sign in the expression \eqref{eq:DerofEta} of $g$ and 
suppose by contradiction that 
$
0>-\lambda_i +\gamma \hp_i(t_1)$.
Since $\eta(t_1)=0$ and continuous in a neighborhood of  $t_1$ together with $p_i$,  there exists a  right neighborhood $I$ of $t_1$ such that 
\[
0>-\lambda_i +\gamma \hp_i(t)+\beta( \hi(t)- \hs(t))\,\eta(t),\;\forall \,t\in I.
\]
Moreover there exists a right neighborhood $J\subseteq I$ of $t_1$  such that 
\[
\begin{aligned}
0&>-\lambda_i +\gamma \hp_i(t)+\beta( \hi(t)- \hs(t))\,\eta(t)\\&\geq -\lambda_i +\gamma \hp_i(t)+\beta( \hi(t)- \hs(t))\,\eta(t)-{\color{black}\hv(t)\hp_s(t)}=g(t)\;\text{ for a.a.} \;t\in J,
\end{aligned}
\]
where the last inequality is trivial if {\color{black}$\hp_s(t_1)\geq0$}, and it holds if {\color{black}$\hp_s(t_1)< 0$} since in this case we have $\varphi(t_1)>0$ and thus $\hv(t)=0$ almost everywhere in a small-enough right neighborhood $J$, because $p_s$, and hence $\varphi$, are continuous.
Now, considering  $t\in J$,  we have
\[
\eta(t)=\int_{t_1}^tg(s)\;ds<0,
\]
contradicting the definition of $t_1$. We have thus proved~\eqref{eq:l1gpi}.  Let us continue by distinguish two cases. 
\begin{itemize}[leftmargin=0cm]
\item[\empty] {\em Case $\lambda_i>0$.}
Using~\eqref{eq:l1gpi} in \eqref{litfle}   would contradict $\lambda_ii(T)>0$; therefore, we have  proved that $\eta(t)>0$ for all $t\in [0,\wtt)$. \vspace{2ex}
\item[\empty] {\em Case $\lambda_i=0$.} 
 In this case, inequality \eqref{eq:l1gpi}  reads $-\gamma \hp_i(t_1)\leq 0$, implying $\hp_i(t_1)\geq 0$.
Moreover, multiplying by $\hi(t_1)$ this leads to $-\gamma \hp_i(t_1)\hi(t_1)\leq 0$. Recalling expression~\eqref{eq:HamiltonianNoDelay} and the fact that $\varphi \hv\leq 0$ a.e. in $[0,T]$, there exists a  right neighborhood $I_1$ of $t_1$ such that 
\[
0=\widehat H(t)\leq-\gamma \hp_i(t)\hi(t)\; \text{ a.e. in } I_1.
\]
By continuity of $\hp_i$ in $t_1$, this implies $\hp_i(t_1)\leq 0$, and thus $\hp_i(t_1)=0$. Since $\eta(t_1)=0$ this also  implies $\hp_s(t_1)=0$.
Since we have proved that $[\eta(t_1)]=[p_i(t_1)]=-d\mu(\{t_1\})=0$ there exists a right neighborhood $J_1\subseteq I_1$ of $t_1$, such that $J_1\cap \text{supp}(d\mu)=\varnothing$. Thus, since  $\hp_s(t_1)=\hp_i(t_1)=0$ and by linearity of the adjoint equations in the case $\lambda_i=0$, we have that $\hp_s(t)=\hp_i(t)=\eta(t)=0$ for all $t\in J_1$ contradicting the definition of $t_1$. Therefore, also in this case, it holds that $\eta(t)>0$ for all $t\in [0,\wtt)$.
\end{itemize}
}

Also in proving~\emph{(5)} we argue by distinguish the cases $\lambda_i>0$ and $\lambda_i=0$.
\begin{itemize}[leftmargin=0cm]
\item[\empty]{\em Case $\lambda_i>0$.}
By Lemma~\ref{Lemma:Technical} {\color{black} and {\it(D)} of Lemma~\ref{lemma:PontryaginNODelay}} we have $\hp_s(T)=\hp_i(T)=\eta(T)=0$ and, 
by   {\em(2)}, the functions  $\eta$,  $\hp_s$ and $\hp_i$ are absolutely continuous in a left neighborhood of $T$.
Moreover, since $\hv$ is bounded, there exists $\varepsilon>0$ such that $\eta'=g<0$  almost everywhere in  $(T-\varepsilon, T]$. Then, for every $t\in (T-\varepsilon, T)$ we have  $\eta(t)=-\int_t^{T}g(s)\;ds$,  which implies  $\eta(t)>0$.
{\color{black}By assertion {\it(3)} we have that $\eta>0$ in $[0,T)$.\\
To conclude the proof of {\it(5)} in the case $\lambda_i>0$,}  let us note that  
\begin{equation}\label{eq:DerivativeVarPhi}
\begin{aligned}
\varphi'&
= -\beta \hi\varphi+\beta\hi\hs\eta+\lambda_v\beta \hi. 
\end{aligned}
\end{equation}
Since $\varphi'$ does not explicitly depend on $\hv$, we have $\varphi'\in BV([0,T])$.
\begin{itemize}[leftmargin=0cm]
\item[\empty]{\em Subcase $\lambda_v>0$.}
Let us suppose first that $\lambda_v>0$, and thus $\varphi(T)=\lambda_v>0$.
Since  $\eta>0$ in  $[0,T)$, equation~\eqref{eq:DerivativeVarPhi}  implies that if there exists $\widehat t\in [0,T)$ such that $\varphi(\widehat t\,)=0$, then   $\varphi'(\widehat t^-\,)>0$ and $\varphi'(\widehat t^+\,)>0$. Thus, there exists $\delta>0$ such that $\varphi(t)<0$ for all $t\in (\widehat t-\delta, \widehat t)$ and $\varphi(t)>0$ for all $t\in (\widehat t,\widehat t+\delta)$ and, since $\varphi$ is continuous,  this implies that  $\widehat t$ is unique. In particular, if such $\widehat t\in [0,T)$ exists, we necessarily have $\varphi(t)<0$ for all $t\in [0,\widehat t\,)$ and $\varphi(t)>0$ for all $t\in (\widehat t, T]$, as claimed.
\item[\empty]{\em Subcase $\lambda_v=0$.} Supposing now that $\lambda_v=0$, we have $\varphi(T)=0$ and,  simplifying~\eqref{eq:DerivativeVarPhi},  it holds that
\begin{equation}\label{Eq:derivativePhiLambdaVZero}
\varphi'
= \beta\hi\hs\hp_i. 
\end{equation}
By continuity of $\hp_i$  in a left neighborhood of $T$ (see {\it(1)}), and since $\hp_i(T)=0$ and $\hp'_i(T)=-\lambda_i<0$ we have that there exists $\delta>0$ such that $\hp_i(t)>0$ for all $t\in (T-\delta, T)$. This in turns implies that $\varphi(t)<0$ for all $t\in (T-\delta, T)$, by~\eqref{Eq:derivativePhiLambdaVZero}. Then, the claim (i.e., $\varphi(t)<0$ for all $t\in [0,T)$) follows again from~\eqref{eq:DerivativeVarPhi} and the fact that $\eta>0$ in  $[0,T)$. Note that we have also proved that $\widehat t\,=T$ if and only if $\lambda_v=0$. 
\end{itemize}

\vspace{2ex}

\item[\empty]{\em Case $\lambda_i=0$}. In this case, $\lambda_v>0$, since we are supposing that $(\lambda_i,\lambda_v)\neq (0,0)$. {\color{black} Let us, now, distinguish the cases {\it(a)}, {\it(b)} and {\it(c)} of point {\it(1)} of Lemma \ref{Lemma:Technical}.} 

If $d\mu$ is of the form described in~{\it(a)} {\color{black}or} {\it(b)} of the mentioned lemma, the unique solution satisfying the adjoint equations in Lemma~\ref{lemma:PontryaginNODelay} is $\hp_s\equiv \hp_i\equiv0$. This implies $\varphi\equiv \lambda_v>0$  
{\color{black}and the claim {\it(5)} is proved in this case.} 

If we are in case~{\it(c)}, then there exists $t_0\in (0,T)$ such that $d\mu=a\delta_{t_0}$ with $a>0$. Arguing as in the previous case,  we have that  $\hp_s(t)=0$, $\hp_i(t)=0$,  for all $t\in (t_0,T]$.  This also implies $\varphi(t)=\lambda_v>0$ for all $t\in (t_0,T]$.
  {\color{black}  Moreover, by {\it(2)} of Lemma \ref{Lemma:Technical},} we have $[\eta(t_0)]=[\hp_i(t_0)]=-d\mu(\{t_0\})=-a< 0$ and thus there exists  $\varepsilon>0$ such that $\eta(t)>0$ for all $t\in (t_0-\varepsilon, t_0)$. 
{\color{black}
By assertion {\it(3)}, we have 
that $\eta(t)>0$ for all $t\in [0,t_0)$.}
 %
 {\color{black}
 Then, we can proceed exactly in the previous case (subcase $\lambda_v>0$) to prove that $\varphi(t)=0$ in at most one instant of time in $[0,t_0)$, and the conclusion follows by recalling that we have already proved that $\varphi\equiv \lambda_v>0$ in $(t_0,T]$. }
\end{itemize}
\end{proof}

{\color{black}

\begin{remark}\label{rem:EtaPositive}
    Let us note that, in the proof of point \emph{(4)} of Lemma~\ref{lem_eta}, we have proved that $\eta(t)>0$ for all $t\in [0,T)$ in the case $p_0=1$,  $\lambda_i>0$. 
\end{remark}

}
We now provide a complete characterization of the optimal controls of $\cP^T_{\lambda_v,\lambda_i,i_M}$.

\begin{theorem}
[structure of the optimal controls]\label{prop:OptimumNoDelay}
Let us make the additional assumption $(\lambda_i,\lambda_v)\neq (0,0)$.
The following propositions hold. 
\vspace{-2ex}
\begin{enumerate}[leftmargin=*]
\item[(A)] There exists $\tsw\in [0,T]$, the so-called  \emph{switching time}, such that $\hv$ is almost everywhere given by
\begin{equation}\label{eq:BangBangCOntrol}
\hv(t)=\begin{cases}
v_M&\text{if }t\in (0, \tsw),\\
0&\text{if }t\in (\tsw,T).
\end{cases}
\end{equation}
Moreover, $\tsw=T$ if and only if $\lambda_v=0$.
\item[(B)] If $\lambda_i=0$,  then the optimal control is unique and admits 
the following \emph{feedback representation}: 
$\hv(t)=w(\hx(t))$, with $w:B\to [0,v_M]$ defined by
\begin{equation}\label{eq:feedbackRepresentation}
w(x)=\begin{cases}
v_M&\text{if } x\in B\setminus A\\
0 &\text{if } x\in A. 
\end{cases}
\end{equation}
\item[(C)] It necessary holds that  $\hx(\tsw)\in A$. If $\hx(t_\star)\in\partial A$ then $\hv$ admits the feedback representation \eqref{eq:feedbackRepresentation}. 
If $\hx(t_\star)\in{\rm Int}(A)$ then $\hi(t)<i_M$ for every $t\ge t_\star$.
\end{enumerate} 
\end{theorem}



    


\begin{proof}

Let us prove~\emph{(A)}. If $p_0=1$, let us consider $\widehat t\in [0,T]$ as in assertion~\emph{(5)} of Lemma~\ref{lem_eta}. Then, the claim follows by taking $\tsw=\widehat t$ and using the continuity of $\varphi$ and {\color{black}the consequence \eqref{eq:sfv} of} the Weierstrass condition \emph{(E)} of Lemma~\ref{lemma:PontryaginNODelay}.

In the case $p_0=0$, by   \emph{(3)} of Lemma~\ref{lem_eta}, we have $\hv(t)=v_M$ for almost all $t\in [0,t_0)$, where $t_0\in [0,T)$ is the (unique) discontinuity point of $\eta$ and $p_i$.
{\color{black}It remains to}  characterize the behavior of the control $\hv$ in the interval $(t_0,T]$. 
Since in \emph{(1)} of Lemma~\ref{Lemma:Technical} we have proved that $\hs(t_0)\leq \frac{\gamma}{\beta}$ and $\hi(t_0)=i_M$ and recalling from Theorem~\ref{th_euri} that $\hs$ is strictly decreasing, we have that $\hs(t)<\frac{\gamma}{\beta}$ for all $t\in (t_0,T]$. Then, we also have $\hi'(t)=\beta\hs(t)\hi(t)-\gamma \hi(t)\leq (\beta\frac{\gamma}{\beta}-\gamma)\hi(t)<0$ for all $t\in (t_0,T]$. Since $\hi(t_0)=i_M$, this implies $\hi(t)<i_M$ for all $(t_0,T]$. {\color{black} Let us now distinguish the two cases $\lambda_i=0$ and $\lambda_i>0$.}
\begin{itemize}[leftmargin=0cm]
    \item[\empty] {\em Case
$\lambda_i=0$.} In this case, at $t=t_0$ we are in the set $A=A_0$ (see Definition \ref{def_viableset} and Theorem \ref{lemma:ViabilityDelayFree}). Then $\hv\equiv0$ in $(t_0,T)$ is trivially optimal.  \vspace{2ex}
\item[\empty] {\em Case $\lambda_i>0$.}  We observe that for any $\varepsilon>0$ and 
$t_\varepsilon:=t_0+\varepsilon \in (t_0, T)$, 
the principle of optimality {\color{black} (see Proposition \ref{Prop:PrincipleOfOptim})} implies that the control $\hv(\,\cdot+t_\varepsilon)\in L^\infty([0,T-t_\varepsilon], V)$ is an optimum for problem~\eqref{eq:OptimalControlProblemNoDelay} with initial condition  $(s(0),i(0))=(\hs(t_\varepsilon),\hi(t_\varepsilon))$ for the time-horizon $[0,T_{\varepsilon}]$,  with $T_{\varepsilon}=T-t_\varepsilon$. Since  $\hi(t)<i_M$ for all $t\in [t_\varepsilon,T]$, considering the necessary conditions of  Lemma~\ref{lemma:PontryaginNODelay} for this new optimal control problem, we would trivially obtain $d\mu_\varepsilon= 0$ (implying $p_{0,\varepsilon}=1$), and thus, again by assertion \emph{(5)} of Lemma~\ref{lem_eta}, we necessarily have that $\hv$ in $[t_\varepsilon, T]$ is of the bang-bang form, i.e., there exists a $t_{\star,\varepsilon}\in [t_\varepsilon, T]$ such that the expression
\[
\hv(t)=\begin{cases}
v_M\;\;\;&\text{if}\;t\in (t_\varepsilon,t_{\star,\varepsilon}),\\
0\;\;\;&\text{if}\;t\in ( t_{\star,\varepsilon},T),
\end{cases}
\]
holds for almost all $t\in [t_\varepsilon, T]$. 
 By letting $\varepsilon\to 0$, we have that there exists $\tsw\ge t_0$ such that (up to a subsequence) $t_{\star,\varepsilon}\to \tsw$ and we obtain that 
 \[
\hv(t)=\begin{cases}
v_M\;\;\;&\text{for a.e.\ }t\in (t_0,\tsw),\\
0\;\;\;&\text{for a.e.\ }t\in (\tsw,T).
\end{cases}
\]
Now we can conclude by recalling that, by  assertion \emph{(3)} in Lemma~\ref{lem_eta}, $\hv(t)=v_M$ for almost all $t\in [0,t_0)$. We also note that, by \emph{(5)} of Lemma~\ref{lem_eta}, in the preceding construction the sequence $t_{\star,\varepsilon}$ is constant and equal to $ t_\star=T$ if and only if $\lambda_v=0$.
\end{itemize}

Let us now prove~\emph{(B)}. By~\emph{(A)} the optimal control is of the \emph{bang-bang} form as in~\eqref{eq:BangBangCOntrol}. If $(s_0,i_0)\in A$, by forward invariance proved in Theorem~\ref{lemma:ViabilityDelayFree}, we have that the control $v\equiv 0$ is feasible, and moreover it is straightforwardly the unique minimum, since $\lambda_i=0$.
Let us then consider a point $(s_0,i_0)\in B\setminus A$ and suppose by contradiction that the switching time $\tsw \in [0,T)$ is such that $\hx(\tsw)\notin A$. In this case, we have that $\hs(\tsw)>\frac{\gamma}{\beta}$ and $\hv(t)=0$ for a.a.\ $t\in (\tsw,T]$. Since, by Assumption \ref{assum:UnderHerdFInal}, $\hs(T)< \frac{\gamma}{\beta}$, there exists $t_{\frac{\gamma}{\beta}}\in (\tsw, T)$ such that $\hs(t_{\frac{\gamma}{\beta}})=\frac{\gamma}{\beta}$. But, then, we would have reached the no-effort set $A=\cA_0$ with control $0$ in contradiction with $\hx(\tsw)\notin A$ (recall Theorem~\ref{lemma:ViabilityDelayFree}).
Then we have proved that $\hx(\tsw)\in A$ and the proof of {\em(B)} is concluded, by noting that the aforementioned feedback representation directly implies that the optimal control is unique.  

To prove \emph{(C)} we first observe that also in the general case we necessarily have $\hx(\tsw)\in A$ 
because switching outside the set $A$ leads to infeasibility, and thus to a contradiction. 

If $\hx(t_\star)\in\partial A$ then the trajectory still remains inside $A$ by forward invariance and the optimal control admits the claimed feedback representation. 

If, instead, $\hx(t_\star)\in{\rm Int}(A)$, to prove that $i(t)<i_M$ for all $t>t_\star$ we distinguish the following cases.
\begin{enumerate}[leftmargin=0cm]
\item[\empty] If $\hs(t_\star)\le\frac{\gamma}{\beta}$, then $\hi(t_\star)<i_M$ and $\hi$ is strictly decreasing in $(t_\star,T]$, and so we are done.\vspace{2ex}
\item[\empty] If $\hs(t_\star)>\frac{\gamma}{\beta}$,  then we are strictly under the curve $\Gamma_A$ which has maximum value $i_M$. The claim follows by the fact that, with control zero, we remain strictly under the curve (see Remark~\ref{rem_cd}).
\end{enumerate}
\vspace{-3ex}
\end{proof}


{
\begin{remark}[particular cases] \label{rem:ParticularAndUniqueness}
In the case $\lambda_i=0$, the feedback representation provided in \emph{(B)} of Theorem~\ref{prop:OptimumNoDelay},  implies that 
    at the \emph{switching time} $\tsw$ we have  $(s(\tsw),i(\tsw))\in \partial A$; hence $\tsw$ occurs before reaching herd immunity, that is, ${\color{black}\tsw}\le t_{\frac{\gamma}{\beta}}\le \frac{1}{\gamma i_0}\big(s_0-\frac{\gamma}{\beta})^+$ (see \emph{(7)} of Theorem~\ref{th_euri}).
Moreover, the \emph{switching point} $(s_\star,i_\star):=(s(\tsw),i(\tsw))$, i.e., the point of the trajectory in which the optimal control is discontinuous, lies on the intersection of the trajectory of~\eqref{eq:SIR2} with $v\equiv v_M$ starting at $(s_0,i_0)$ (plotted in red in Figure~\ref{optl10_fig} 
 {\color{black} and given by \eqref{eq_siid} with $v_0=v_M$}), with the curve $\Gamma_A$ defined in {\color{black}equation \eqref{eq:GammaBar} of}  Theorem~\ref{lemma:ViabilityDelayFree} (represented in green in Figure~~\ref{optl10_fig}). It can thus be easily computed {\color{black}by solving the system of equations of the two curves}, for any initial condition $(s_0,i_0)\in B$, and it {\color{black} turns out to be} equal to 
$$
i_\star=i_0{\rm e}^{\frac{\beta}{v_M}(s_0-\frac{\gamma}{\beta}+i_0-i_M-\frac{\gamma}{\beta}\log\frac{\beta s_0}{\gamma})},\quad 
s_\star=-\frac{\gamma}{\beta}{\color{black}W_{-1}}\Big(-\frac{1}{{\rm e}^{1+\frac{\beta}{\gamma}(i_M-i_\star)}}\Big), 
$$
where {\color{black}$W_{-1}$ is the inverse of the restriction to $(-\infty,-1)$ of the function $f(x)=x{\rm e}^x$, which is the so-called}  Lambert's function. We plotted a qualitative representation of the optimal solution in the case $\lambda_i=0$, together with the switching point $(s_\star,i_\star)$, in  Figure~\ref{optl10_fig}.

\begin{figure}[h!]
\begin{center}
\includegraphics[width=0.95
\textwidth]{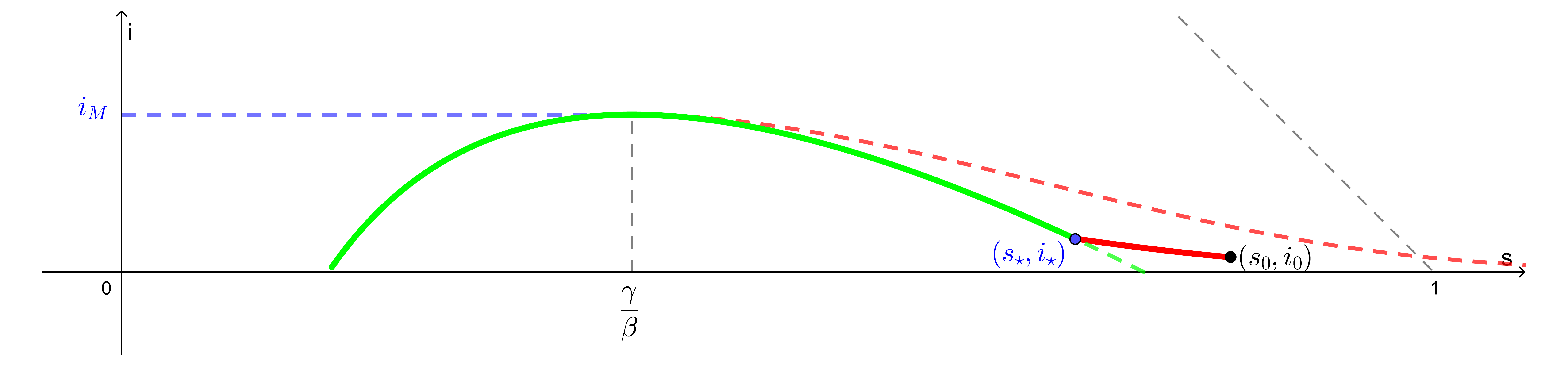}
\end{center}
\caption{The optimal strategy in the case $\lambda_i=0$}\label{optl10_fig}
\end{figure}

In the case $\lambda_v=0$, as proved in~\emph{(A)} of Theorem~\ref{prop:OptimumNoDelay} we have $t_\star=T$ and, thus, the control $\hv\equiv v_M$ is the (unique) optimum.
  In other words, when the cost does not explicitly depend on the control input, the choice $\hv=v_M$ minimizes the integral $\int_0^{T} \hi(t)\;ds$.  This reflects the intuitive idea that, if the vaccination action has no cost, the optimal strategy is to always keep the vaccination at its maximum admissible rate. 

In the general case, where both $\lambda_i>0$ and $\lambda_v>0$, providing an explicit characterization of the switching point/time is  challenging and will be discussed in the rest of this subsection.

\end{remark}
In the following statements we provide some further qualitative observations. First, we prove that if $\lambda_i/\lambda_v\le\beta$ then the vaccination intervention must cease  before reaching herd immunity.

\begin{prop}\label{pr_l2lebl1}
Let $\lambda_i,\lambda_v>0$ and suppose that Assumption~\ref{assum:UnderHerdFInal} holds true.   Let $v$ be an optimal control for problem in~\eqref{eq:OptimalControlProblemNoDelay} with a given $(s_0,i_0)\in B$ and $s,i:[0,T]\to \R$ be the corresponding trajectory.   Moreover, let us assume that the switching time $\tsw$ is strictly positive.  If  $\lambda_i\leq \beta\lambda_v$ then $\hs(\tsw)\geq \frac{\gamma}{\beta}$.
\end{prop}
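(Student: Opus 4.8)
The plan is to evaluate the constant Hamiltonian at the switching instant $\tsw$ and extract the sign of $\beta\hs(\tsw)-\gamma$ from the resulting scalar identity. Since $\lambda_v>0$, assertion \emph{(4)} of Theorem~\ref{prop:OptimumNoDelay} gives $\tsw<T$, so together with $\tsw>0$ the switch is genuinely interior, $\tsw\in(0,T)$, and by the continuity of $\varphi$ and the Weierstrass condition \eqref{eq:sfv} the switching function vanishes there: $\varphi(\tsw)=0$.

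First I treat the normal case $p_0=1$. From $\varphi(\tsw)=\lambda_v-\hp_s(\tsw)\hs(\tsw)=0$ and $\hs(\tsw)>0$ (Theorem~\ref{th_euri}) I obtain $\hp_s(\tsw)=\lambda_v/\hs(\tsw)>0$. The proof of assertion \emph{(3)} of Theorem~\ref{prop:OptimumNoDelay} established $\eta(t^-)>0$ for every $t\in[0,T)$; with the left-continuous convention this yields $\eta(\tsw)>0$, hence $\hp_i(\tsw)=\eta(\tsw)+\hp_s(\tsw)>0$. By Lemma~\ref{Lemma:Technical}\emph{(3)} the Hamiltonian equals $\lambda_i\hi(T)$ a.e.; evaluating \eqref{eq:HamiltonianNoDelay} at $\tsw$, where $\varphi(\tsw)\hv(\tsw)=0$, and substituting $\eta(\tsw)=\hp_i(\tsw)-\lambda_v/\hs(\tsw)$, a short rearrangement gives
\[
(\beta\hs(\tsw)-\gamma)\,\hp_i(\tsw)\,\hi(\tsw)=\lambda_i\hi(T)+(\beta\lambda_v-\lambda_i)\,\hi(\tsw).
\]
As $\hi(T)\ge0$, $\hi(\tsw)>0$, and $\lambda_i\le\beta\lambda_v$, the right-hand side is nonnegative; since $\hp_i(\tsw)\hi(\tsw)>0$ this forces $\beta\hs(\tsw)-\gamma\ge0$, i.e.\ $\hs(\tsw)\ge\frac{\gamma}{\beta}$.

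It remains to rule out the abnormal case $p_0=0$. By non-degeneration and Lemma~\ref{Lemma:Technical} we are then in case \emph{(1)(c)} (case \emph{(1)(b)} would force $\tsw=0$), so there is $t_0\in(0,T)$ with $\hs(t_0)=\frac{\gamma}{\beta}$, and assertion \emph{(4)} of Theorem~\ref{prop:OptimumNoDelay} gives $\tsw\ge t_0$; as $\hs$ is strictly decreasing I only need to exclude $\tsw>t_0$. For this I apply the principle of optimality on $[t_0+\varepsilon,T]$: the tail satisfies $\hi<i_M$, so the reduced problem is constraint-inactive with multiplier $p_{0,\varepsilon}=1$, and the computation above applies to its switch $t_{\star,\varepsilon}$, giving $\hs(t_{\star,\varepsilon})\ge\frac{\gamma}{\beta}$ whenever that switch is interior. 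Since $t_{\star,\varepsilon}>t_0+\varepsilon$ would entail $\hs(t_{\star,\varepsilon})<\frac{\gamma}{\beta}$, the reduced control is $\equiv0$ on $(t_0+\varepsilon,T)$; letting $\varepsilon\to0$ yields $\tsw=t_0$, whence $\hs(\tsw)=\frac{\gamma}{\beta}$.

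The delicate point is the bookkeeping when $\tsw$ coincides with the single jump point of $\hp_i$: there one must use left-limits consistently so that both the Hamiltonian identity and $\eta(\tsw)>0$ survive. Apart from that, the argument is purely the algebra of the Hamiltonian plus an elementary sign count, the crucial step being the substitution $\hp_s(\tsw)=\lambda_v/\hs(\tsw)$, which is precisely what turns the threshold $\lambda_i\le\beta\lambda_v$ into the geometric statement $\hs(\tsw)\ge\frac{\gamma}{\beta}$.
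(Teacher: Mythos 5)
Your argument is essentially the paper's: both evaluate the constant Hamiltonian at $\tsw$ (Lemma~\ref{Lemma:Technical}\emph{(3)}), use $\varphi(\tsw)=0$ to substitute $\hp_s(\tsw)=\lambda_v/\hs(\tsw)$, invoke the positivity of $\eta$ (equivalently of $\hp_i$) established in the proof of Theorem~\ref{prop:OptimumNoDelay}\emph{(3)}, and reduce the abnormal case $p_0=0$ to the normal one by cutting the problem past the support of $d\mu$ and applying the principle of optimality. Your normal-case computation is arguably cleaner than the paper's, since you derive an exact identity rather than a chain of inequalities for the contrapositive, and it even yields the strict conclusion $\hs(\tsw)>\frac{\gamma}{\beta}$ because the right-hand side $\lambda_i\hi(T)+(\beta\lambda_v-\lambda_i)\hi(\tsw)$ is strictly positive. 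The one step that is asserted rather than proved is the parenthetical claim that case \emph{(1)(b)} of Lemma~\ref{Lemma:Technical} ``would force $\tsw=0$''. As a standalone statement this is false: in that case $p_0=0$ and $d\mu=a\delta_0$ give $\hp_s\equiv\hp_i\equiv 0$ and $\varphi\equiv 0$ on $(0,T]$, so the Pontryagin conditions impose nothing on $\hv$; since the initial point lies in $A$ the constraint is inactive and an optimal control with $\tsw>0$ is perfectly possible when $\lambda_i$ is large. The gap is easily repaired — the same cutting argument you use for case \emph{(1)(c)} applies verbatim with $t_0=0$ (one has $\hi<i_M$ on $(0,T]$, so the cut problem is non-degenerate), and under $\lambda_i\le\beta\lambda_v$ it yields $\hs(\tsw)\ge\frac{\gamma}{\beta}\ge s_0\ge\hs(\tsw)$ with the middle inequality strict unless $\tsw=0$, which is what you need — but you should say so; the paper sidesteps the issue by treating $t_0=0$ and $t_0\in(0,T)$ uniformly in its reduction.
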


\begin{remark}
In Proposition \ref{pr_l2lebl1}  we are assuming that, for the considered initial conditions, we have a non-trivial switching time $\tsw>0$. 
This, under Assumption~\ref{assum:UnderHerdFInal}, is straightforwardly satisfied if $(s_0,i_0)\in B\setminus A$ (see~\emph{(C)} of Theorem~\ref{prop:OptimumNoDelay}). On the other hand, this could be the case also if $(s_0,i_0)\in A$, if $\lambda_i>0$ is large enough.   
Figures \ref{optl2_betal1_fig} and \ref{optl2big_fig} show the qualitative behavior of the optimal strategies that can be expected in view of Proposition \ref{pr_l2lebl1} in the cases in which  $\lambda_i$ is, 
 respectively, small and large enough (with respect to $\lambda_v$). 
\end{remark}

\begin{figure}[h!]
\begin{center}
\includegraphics[width=0.95
\textwidth]{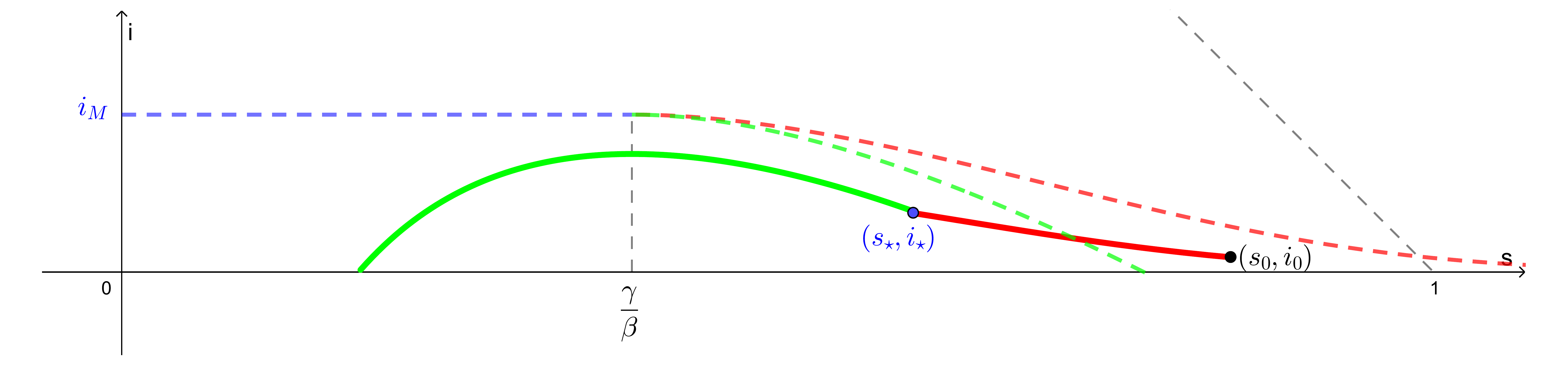}
\end{center}
\caption{Optimal strategy in the case $0<\lambda_i<\beta\lambda_v$}\label{optl2_betal1_fig}
\end{figure}

\begin{figure}[h!]
\begin{center}
\includegraphics[width=0.95
\textwidth]{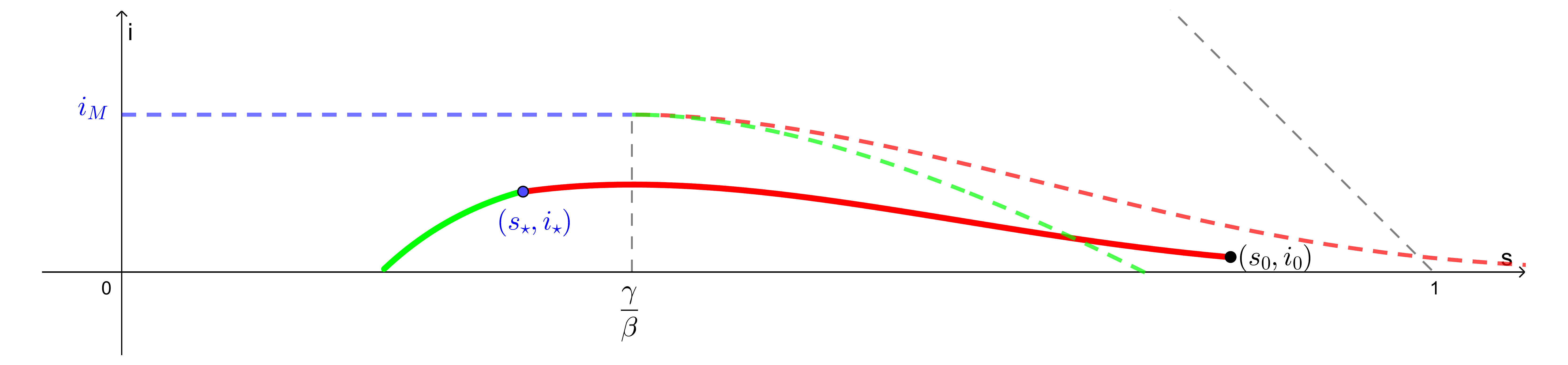}
\end{center}
\caption{Expected optimal strategy in the case $\lambda_i>\beta\lambda_v$ and large    enough\label{optl2big_fig}}
\end{figure}

\begin{proof}
We prove the contrapositive implication, that is,  $\hs(\tsw)<\frac{\gamma}{\beta}\;\Rightarrow\;\lambda_i>\lambda_v\beta$.
Let us suppose, without loss of generality, that, for the considered representative choice of $\hp_s,\hp_i,\hv:[0,T]\to \R$, the Hamiltonian $\widehat H$ be constant on $[0,T]$ (everywhere, not only almost everywhere). 
Let us argue by cases.
\begin{itemize}[leftmargin=0cm]
    \item[\empty] \emph{Case $p_0=1$.} By \emph{(2)} of Lemma~\ref{lem_eta}, we have
\[
0<\lambda_i \hi(T)=\widehat H(\tsw).
\]
In $\tsw$, by definition, we have $\varphi(\tsw)=0$, and, \textcolor{black}{since $\lambda_v\neq 0$ we have $\tsw\in (0,T)$ (see assertion~\emph{(A)} in Theorem~\ref{prop:OptimumNoDelay})}. Thus, \textcolor{black}{using the expression~\eqref{eq:HamiltonianNoDelay} of the Hamiltonian}, the assumption $\hs(\tsw)<\frac{\gamma}{\beta}$ \textcolor{black}{and the fact that $\eta(t)>0$ for all $t\in [0,T)$ (see Remark~\ref{rem:EtaPositive})}, we obtain
\[
\begin{aligned}
0<\widehat H(\tsw)&={\color{black}\lambda_i \hi(\tsw)-\gamma \hp_i(\tsw)\hi(\tsw)+\eta(\tsw)\beta \hs(\tsw)\hi(\tsw)}\\&{\color{black}< \lambda_i \hi(\tsw)-\gamma \hp_i(\tsw)\hi(\tsw)+\gamma\eta(\tsw) \hi(\tsw)}=\lambda_i \hi(\tsw)-\gamma \hp_s(\tsw)\hi(\tsw).
\end{aligned}
\]
Now, recalling that $0=\varphi(\tsw)=\lambda_v-\hp_s(\tsw)\hs(\tsw)$, we have that $\hp_s(\tsw)=\frac{\lambda_v}{\hs(\tsw)}$, and thus 
\[
0<\lambda_i \hi(\tsw)-\gamma\frac{\lambda_v}{\hs(\tsw)} \hi(\tsw)<\lambda_i \hi(\tsw)-\gamma\beta\frac{\lambda_v}{\gamma}\hi(\tsw)=(\lambda_i-\beta\lambda_v)\hi(\tsw).
\]
This implies $0<\lambda_i-\beta\lambda_v$, i.e., $\lambda_i>\beta\lambda_v$ as required.\vspace{2ex}
\item[\empty] \emph{Case $p_0=0$.} {By {\it(1)} of Lemma~\ref{Lemma:Technical} there exists $t_0\in [0,T)$ such that $\text{supp}(d\mu)=\{t_0\}$,  $\hs(t_0)\leq \frac{\gamma}{\beta}$ and $\hi(t_0)=i_M$, with  $\hs(t_0)<\frac{\gamma}{\beta}$ only if $t_0=0$.
By~\emph{(3)} of Lemma~\ref{lem_eta} we have  $\hv(t)=v_M$ for almost all $t\in [0,t_0)$.
Since we are supposing $\tsw>0$ is such that $\hs(\tsw)<\frac{\gamma}{\beta}$, we necessarily have that 
$\tsw>t_0$.

Since  $\hs$ is strictly decreasing and $\hs(t_0)\leq\frac{\gamma}{\beta}$, we have $\hs(t)<\frac{\gamma}{\beta}$ for all $t\in (t_0,T]$. Since $i(t_0)=i_M$, this in turns implies that $i(t)<i_M$ for all $t\in (t_0,T]$. {\color{black}
We can now consider the   optimal control problem, $\cP^{T-t_1,\hs(t_1),\hi(t_1)}_{\lambda_v,\lambda_i,i_M}$, with a new initial time $t_1\in (t_0,\tsw)$ and a new initial condition $(\hs(t_1),\hi(t_1))\in B$.}
By the principle of optimality (see Proposition~\ref{Prop:PrincipleOfOptim}), $\hv(\,\cdot\,+t_1)$ is an optimum also for this new optimal control problem, and thus satisfies the conditions of Theorem~\ref{prop:OptimumNoDelay} in the non-degenerate case (i.e., with $p_0=1$). 
 {\color{black} This is because  $i(t)<i_M$ for all $t\in [t_1,T]$ which implies that (by complementarity) the multiplicator $d\mu$ for this new problem is $0$, which in turn implies $p_0=1$ by \emph{(1)(a)} of Lemma~\ref{Lemma:Technical}.} Then, by arguing as in the previous case, it can be proved that  $\lambda_i>\beta\lambda_v$, as required. }
\end{itemize}
\vspace{-4ex}
\end{proof}

{
In the next statement, we prove that, when $\lambda_i$ is small enough depending on  $\lambda_v$, any optimal control  of $\cP^T_{\lambda_v,\lambda_i,i_M}$ coincides with the optimal control defined in~\emph{(B)} of Theorem~\ref{prop:OptimumNoDelay}, corresponding to the case $\lambda_i=0$. } This, in particular, proves the uniqueness of the optimal control of $\cP^T_{\lambda_v,\lambda_i,i_M}$ in the case in which the cost of the vaccination program is ``prevalent'' with respect to the cost of treatment of infected individuals.

\begin{prop}\label{prop:LambdaSmallEnough}
Let $(s_0,i_0)\in B$ and $T>0$ satisfying Assumption~\ref{assum:UnderHerdFInal}. The following equivalent propositions hold.
\begin{enumerate}[leftmargin=*]
\item
For every $\lambda_v>0$ there exists $\overline \lambda_i>0$ such that,  for any $\lambda_i\leq \overline \lambda_i$, 
the optimal controls for $\cP^{T}_{\lambda_v,\lambda_i,i_M}$ coincide with 
the (unique) optimal control  for $\cP^T_{1,0,i_M}$.
\item For every $\lambda_i>0$ there exists $\overline \lambda_v>0$  such that, for any $\lambda_v\geq \overline \lambda_v$, the  optimal controls for $\cP^{T}_{\lambda_v,\lambda_i,i_M}$ coincide with 
the (unique) optimal control  for $\cP^T_{1,0,i_M}$.
\end{enumerate}
\end{prop}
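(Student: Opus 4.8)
The plan is to exploit the scaling structure of the cost together with the one-parameter description of the optimal controls furnished by Theorem~\ref{prop:OptimumNoDelay}, thereby reducing the whole statement to a scalar comparison. I first record the scaling invariance: since $J(v,s,i)=\lambda_v\int_0^T\big(v+\tfrac{\lambda_i}{\lambda_v}\,i\big)\,dt$, the set of optimal controls of $\cP^{T}_{\lambda_v,\lambda_i,i_M}$ depends only on the ratio $\lambda_i/\lambda_v$ and coincides with that of $\cP^{T}_{1,\lambda_i/\lambda_v,i_M}$; in particular $\cP^{T}_{1,0,i_M}=\cP^{T}_{\lambda_v,0,i_M}$ for every $\lambda_v>0$. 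Hence statements $(1)$ and $(2)$ are two readings of the single assertion ``there is a threshold on $\lambda_i/\lambda_v$ below which the optimum coincides with the $\lambda_i=0$ optimum'', so it suffices to fix $\lambda_v>0$ and prove $(1)$. I may assume $v_M>0$, the case $v_M=0$ being trivial since then $v\equiv0$ is the only admissible control.

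Next I would reduce to a scalar minimization. By Theorem~\ref{prop:OptimumNoDelay}\,$(4)$ every optimal control equals, almost everywhere, the bang-bang control $v^\tau$ (with $v^\tau\equiv v_M$ on $(0,\tau)$ and $v^\tau\equiv0$ on $(\tau,T)$) for some switching time $\tau$; let $(s^\tau,i^\tau)$ denote the corresponding trajectory. By item $(6)$ the switching point lies in $A$, while switching to $0$ from a point of $B\setminus A$ violates the constraint because $A=\cA_0$ is the no-effort safe set; hence the admissible switching times form exactly the interval $[\tau_0,T]$, where $\tau_0$ is the first time the $v_M$-trajectory issued from $(s_0,i_0)$ reaches $A$ (this is finite and strictly less than $T$ by Remark~\ref{rem_stgb}). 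By item $(5)$, $\tau_0$ is precisely the switching time of the unique optimum of $\cP^{T}_{1,0,i_M}$. Writing $I(\tau):=\int_0^T i^\tau\,dt$, the cost of $v^\tau$ is $g(\tau):=\lambda_v v_M\tau+\lambda_i I(\tau)$, so that selecting an optimal control amounts to minimizing $g$ on $[\tau_0,T]$.

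The hard (though routine) part is a uniform Lipschitz estimate for $I$: I would show $|I(\tau)-I(\tau_0)|\le C(\tau-\tau_0)$ for all $\tau\in[\tau_0,T]$, with $C$ depending only on $T,\beta,\gamma,v_M$. Since $v^\tau$ and $v^{\tau_0}$ agree on $(0,\tau_0)$ and differ by at most $v_M$ only on $(\tau_0,\tau)$, I would split $f(s^\tau,i^\tau,v^\tau)-f(s^{\tau_0},i^{\tau_0},v^{\tau_0})$ into a state-Lipschitz part, dominated by $L\,|(s^\tau,i^\tau)-(s^{\tau_0},i^{\tau_0})|$ with $L$ the Lipschitz constant of $f(\cdot,\cdot,v)$ on the invariant triangle $\bT$ (Remark~\ref{rem:Boundedness}), and a control part $(-(v^\tau-v^{\tau_0})s^{\tau_0},0)$ supported on $(\tau_0,\tau)$ and bounded by $v_M$. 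Gr\"onwall's inequality then yields $|(s^\tau,i^\tau)(t)-(s^{\tau_0},i^{\tau_0})(t)|\le v_M e^{LT}(\tau-\tau_0)$ for all $t\in[0,T]$, and integrating the $i$-component over $[0,T]$ gives the claim with $C=Tv_M e^{LT}$. This continuous-dependence bound, uniform in $\tau$, is the only genuine work in the argument.

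Finally I would conclude. The estimate gives $g(\tau)-g(\tau_0)\ge(\lambda_v v_M-\lambda_i C)(\tau-\tau_0)$; taking $\overline\lambda_i:=\lambda_v v_M/(2C)>0$, for every $\lambda_i\le\overline\lambda_i$ and every $\tau\in(\tau_0,T]$ one obtains $g(\tau)-g(\tau_0)\ge\tfrac12\lambda_v v_M(\tau-\tau_0)>0$. Thus $g$ attains its minimum on $[\tau_0,T]$ only at $\tau_0$; since an optimum exists by Corollary~\ref{cor_ocfc} and, by Theorem~\ref{prop:OptimumNoDelay}\,$(4)$, every optimal control is some $v^\tau$ with cost $g(\tau)$, the minimizing switch time must be $\tau_0$. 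Hence the optimal control is unique and coincides with the $\lambda_i=0$ optimum $v^{\tau_0}$, which proves $(1)$; by the scaling reduction of the first step, $(2)$ follows as well.
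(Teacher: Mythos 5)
Your proof is correct, but it follows a genuinely different route from the paper's. After the same scaling reduction to a single threshold on $\lambda_i/\lambda_v$, the paper stays inside the Pontryagin framework: it fixes $\lambda_i$, lets $\lambda_v\to\infty$, reduces via the principle of optimality to the unconstrained problem started from the first point of $A$ on the trajectory, and then derives a contradiction from the switching condition $\hs(\tsw)=\lambda_v/p_s(\tsw-t_A)$ together with a Lipschitz bound on the adjoint $p_s$ (with constant independent of $\lambda_v$) and $p_s(T-t_A)=0$. You instead make a purely primal argument: using parts \emph{(4)}--\emph{(6)} of Theorem~\ref{prop:OptimumNoDelay} to identify the candidate optima with the one-parameter family $v^\tau$, $\tau\in[\tau_0,T]$, you compare costs directly via $g(\tau)=\lambda_v v_M\tau+\lambda_i I(\tau)$ and a Gr\"onwall continuous-dependence estimate $|I(\tau)-I(\tau_0)|\le Tv_Me^{LT}(\tau-\tau_0)$, so the linear gain $\lambda_v v_M(\tau-\tau_0)$ dominates for $\lambda_i$ small. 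Both arguments rest on the same structural theorem, but yours dispenses with the adjoint entirely beyond that point and yields an explicit, computable threshold $\overline\lambda_i=\lambda_v v_M/(2C)$ in terms of $T$, $v_M$ and the Lipschitz constant of $f$ on $\bT$, whereas the paper's threshold $\lambda_v\ge s_0L(T-t_A)$ involves the (less explicit) Lipschitz constant of the adjoint system. The one place where you should be slightly more careful is the claim that the feasible switching times are exactly $[\tau_0,T]$: switching at $\tau<\tau_0$ places the state in $B\setminus A=B\setminus\cA_0$, and you need Assumption~\ref{assum:UnderHerdFInal} to guarantee that the resulting violation of $i\le i_M$ (which occurs before the uncontrolled trajectory's peak at $s=\gamma/\beta$) happens within $[0,T]$ rather than afterwards; this is true but worth a sentence.
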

\begin{proof}
The equivalence of \emph{(1)} and \emph{(2)} trivially follows \textcolor{black}{from} the fact that, for any $\lambda_v,\lambda_i\geq 0$, if $v$ is optimal for $\cP^T_{\lambda_v,\lambda_i,i_M}$, it is optimal also for $\cP^T_{\omega \lambda_v,\omega\lambda_i,i_M}$, for any $\omega>0$.

Let us prove~\emph{(2)}. Fixing $\lambda_i>0$, for any $\lambda_v>0$ we  consider any $v_{\lambda_v}\in \cV_T$ optimal for $\cP^T_{\lambda_v,\lambda_i,i_M}$, and denote by $x^{v_{\lambda_v}}:[0,T]\to B$ the corresponding trajectory.  By Assumption~\ref{assum:UnderHerdFInal}, we can consider $t_A:=t_A(\lambda_v)\in [0,T)$, the minimal time such that $x^{v_{\lambda_v}}(t_A(\lambda_v))\in A$.
By Theorem~\ref{prop:OptimumNoDelay} (assertion  {\it(A)})  we know that $v_{\lambda_v}$ necessarily has a $v_M\,$-$\,0$ bang-bang form while assertion  {\it(C)} of the same proposition implies  $v^{\lambda_v}(t)=v_M$ a.e. in $[0,t_A]$. As a first consequence, this implies that $t_A$ is, in fact,  independent of $\lambda_v$. Moreover, by the 
principle of optimality, $v_{\lambda_v}(\,\cdot\,+t_A)$ is optimal for $\cP^{T-t_A}_{\lambda_v,\lambda_i,i_M}$ with initial condition equal to $x^{v_{\lambda_v}}(t_A)$.

Since this ``new'' initial condition lies in $A$, which is forward invariant, $v_{\lambda_v}(\,\cdot\,+t_A)$ is optimal also for the \emph{unconstrained} problem $\cP^{T-t_A}_{\lambda_v,\lambda_i,1}$. Let us consider the corresponding adjoint state $(p_s,p_i):[0,T-t_A]\to\R^2$ satisfying the conditions in Lemma~\ref{lemma:PontryaginNODelay}; since the problem is unconstrained, we can assume without loss of generality that $d\mu=0$  and, hence,  $p_0=1$. 
We claim that there exists a $\overline \lambda_v$ large enough such that $v_{\lambda_v}(t)=0$ a.e.\ in $[t_A,T]$, for any $\lambda_v\geq \overline \lambda_v$.
We suppose, by contradiction, that for any $\lambda_v>0$ there exists a non-trivial switching time $\tsw\in (t_A,T)$ (i.e., $v_{\lambda_v}=v_M1_{[0,\tsw]}$).  At such time, again by Lemma~\ref{lemma:PontryaginNODelay}, we must have
\begin{equation}\label{eq:ConditionSwitchingg}
0=\varphi(\tsw)={\color{black}\lambda_v-p_s(\tsw-t_A)\hs^{v_{\lambda_v}}(\tsw)} \Leftrightarrow \hs^{v_{\lambda_v}}(\tsw)=\frac{\lambda_v}{p_s(\tsw-t_A)}.
\end{equation}
Since $(p_s,p_i)$ is solution to the adjoint equation in~(A) of Lemma~\ref{lemma:PontryaginNODelay}, which is a non-autonomous affine system with bounded coefficients,  then $p_s$ is Lipschitz continuous with constant $L>0$ depending on $(s_0,i_0)$ and $\lambda_i$ (but independent of  $\lambda_v$). Since $p_s(T-t_A)=0$, we thus have  $|p_s(t-t_A)|\leq L(T-t_A)$, for all $t\in [t_A,T]$. Thus, from~\eqref{eq:ConditionSwitchingg} we obtain 
\[
\hs^{v_{\lambda_v}}(\tsw)\geq\frac{\lambda_v}{L(T-t_A)}
\]
for every $\lambda_v>0$.  Since $t_A$ is a constant independent of $\lambda_v$, this is in contradiction with the fact that $\hs^{v_{\lambda_v}}(\tsw)\leq s_0$, for every $\lambda_v>0$. We have, in particular, proved that for any $\lambda_v\geq s_0L(T-t_A)$, $t_\star=t_A$, and thus $v_{\lambda_v}\equiv v_0$, where $v_0$ is the unique optimal control of $\cP^T_{1,0,i_M}$ characterized in~\emph{(A)} of Theorem~\ref{prop:OptimumNoDelay}, as claimed in the statement. 
\end{proof}

\subsection{Discussion and remarks about  uniqueness}\label{ss_uniqueness}

Under rather general assumptions, the existence of  optimal controls for problem  $\cP^T_{\lambda_v,\lambda_i, i_M}$ has been  proved in Corollary \ref{cor_ocfc}. Then, Theorem \ref{prop:OptimumNoDelay} ensures that such optima necessarily have a bang-bang structure $v_M\,$-$\,0$.
A natural question is 
to establish the \textcolor{black}{uniqueness} (or not) of such optima.
To our best knowledge this problem is still open in its general formulation, while 
a partial answer has been  given in Proposition \ref{prop:LambdaSmallEnough}. This subsection is devoted to further analyze the uniqueness problem.

First of all we see that  the uniqueness for the constrained problem can be reduced to the same property for 
the unconstrained one.

Given a control $v\in\cV$  and  the corresponding trajectory $x^v$, we introduce  the {\em reaching time} of the set $A$ as 
$$
t_A^v:=\inf\{t\in[0,T]\ \vert\ x^v(t)\in A\}
$$
with the convention $\inf\varnothing=\infty$. 


\begin{prop}
 Let $\lambda_v\geq 0$ and $\lambda_i\geq 0$ be such that $(\lambda_v,\lambda_i)\neq (0,0)$,  $i_M\in(0,1]$,  $(s_0,i_0)\in B$ and $T>\bar{t}(s_0,i_0)$.
 Let  $
t_A^{v_M}$  be the reaching time of the set $A$ with constant control $v_M$.
 Assume that  for  the initial condition   $(s_1,i_1):=\big(s(t_A^{v_M}),i(t_A^{v_M})\big )$   the unconstrained problem $\cP^{T-t_A^{v_M},s_1,i_1 }_{\lambda_v,\lambda_i,1}$ admit{\color{black}s} a unique optimal control.  Then,  
 problem  $\cP^{T,s_0,i_0}_{\lambda_v,\lambda_i, i_M}$  admits a unique optimal control.  
\end{prop}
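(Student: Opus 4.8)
The plan is to reduce the constrained problem on $[0,T]$ to the unconstrained subproblem on $[t_A^{v_M},T]$ by combining the bang-bang characterization of Theorem~\ref{prop:OptimumNoDelay} with the forward invariance of $A$ for \emph{all} controls.

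First I would fix an arbitrary optimal control $v$ for $\cP^{T,s_0,i_0}_{\lambda_v,\lambda_i,i_M}$ and invoke Theorem~\ref{prop:OptimumNoDelay}: $v$ has the $v_M$-$0$ bang-bang form with some switching time $\tsw$, and its trajectory $\hx$ satisfies $\hx(\tsw)\in A$. Since $v\equiv v_M$ on $(0,\tsw)$, the trajectory coincides there with the constant-$v_M$ flow $x^{s_0,i_0,v_M}$, so in particular $x^{s_0,i_0,v_M}(\tsw)\in A$. As $t_A^{v_M}$ is by definition the first instant at which this flow enters $A$, I obtain $t_A^{v_M}\le \tsw$. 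Consequently every optimal control equals $v_M$ a.e.\ on $[0,t_A^{v_M})$, so the portion of the trajectory on $[0,t_A^{v_M}]$ is fixed, independent of the chosen optimum, and lands at the prescribed point $(s_1,i_1)=x^{s_0,i_0,v_M}(t_A^{v_M})\in A$. This step is where the bang-bang structure does the essential work: it forces the initial segment of \emph{any} optimum to be the single deterministic $v_M$-arc.

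Next I would apply the principle of optimality with $\eps=t_A^{v_M}$ to conclude that the shifted control $v(\,\cdot+t_A^{v_M})$ is optimal for the constrained problem $\cP^{T-t_A^{v_M},s_1,i_1}_{\lambda_v,\lambda_i,i_M}$. The crucial observation is then that, because $(s_1,i_1)\in A$ and $A=\cA$ is forward invariant for every admissible control (Theorem~\ref{lemma:ViabilityDelayFree}(1)), every trajectory issued from $(s_1,i_1)$ remains in $A\subseteq C$ and therefore automatically satisfies $i\le i_M$. Hence the state constraint is inactive on this subproblem, and $\cP^{T-t_A^{v_M},s_1,i_1}_{\lambda_v,\lambda_i,i_M}$ has exactly the same admissible controls and the same cost as the unconstrained problem $\cP^{T-t_A^{v_M},s_1,i_1}_{\lambda_v,\lambda_i,1}$; the two problems coincide. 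By hypothesis the latter admits a unique optimal control, so $v(\,\cdot+t_A^{v_M})$ is uniquely determined. Combining this with the already-forced value $v\equiv v_M$ on $[0,t_A^{v_M})$ shows that $v$ itself is uniquely determined on $[0,T]$, which is the claim.

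I expect the only genuinely delicate point to be the identification of the constrained and unconstrained subproblems, i.e.\ the assertion that the constraint $i\le i_M$ is never binding once the trajectory has entered $A$; this rests entirely on the forward invariance of $A$ for all controls established in Theorem~\ref{lemma:ViabilityDelayFree}(1) together with the inclusion $A\subseteq C$. A minor point to handle is the degenerate case $t_A^{v_M}=T$ (which, in view of $t_A^{v_M}\le\tsw$, can occur only when $\lambda_v=0$ and $\tsw=T$), in which the subproblem is trivial and uniqueness is immediate; likewise the case $(s_0,i_0)\in A$, where $t_A^{v_M}=0$ and the whole problem \emph{is} the (unconstrained) subproblem.
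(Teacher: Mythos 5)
Your proposal is correct and follows essentially the same route as the paper's proof: use the bang-bang structure and the fact that the switching point lies in $A$ to force $v\equiv v_M$ on $[0,t_A^{v_M})$, then apply the principle of optimality together with the forward invariance of $A$ to identify the tail subproblem with the unconstrained problem $\cP^{T-t_A^{v_M},s_1,i_1}_{\lambda_v,\lambda_i,1}$, whose assumed uniqueness finishes the argument. The only (harmless) difference is cosmetic: you derive $t_A^{v_M}\le\tsw$ from $\hx(\tsw)\in A$, whereas the paper notes directly that $t_A^v=t_A^{v_M}$ because the reaching time depends only on the control before it, which equals $v_M$.
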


\begin{proof} 
Let $v$ be an optimal control for $\cP^{T,s_0,i_0}_{\lambda_v,\lambda_i, i_M}$ and $x^v$ be the corresponding trajectory which must reach the set $A$ because $T$ satisfies Assumption~\ref{assum:UnderHerdFInal}. Hence, the reaching time
$t_A^v$ 
is finite. By {\em(A)} and {\em(C)} of Theorem \ref{prop:OptimumNoDelay} we have that $v$ is bang-bang with a swithcing time   $ t_\star^v\ge t_A^v$, and thus we have $v=v_M$ a.e.\ in $(0,t_A^v)$. On the other hand, it is clear that $t_A^v$ depends only on the value of $v$ before $t_A^v$, which, in our case is the constant $v_M$. In other words, $t_A^v=t_A^{v_M}$. 

By  forward invariance, once $A$ has been reached, that is after $t_A^{v_M}$, by the principle of optimality 
 $v(\cdot+t_A^{v_M})$ is optimal also for the unconstrained problem $\cP^{T-t_A^{v_M},s_1,i_1}_{\lambda_v,\lambda_i,1}$ with initial condition $(s_1,i_1)=(s(t_A^{v_M}),i(t_A^{v_M}))$. Since, 
by assumption,  $\cP^{T-t_A^{v_M},s_1,i_1}_{\lambda_v,\lambda_i,1}$ admits a unique solution then   
$v$ is uniquely determined also in $(t_A^v, T)$, and the proof is concluded.
\end{proof}

\begin{remark} Problem  $\cP^{T-t_A^{v_M},s_1,i_1}_{\lambda_v,\lambda_i,1}$ satisfies Assumption~\ref{assum:UnderHerdFInal}, because $T-t_A^{v_M}>\bar{t}(s_1,i_1)$. Indeed,   
\begin{eqnarray*}
\bar{t}(s_1,i_1)&=&\sup\Big\{\inf_{t\ge0}\{s^{v,s_1,i_1}(t)\le\frac{\gamma}{\beta}\}\ \vert\ v\in\cV\Big\}\nonumber\\
&=&
\sup\Big\{
\inf_{\tau\ge t_A^{v_M}}\{s^{v,s_0,i_0}(\tau)\le\frac{\gamma}{\beta}\}\ \vert\ v\in\cV,\,
v_{|[0,t_A^{v_M}]}=v_M\Big\}- t_A^{v_M}\label{ttt}\\
&\le& \sup_{v\in\cV}\inf_{\tau\ge 0}\{s^{v,s_0,i_0}(\tau)\le\frac{\gamma}{\beta}\}- t_A^{v_M}
=\bar{t}(s_0,i_0)- t_A^{v_M}<T- t_A^{v_M}\nonumber
\end{eqnarray*}
 Hence, Proposition \ref{prop:LambdaSmallEnough} applies and 
 the assumption of the previous theorem (i.e., the uniqueness of the solution to  $\cP^{T-t_A^{v_M},s_1,i_1}_{\lambda_v,\lambda_i,1}$) is satisfied for non-trivial values of $\lambda_i$ and $\lambda_v$.
\end{remark}

\begin{remark}
The uniqueness for the unconstrained case has been conjectured in Remark 2 of~\cite{Behncke}.
If this conjecture was true,  then we would have fully proved the uniqueness also in the constrained case.  
\end{remark}

\subsection{Numerical Simulations} \label{sec:NumericalSim} We conclude this section by presenting some numerical simulations that illustrate the previous results. 
They have been obtained by using the open-source optimal control toolbox \textsc{Bocop} (\cite{BocopExamples,bonnans2012bocop}) with final time $T=400$ and epidemic parameters $\beta=0.18$, $\gamma=0.07$, $s_0=0.7$, $i_0=0.001$, $i_M=0.005=5i_0$, $v_M=0.01$. The chosen numerical method is the recommended Midpoint (implicit, 1-stage, order 1) with 1200 time steps.  

The figures below show the optimal control and the state of infections for objective functionals corresponding to different choices of the relative costs of the vaccination $v$ (modulated by $\lambda_v$) and of  the population $i$  of infected individuals $i$  (modulated by $\lambda_i$).



\begin{figure}[h]
\begin{minipage}{0.32\textwidth}
\begin{center}
\includegraphics[width=0.99\textwidth]
{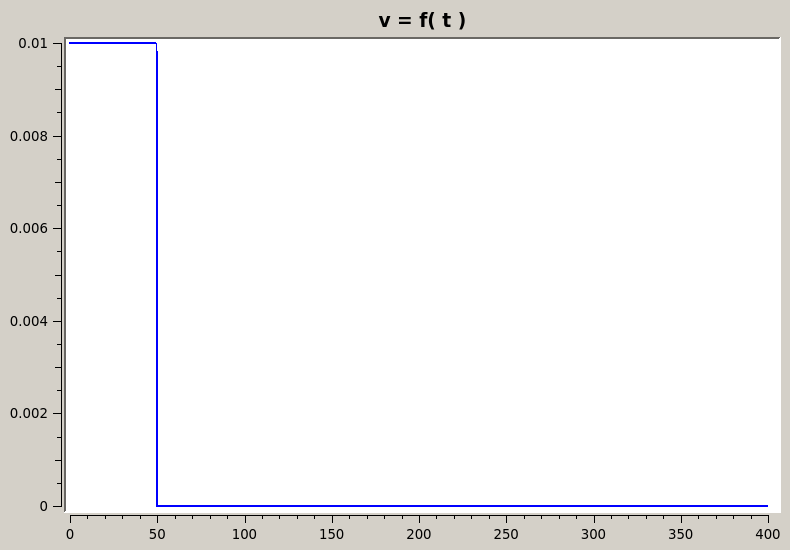}\\
\includegraphics[width=0.99
\textwidth]
{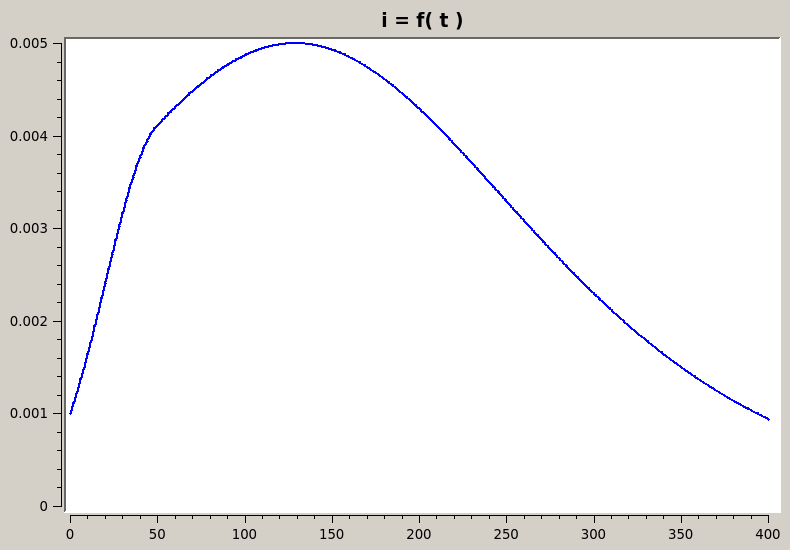}\\ 
\includegraphics[width=0.99
\textwidth]{vaccination_viability_curves_realscen_opt_star}\\
a) $\lambda_i=0$ and $\lambda_i=0.1\lambda_v<\beta\lambda_v$
\end{center}
\end{minipage}
\begin{minipage}{0.32\textwidth}
\begin{center}
\includegraphics[width=0.99\textwidth]
{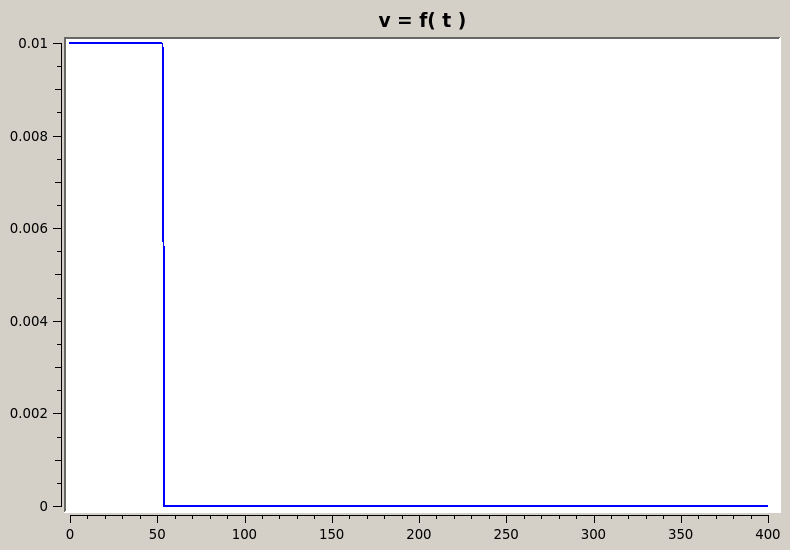}\\
\includegraphics[width=0.99\textwidth]
{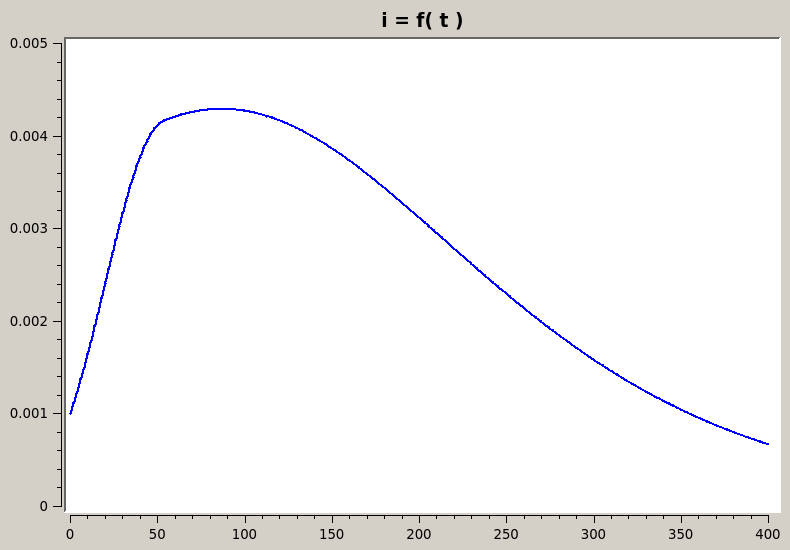}\\
\includegraphics[width=0.99
\textwidth]{vaccination_viability_curves_l2_betal1_opt_star}\\
b) $\lambda_i=0.17\lambda_v<\beta\lambda_v$
\end{center}
\end{minipage}
\begin{minipage}{0.32\textwidth}
\begin{center}
\includegraphics[width=0.99\textwidth]
{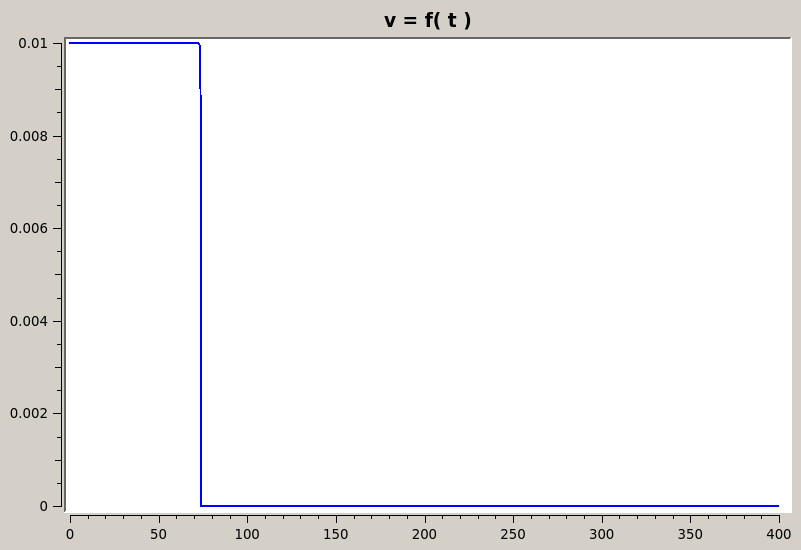}\\
\includegraphics[width=0.99\textwidth]
{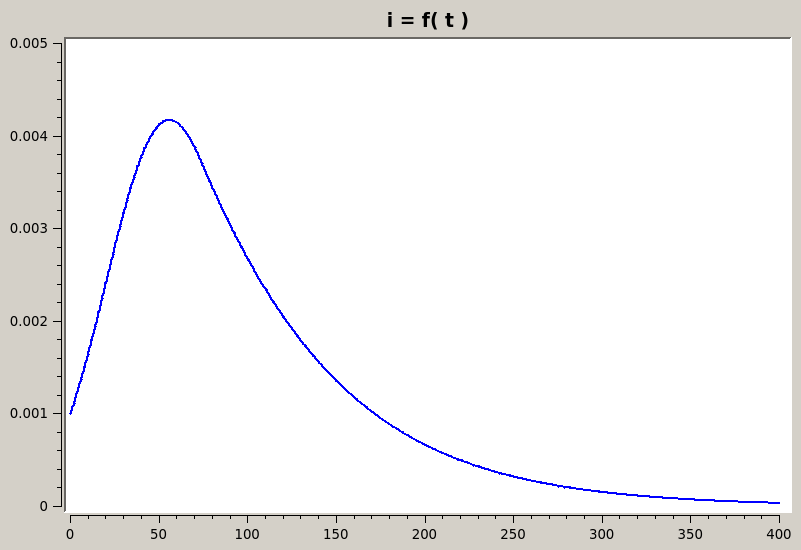}\\
\includegraphics[width=0.99
\textwidth]{vaccination_viability_curves_l2big_opt_star}\\
c) $\lambda_i=\lambda_v>\beta\lambda_v$
\end{center}
\end{minipage}
\caption{The three columns a), b) and c) show optimal control, state of infections and feedback optimal strategy for various choices of the specific costs $\lambda_v$ and $\lambda_i$.}
\label{fig_simul}
\end{figure}

In particular, Figure  \ref{fig_simul} a) deals with the case $\lambda_i=0$ in which the cost depends only on the vaccination rate $v$.  According to the feedback optimal strategy shown in Figure \ref{optl10_fig}, we see that, in this case, the peak of infections (corresponding to herd immunity) occurs long time after having stopped to control. Another simulation, made with $\lambda_i=0.1\lambda_v>0$,  produced exactly the same results, accordingly to Proposition~\ref{prop:LambdaSmallEnough}; the corresponding figures are not reported since identical to the ones in Figure~\ref{optl10_fig} a). The latter is also in agreement with Proposition \ref{pr_l2lebl1} since  $\lambda_i<\beta\lambda_v$. A simulation with $\lambda_i=0.17\lambda_v>0$ (still smaller, but closer, to $\beta\lambda_v$) is shown in Figure \ref{fig_simul} b). In this case the state-space behaviour of such strategy is qualitatively as in  Figure \ref{optl2_betal1_fig}.

Figure 
\ref{fig_simul} c) shows the simulation results in the case $\lambda_i=\lambda_v>0$ in which the cost of the infected population  forces the state $i$ to stay strictly below the ICU threshold $i_M=0.005$. In this case it is convenient to continue the vaccination policy with the maximum effort even after that herd immunity has been reached. The state-space  behaviour of the solution corresponding to this control policy is shown in Figure \ref{optl2big_fig}.

\section{The  infinite horizon problem} \label{sec:InfiniteHorizon}

In this section we consider   problem~\eqref{eq:OptimalControlProblemNoDelay} with $T=\infty$, that is on the infinite time horizon $[0,\infty)$.

From now on,  the initial condition $(s_0,i_0)$ is taken in the viable set $B$. Moreover, we recall that, given any $\lambda_v\geq 0$ and $\lambda_i\geq 0$ with $(\lambda_v,\lambda_i)\ne(0,0)$, and for any $T\in [0,\infty)$, we denote by $\cP^T_{\lambda_v,\lambda_i,i_M}$ the optimal control problem in~\eqref{eq:OptimalControlProblemNoDelay}, while the infinite horizon problem is denoted by $\cP^\infty_{\lambda_v,\lambda_i,i_M}$.

\subsection{Reduction to a finite horizon}\label{RFHP}
If $\lambda_i=0$, 
we are allowed to use the preceding results for the finite-horizon problem on $[0,T]$ (for $T$ large enough)  to deduce necessary (Pontryagin) optimality conditions  for the infinite-horizon case. 


\begin{theorem}[Reduction to a finite horizon, case $\lambda_i=0$]\label{thm:ReductionFiniteTime}
Suppose $\lambda_i=0$ and let $v\in \cV$ be an optimal control for problem $\mathcal{P}^\infty_{{\lambda_v},0,i_M}$. 
 For every $T>\overline t(s_0,i_0)$ (defined in~\eqref{eq:DefinitionT0}), the restriction $v_T:=v_{|_{[0,T]}}$ is an optimal control for the finite horizon problem $\mathcal{P}^T_{{\lambda_v},0,i_M}$. 
\end{theorem}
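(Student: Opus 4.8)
The plan is to compare the infinite-horizon optimum $v$ against a finite-horizon optimum through an ``extension by zero'', exploiting that with $\lambda_i=0$ the running cost $\lambda_v v$ contributes nothing once vaccination is stopped, and that the safe set $A$ is forward invariant under the control $v\equiv0$. I begin by recording two elementary facts. First, since $v$ is feasible for $\mathcal P^\infty_{\lambda_v,0,i_M}$, its trajectory satisfies $i\le i_M$ on all of $[0,\infty)$, so the restriction $v_T:=v_{|_{[0,T]}}$ is in particular feasible for $\mathcal P^T_{\lambda_v,0,i_M}$. Second, every admissible finite-horizon trajectory has already entered $A$ by time $T$: if $w$ is feasible for $\mathcal P^T_{\lambda_v,0,i_M}$ and $\hat w\in\cV$ denotes its extension by $0$, then by the definition~\eqref{eq:DefinitionT0} of $\bar t(s_0,i_0)$ and the hypothesis $T>\bar t(s_0,i_0)$ there is $t^*\le\bar t(s_0,i_0)<T$ with $s^{\hat w}(t^*)\le\frac{\gamma}{\beta}$; as $s$ is strictly decreasing (Theorem~\ref{th_euri}\,\emph{(5)}) and $s^{\hat w}=s^w$ on $[0,T]$, this gives $s^w(T)<\frac{\gamma}{\beta}$, while $i^w(T)\le i_M$ by feasibility and continuity. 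Since $\Gamma(s)=i_M$ for $s\le\frac{\gamma}{\beta}$, the endpoint $(s^w(T),i^w(T))$ lies in $A=\{i\le\Gamma(s)\}$ (Theorem~\ref{lemma:ViabilityDelayFree}\,\emph{(1)}).

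Next I would fix a finite-horizon optimum $v^*_T$ for $\mathcal P^T_{\lambda_v,0,i_M}$, which exists by Corollary~\ref{cor_ocfc}, and denote by $m_T:=\int_0^T\lambda_v v^*_T\,dt<\infty$ the corresponding optimal value. By the second fact, its endpoint $(s^{v^*_T}(T),i^{v^*_T}(T))$ lies in the no-effort set $A=\cA_0$; appending the control $0$ on $[T,\infty)$ therefore produces a control $\widetilde{v^*}\in\cV$ whose trajectory remains in $A$ for all $t\ge T$ by forward invariance (Theorem~\ref{lemma:ViabilityDelayFree}\,\emph{(1)}), so $\widetilde{v^*}$ is feasible for $\mathcal P^\infty_{\lambda_v,0,i_M}$. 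Because $\lambda_i=0$, the appended tail costs nothing, whence $\int_0^\infty\lambda_v\widetilde{v^*}\,dt=\int_0^T\lambda_v v^*_T\,dt=m_T$.

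Finally I would chain the comparisons. Optimality of $v$ for the infinite-horizon problem yields $\int_0^\infty\lambda_v v\,dt\le\int_0^\infty\lambda_v\widetilde{v^*}\,dt=m_T<\infty$, and since the integrand $\lambda_v v$ is nonnegative the tail integral is nonnegative, so $\int_0^T\lambda_v v_T\,dt\le\int_0^\infty\lambda_v v\,dt\le m_T$. On the other hand $v_T$ is feasible for $\mathcal P^T_{\lambda_v,0,i_M}$ by the first fact, so $\int_0^T\lambda_v v_T\,dt\ge m_T$. The two inequalities force $\int_0^T\lambda_v v_T\,dt=m_T$, i.e.\ $v_T$ attains the finite-horizon minimum and is optimal for $\mathcal P^T_{\lambda_v,0,i_M}$, as claimed.

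I expect the only genuinely delicate point to be the second fact, namely that \emph{every} feasible finite-horizon trajectory has crossed below $\frac{\gamma}{\beta}$ strictly before $T$; this is exactly what the uniform-in-control threshold time $\bar t(s_0,i_0)$ together with Assumption~\ref{assum:UnderHerdFInal} is designed to guarantee, and it is the mechanism that makes the extension-by-zero feasible. Everything else is bookkeeping with nonnegative integrands, and it is precisely the hypothesis $\lambda_i=0$ that makes the tail of the infinite-horizon cost both nonnegative and costless to append.
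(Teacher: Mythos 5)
Your proof is correct and follows essentially the same route as the paper's: both hinge on extending a finite-horizon competitor by zero, using $T>\bar t(s_0,i_0)$ to guarantee $s(T)<\frac{\gamma}{\beta}$ (hence the endpoint lies in the safe set $A$ and the extension is feasible), and on the fact that the appended tail is costless when $\lambda_i=0$. The only cosmetic difference is that you compare directly against a finite-horizon optimum $v^*_T$ whereas the paper argues by contradiction with an arbitrary strictly better control; the mechanism is identical.
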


\begin{proof}  By contradiction, assume that $v_T\in L^\infty([0,T],V)$ is not optimal for $\cP^T_{\lambda_v,0,i_M}$. Since, on the other hand, an optimal control exists, then there exists $w\in L^\infty([0,T],V)$ such that
$$
J^T(w,s^w,i^w)<J^T(v_T,s^{v_T},i^{v_T}).
$$ 
Since by assumption we have $T>\bar{t}(s_0,i_0)$, by~definition of $\bar{t}(s_0,i_0)$ (see~\eqref{eq:DefinitionT0}) it holds that $s^w(T)<\frac{\gamma}{\beta}$.
Let us denote by $w_0$ the extension of $w$ to the interval $[0,\infty)$ by defining it equal to $0$ in $(T,\infty)$. It is clear, from uniqueness of the solution  to the Cauchy problem for the system of  state equations, that $s^{w_0}(T)=s^{w}(T)<\frac{\gamma}{\beta}$, and thus, by the performed viability analysis, that $w_0$ is an admissible control. This implies
\begin{equation*}
J^\infty(w_0,s^{w_0},i^{w_0})=\int_0^\infty\lambda_v w_0(t)\,dt =
\int_0^T\lambda_v w(t)\,dt<\int_0^T\lambda_v v(t)\;dt \le J^\infty(v,s^v,i^v),
\end{equation*}
which contradicts the optimality of $v$ for $\cP^\infty_{\lambda_v,0,i_M}$. 
\end{proof}

Summarizing, when the cost does not depend on the number of infected individuals, the fact that the herd-immunity threshold is reached in finite time allows us to reduce the infinite-horizon problem to a finite-horizon one.  We collect this argument in the subsequent statement.

\begin{cor}\label{cor:InfiniteTrivial}
Suppose $\lambda_i=0$. Then, for any $(s_0,i_0)\in B$,  problem $\cP^\infty_{\lambda_v,0,i_M}$ admits a unique  optimal control $v\in \cV$ (with finite cost), of the form 
\[
v(t)=\begin{cases}
v_M\;\;\;&\text{if}\;t\in (0, \tsw),\\
0\;\;\;&\text{if}\;t\in (\tsw,\infty).
\end{cases}
\]
for a $\tsw\in [0, \overline t(s_0,i_0)]$.
In particular, $v$ satisfies the feedback representation as in~\eqref{eq:feedbackRepresentation}. 
\end{cor}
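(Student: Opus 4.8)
The plan is to reduce everything to the finite-horizon theory already in hand. First I would fix an arbitrary $T>\overline t(s_0,i_0)$ and apply Corollary~\ref{cor_ocfc} together with assertion~\emph{(5)} of Theorem~\ref{prop:OptimumNoDelay}: since $\lambda_i=0$, the finite-horizon problem $\cP^T_{\lambda_v,0,i_M}$ has a \emph{unique} optimal control $v_0^T$, of bang-bang form with a single switch at a time $\tsw$, and the feedback representation~\eqref{eq:feedbackRepresentation} identifies $\tsw$ as the first instant at which the trajectory issued from $(s_0,i_0)$ under the constant control $v_M$ enters the no-effort set $A=\cA_0$. The crucial observation is that this reaching time depends only on the dynamics and the initial datum, not on $T$ (this is the same horizon-independence already exploited in the proof of Proposition~\ref{prop:LambdaSmallEnough}). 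Moreover, since $\{s\le\frac{\gamma}{\beta}\}\cap C\subseteq A$ (because $\Gamma(s)=i_M$ there), the set $A$ is entered no later than herd immunity is reached, whence $\tsw\le\overline t(s_0,i_0)$ by the very definition~\eqref{eq:DefinitionT0}.

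Next I would establish \emph{existence} by producing the explicit candidate. Define $v^\ast$ on $[0,\infty)$ as the common extension of the $v_0^T$, i.e.\ $v^\ast=v_M$ on $(0,\tsw)$ and $v^\ast=0$ on $(\tsw,\infty)$; this is well defined precisely because $\tsw$ does not depend on $T$. By the forward invariance of $A$ established in Theorem~\ref{lemma:ViabilityDelayFree}, once the trajectory enters $A$ at time $\tsw$ it remains there under the null control, so the constraint $i\le i_M$ holds on all of $[0,\infty)$ and $v^\ast$ is feasible, with finite cost $\lambda_v v_M\tsw$. To check optimality, take any feasible $\tilde v\in\cV$ for $\cP^\infty_{\lambda_v,0,i_M}$ and any $T>\overline t(s_0,i_0)$; the restriction $\tilde v|_{[0,T]}$ is feasible for $\cP^T_{\lambda_v,0,i_M}$, so the optimality of $v_0^T$ gives
\[
\int_0^\infty \lambda_v v^\ast(t)\,dt=\lambda_v v_M\tsw=\int_0^T \lambda_v v_0^T(t)\,dt\le\int_0^T \lambda_v \tilde v(t)\,dt\le\int_0^\infty \lambda_v \tilde v(t)\,dt,
\]
which shows that $v^\ast$ is optimal and, in particular, that an infinite-horizon optimum exists.

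Finally, for \emph{uniqueness}, let $v$ be any optimal control for $\cP^\infty_{\lambda_v,0,i_M}$. By Theorem~\ref{thm:ReductionFiniteTime}, for every $T>\overline t(s_0,i_0)$ the restriction $v|_{[0,T]}$ is optimal for $\cP^T_{\lambda_v,0,i_M}$, hence coincides with $v_0^T=v^\ast|_{[0,T]}$ by the uniqueness statement of Theorem~\ref{prop:OptimumNoDelay}\,\emph{(5)}; letting $T\to\infty$ yields $v=v^\ast$ almost everywhere on $[0,\infty)$, and the feedback representation~\eqref{eq:feedbackRepresentation} is inherited from the finite-horizon one. I expect the only genuinely delicate point to be the horizon-independence of the switching time $\tsw$: one must verify that enlarging $T$ cannot displace the switch, and this is exactly where the feedback characterization (the switch occurring precisely upon entry into $A$, a horizon-free event) is indispensable. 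Everything else is a routine comparison inequality followed by a passage to the limit $T\to\infty$.
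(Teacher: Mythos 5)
Your proof is correct and follows essentially the same route as the paper: reduction to the finite horizon via Theorem~\ref{thm:ReductionFiniteTime} combined with the uniqueness and feedback representation from assertion~\emph{(5)} of Theorem~\ref{prop:OptimumNoDelay}, with the bound $\tsw\le\overline t(s_0,i_0)$ coming from the horizon-free characterization of the switch as the entry time into $A$. The only difference is that you make the existence step explicit by constructing the candidate $v_M\mathds{1}_{[0,\tsw)}$ and verifying its optimality through the comparison inequality, whereas the paper starts from an assumed optimal control and merely notes at the end that the zero extension is feasible and trivially optimal; your version is slightly more self-contained on that point.
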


\begin{proof}

Let us consider an optimal control $v\in \cV$ for problem~$\cP^\infty_{\lambda_v,0,i_M}$. By Theorem~\ref{thm:ReductionFiniteTime}, for any $T>\overline t(s_0,i_0)$ we have that $v_{|_{[0,T]}}$ coincides with  the unique (by \emph{(B)} of Theorem \ref{prop:OptimumNoDelay})  optimal control for the finite-horizon problem $\cP^T_{\lambda_v,0,i_M}$, which is of the bang-bang form in~\eqref{eq:BangBangCOntrol} and admits the feedback representation~\eqref{eq:feedbackRepresentation}. We conclude by noting  that, since $T>\overline t(s_0,i_0)$ we have that $v(t)=0$ for all $(T,\infty)$ is a feasible and trivially optimal choice. 
\end{proof}

When the cost depends also on  $i$ (i.e., when $\lambda_i>0$), which, differently from the control, vanishes only as $t\to\infty$, the aforementioned argument cannot be applied. Nevertheless, in what follows we show that a reduction to finite horizon problems can be  obtained by a $\Gamma$-convergence argument. For a throughout and formal introduction to $\Gamma$-convergence, we refer to~\cite{DMas,Bra02book}.

In what follows, we denote by $I$ the infinite-horizon interval $[0,\infty)$.

\begin{theorem}\label{thm:GCTconv}
For every increasing sequence of positive numbers $T_n\to\infty$, the sequence of problems $\mathcal{P}^{T_n}_{{\lambda_v},{\lambda_i},i_M}$ $\Gamma$-converges to the limit problem 
$\mathcal{P}^\infty_{{\lambda_v},{\lambda_i},i_M}$, in the following sense:
\begin{enumerate}[leftmargin=*]
\item \emph{(liminf inequality).} For every sequence $v_n\in \cV$ such that $v_n=0$ a.e. in $(T_n,\infty)$,  $i^{v_n}\le i_M$, and 
$v_n\weaks v$ in $L^\infty(I,V)$ we have $i^v\le i_M$ and 
$$
\liminf_{n\to\infty}J^{T_n}(v_n,s^{v_n},i^{v_n})\ge J^\infty(v,s^v,i^v);
$$ 
\item \emph{(recovery sequence).} For every $v\in \cV$ such that $J^\infty(v,s^v,i^v)<\infty$ and $i^v\le i_M$, there exists $v_n\in \cV$ such that $i^{v_n}\le i_M$, $v_n=0$ a.e. in $(T_n,\infty)$,
$v_n\weaks v$ in $L^\infty(I,V)$ and
$$
\lim_{n\to\infty}J^{T_n}(v_n,s^{v_n},i^{v_n})= J^\infty(v,s^v,i^v).
$$ 
\end{enumerate}
\end{theorem}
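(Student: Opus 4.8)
The plan is to derive both halves of the $\Gamma$-convergence from a single stability property of the control-to-state map: whenever $v_n\weaks v$ in $L^\infty(I,V)$, the corresponding trajectories converge, $(s^{v_n},i^{v_n})\to(s^v,i^v)$ uniformly on every compact subinterval $[0,M]$. To establish this I would first note that, by Theorem~\ref{th_euri} and Remark~\ref{rem:Boundedness}, all trajectories stay in the compact triangle $\bT$ and, since $f$ is bounded there, are equi-Lipschitz; Arzel\`a--Ascoli together with a diagonal argument then yields a locally uniform limit along a subsequence. Passing to the limit in the integral form of~\eqref{eq:SIR2} identifies this limit as the unique solution associated with $v$, so the full sequence converges. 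The delicate point is the bilinear term $\int_0^t v_n s^{v_n}$, which I would split as $\int_0^t v_n s^{v}+\int_0^t v_n(s^{v_n}-s^{v})$: the first integral converges by weak* convergence (testing against the $L^1$ function $\mathbf 1_{[0,t]}s^v$) and the second vanishes because $s^{v_n}\to s^v$ uniformly while $\|v_n\|_\infty\le v_M$.

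Granted this stability, the \emph{liminf inequality} follows quickly. Since $i^{v_n}\le i_M$ and $i^{v_n}\to i^v$ pointwise, the constraint passes to the limit, giving $i^v\le i_M$. For the costs, fix $M>0$; for $n$ large enough that $T_n>M$ the nonnegativity of the integrand gives $J^{T_n}(v_n,s^{v_n},i^{v_n})\ge\int_0^M(\lambda_v v_n+\lambda_i i^{v_n})\,dt$. Letting $n\to\infty$, the term $\int_0^M\lambda_i i^{v_n}$ converges to $\int_0^M\lambda_i i^v$ by uniform convergence, and $\int_0^M\lambda_v v_n\to\int_0^M\lambda_v v$ by weak* convergence, so $\liminf_n J^{T_n}\ge\int_0^M(\lambda_v v+\lambda_i i^v)\,dt$. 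Since the integrand is nonnegative, letting $M\to\infty$ and invoking monotone convergence (the limiting integral being finite by Lemma~\ref{lemma:BoundIntegral}) yields $\liminf_n J^{T_n}\ge J^\infty(v,s^v,i^v)$.

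For the \emph{recovery sequence} I would take the truncation $v_n:=v\,\mathbf 1_{[0,T_n]}$. On $[0,T_n]$ the control, hence the trajectory, coincides with that of $v$, so $i^{v_n}=i^v\le i_M$ there. The substantive point is feasibility on the tail $(T_n,\infty)$, and this is exactly where the viability analysis enters: for every $T_n>\bar{t}(s_0,i_0)$ (hence for all but finitely many $n$, since $T_n\to\infty$) one has $s^{v}(T_n)<\frac{\gamma}{\beta}$ by Assumption~\ref{assum:UnderHerdFInal} and Theorem~\ref{th_euri}, so the state at time $T_n$ lies in the safe, forward-invariant set $A=\cA_0$ of Theorem~\ref{lemma:ViabilityDelayFree}; therefore switching to $v\equiv0$ keeps $i\le i_M$ for all $t\ge T_n$, and $v_n$ is globally feasible. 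The weak* convergence $v_n\weaks v$ is immediate since $\mathbf 1_{[0,T_n]}\to1$, and because $i^{v_n}=i^v$ on $[0,T_n]$ we have $J^{T_n}(v_n,s^{v_n},i^{v_n})=\int_0^{T_n}(\lambda_v v+\lambda_i i^v)\,dt\to J^\infty(v,s^v,i^v)$ by monotone convergence, using $J^\infty(v)<\infty$. The finitely many indices with $T_n\le\bar{t}(s_0,i_0)$ are irrelevant to the $\Gamma$-limit and can be assigned any globally feasible control vanishing after $T_n$ (such a control exists once $T_n$ exceeds the reaching time of $A$ under the constant control $v_M$, cf.\ Theorem~\ref{lemma:ViabilityDelayFree}).

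I expect the main obstacle to be twofold. First, the rigorous justification of the trajectory stability under merely weak* convergence of the controls---in particular the careful treatment of the product $v_n s^{v_n}$---is the technical heart of the liminf inequality and should be isolated as a preliminary lemma. Second, on the recovery side the nontrivial content is not the convergence (which is routine truncation) but the \emph{global} feasibility of the truncated controls; this holds only because the viability analysis guarantees that a feasible trajectory necessarily enters the forward-invariant safe set $A$ in finite time, so that cutting the control to zero afterwards cannot violate the ICU constraint.
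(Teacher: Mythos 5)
Your proposal is correct and follows essentially the same route as the paper's proof: the trajectory-stability lemma under weak* convergence of controls (which the paper simply imports as Lemma~\ref{lem:conv} from an earlier work, while you sketch its Arzel\`a--Ascoli proof, including the correct splitting of the bilinear term), lower semicontinuity of the cost (via truncation to $[0,M]$ and monotone convergence, in place of the paper's Fatou-plus-weak*-l.s.c.\ argument, which is equivalent), and the identical truncated recovery sequence $v\,\mathbf 1_{[0,T_n]}$ whose tail feasibility rests on $s^v(T_n)<\frac{\gamma}{\beta}$ and the invariance of the safe region. The only cosmetic slip is invoking Assumption~\ref{assum:UnderHerdFInal} for the recovery step, where the paper only needs $s^v_\infty<\frac{\gamma}{\beta}$ from Theorem~\ref{th_euri}; this does not affect correctness.
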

As a preliminary step to  the proof of the theorem, we state an useful lemma on  convergence of solutions to (controlled) differential equations, whose proof is the same of~\cite[Lemma 7.5]{FreddiGoreac23}, because the arguments used there work also for our (albeit different) state equations.

\begin{lemma}\label{lem:conv}
If $v_n\weaks v$ in $L^\infty(I;V)$, then $s^{v_n}\to s^v$ and $i^{v_n}\to i^v$ uniformly on every bounded subinterval $J\subset I$.
\end{lemma}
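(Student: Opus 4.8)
The plan is to combine uniform a priori estimates with an Arzel\`a--Ascoli compactness argument and then to pass to the limit in the integral formulation of the state equations, crucially exploiting the fact that the control $v$ enters the dynamics~\eqref{eq:SIR2} \emph{linearly}. First I would set up the uniform bounds. By assertion \emph{(1)} of Theorem~\ref{th_euri} and by Remark~\ref{rem:Boundedness}, each trajectory $(s^{v_n},i^{v_n})$ stays in the triangle $\bT$, so $0\le s^{v_n},i^{v_n}\le 1$ uniformly in $n$. Reading off the right-hand sides of~\eqref{eq:SIR2}, the bounds $|(s^{v_n})'|\le \beta+v_M$ and $|(i^{v_n})'|\le \beta+\gamma$ hold almost everywhere, so both sequences are uniformly bounded and equi-Lipschitz on $I$. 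Fixing any bounded subinterval $J=[0,\tau]$, the Arzel\`a--Ascoli theorem provides, along a subsequence (not relabelled), uniform limits $s^{v_n}\to \bar s$ and $i^{v_n}\to \bar i$ in $C(J)$.

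Next I would pass to the limit in the integral equations
\begin{align*}
s^{v_n}(t) &= s_0 - \int_0^t \big(\beta\, s^{v_n} i^{v_n} + v_n\, s^{v_n}\big)\,d\xi,\\
i^{v_n}(t) &= i_0 + \int_0^t \big(\beta\, s^{v_n} i^{v_n} - \gamma\, i^{v_n}\big)\,d\xi.
\end{align*}
In the $i$-equation, and in the $\beta s i$ term of the $s$-equation, the integrands are products of uniformly convergent sequences, hence converge uniformly, and the limit passes under the integral sign. The only delicate term is $\int_0^t v_n\, s^{v_n}\,d\xi$, which I would split as
$$
\int_0^t v_n\,(s^{v_n}-\bar s)\,d\xi + \int_0^t v_n\, \bar s\,d\xi.
$$
The first integral is bounded by $v_M\int_0^t |s^{v_n}-\bar s|\,d\xi\to 0$ by uniform convergence on $J$; for the second, since $\bar s$ is bounded and $[0,t]$ has finite measure, the map $\xi\mapsto \bar s(\xi)\,\mathbf{1}_{[0,t]}(\xi)$ lies in $L^1(I)$, so $v_n\weaks v$ yields $\int_0^t v_n\, \bar s\,d\xi\to \int_0^t v\, \bar s\,d\xi$. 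Consequently $(\bar s,\bar i)$ solves the integral form of~\eqref{eq:SIR2} with control $v$, and by the uniqueness asserted in Theorem~\ref{th_euri} we conclude $(\bar s,\bar i)=(s^v,i^v)$ on $J$.

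Finally, since the identified limit $(s^v,i^v)$ is independent of the extracted subsequence, the standard argument (every subsequence admits a further subsequence converging uniformly on $J$ to the same limit) upgrades the convergence to the whole sequence, uniformly on $J$; as $J$ was an arbitrary bounded subinterval, the claim follows. The main obstacle is precisely the term $v_n\, s^{v_n}$: weak-$*$ convergence of $v_n$ alone does not pass to the limit in a product, and only the linear appearance of the control lets us pair the weakly-$*$ convergent factor $v_n$ against the \emph{strongly} (uniformly) convergent factor $s^{v_n}$, which is the weak-$*$-times-strong argument isolated above. Had the control entered the dynamics nonlinearly, this step would instead require a convexity or relaxation argument.
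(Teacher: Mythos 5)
Your proof is correct. The paper does not write out an argument for this lemma --- it defers entirely to the cited reference \cite[Lemma 7.5]{FreddiGoreac23} --- but the strategy you give is precisely the one that reference (and hence the paper) relies on: uniform boundedness and equi-Lipschitz estimates coming from the invariance of $\bT$, Arzel\`a--Ascoli on bounded subintervals, passage to the limit in the integral formulation by pairing the weakly-$*$ convergent controls against the uniformly convergent states (which works because the control enters affinely), identification of the limit via uniqueness of solutions, and the subsequence argument to upgrade to the whole sequence. Your write-up is in fact more self-contained than the paper's.
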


\begin{proof}[Proof of Theorem \ref{thm:GCTconv}]  Let us prove statement \emph{(1)} first. By the previous lemma, the assumption $v_n\weaks v$ in $L^\infty(I;V)$ implies 
that $i^{v_n}$ converges to $i^{v}$ uniformly on the bounded subintervals of $I$, and this in particular implies $i^v(t)\le i_M$ for every $t\in I$ (since we recall from Theorem~\ref{th_euri} that $\lim_{t\to \infty}i^v(t)=0$). 
Let us now remark that 
\begin{equation}\label{eq:ConvIvnTechnical}
\liminf_{n\to\infty}\int_0^{T_n}i^{v_n}=\liminf_{n\to\infty}\int_0^{\infty}i^{v_n}1_{[0,T_n]}\,dt\ge \int_0^{\infty}i^{v}\,dt
\end{equation}  since the sequence $i^{v_n}1_{[0,T_n]}$ converges to $i^{v}$ pointwise on $(0,\infty)$ and by using Fatou's lemma (see, for instance, \cite[Theorem 11.31]{RudinPMA}).
Then, by using~\eqref{eq:ConvIvnTechnical}, the fact that $v_n=0$ a.e. in $(T_n,\infty)$ and by the fact that the functional $v\mapsto \int_0^\infty v\;dt$ is weakly* lower semicontinuous in $L^\infty(I;V)$ (see~\cite[Theorem 5.1]{FreddiGoreac23}),  we have
\begin{equation}\label{liJ}\begin{aligned}
\liminf_{n\to\infty}J^{T_n}(v_n,s^{v_n},i^{v_n})=&\liminf_{n\to\infty}\int_0^{T_n}\lambda_v v_n+\lambda_i i^{v_n}\,dt=\lambda_v \liminf_{n\to\infty}\int_0^\infty v_n\,dt+\lambda_i\liminf_{n\to\infty}\int_0^{T_n}  i^{v_n}\,dt \\
\ge&\int_0^\infty\lambda_v v+\lambda_i i^{v}\,dt= J^\infty(v,s^v,i^v).
\end{aligned}
\end{equation}
concluding the first part of the proof.\\
  It remains to prove Item~\emph{(2)}, i.e,. the existence of a recovery sequence. 
Since $J^\infty(v,s^v,i^v)<\infty$,  the control $v$ belongs to $L^1(I)$ and, moreover, $\lim_{t\to \infty}s^v(t)<\frac{\gamma}{\beta}$, recalling~Theorem~\ref{th_euri}.
Let us consider, for any $n\in \N$,
$$
v_n(t)=\begin{cases}
v(t)&\mbox{ if }t\in[0,T_n),\\
0&\mbox{ if }t\in(T_n,\infty).
\end{cases}
$$
Since $T_n\to\infty$, for every $n$ large enough we have $s^{v_n}(T_n)=s^v(T_n)<\frac{\gamma}{\beta}$, and this in particular implies that $i^{v_n}(t)\le i_M$ for every $t\in I$. Indeed, in $(0,T_n]$, 
we have $i^{v_n}=i^v\le i_M$, while in $(T_n,\infty)$ the constraint holds since $i^{v_n}(\cdot)$ is decreasing.

Moreover, $v_n\weaks v$ in $L^\infty(I,V)$ (by Lebesgue's dominated convergence theorem) and by monotone convergence we have
\begin{equation}\begin{aligned}
\lim_{n\to\infty}J^{T_n}(v_n,s^{v_n},i^{v_n})&=\lim_{n\to\infty}\int_0^{T_n}\lambda_v v_n+\lambda_i i^{v_n}\,dt=\lambda_v\lim_{n\to\infty}\int_0^{T_n} v_n\;dt+\lambda_i\lim_{n\to \infty} \int_0^{T_n} i^{v_n}\,dt\\&=\lambda_v\int_0^{\infty}v\;dt +\lambda_i\int_0^\infty i^v\;dt=J^\infty(v,s^v,i^v),
\end{aligned}
\end{equation} 
concluding the proof.
\end{proof}

In the next statement we state important consequences of Theorem~\ref{thm:GCTconv}.

\begin{cor}\label{cor:occonv} For any $T>0$, let us denote by $v_T\in\cV$ a control obtained by taking an optimal control for $\cP^{T}_{{\lambda_v},{\lambda_i},i_M}$ and extending it by zeroes on $(T,\infty)$. Then 
\begin{enumerate}[leftmargin=*]
\item there exists an increasing sequence $T_n\to\infty$ and $v\in \cV$ such that $v_{T_n}\weaks v$;
\item if $(T_n)$ is sequence of positive numbers such that $T_n\to\infty$ and $v_{T_n}\weaks v$, then $v$ is an optimal control for problem $\mathcal{P}^{\infty}_{{\lambda_v},{\lambda_i},i_M}$.
\end{enumerate}
\end{cor}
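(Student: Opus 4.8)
The plan is to read both items as the standard ``compactness $+$ $\Gamma$-convergence'' package, using Theorem~\ref{thm:GCTconv} as the only nontrivial analytic input and invoking the finite-horizon existence result (Corollary~\ref{cor_ocfc}) for the objects $v_T$.

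For \emph{(1)} I would first observe that each $v_T$ takes values in the bounded set $V=[0,v_M]$, so the whole family is bounded in $L^\infty(I)$ by $v_M$. Since $L^1(I)$ is separable, bounded subsets of $L^\infty(I)=(L^1(I))^\ast$ are weak* sequentially compact (Banach--Alaoglu together with metrizability of the weak* topology on bounded sets). Hence, starting from any increasing sequence $T_n\to\infty$, the sequence $v_{T_n}$ admits a weak* convergent subsequence, whose limit $v$ still lies in $\cV$ because the admissible set $\{u\in L^\infty(I):0\le u\le v_M \text{ a.e.}\}$, being an intersection of weak*-closed half-spaces, is weak* closed. Relabeling the subsequence gives \emph{(1)}.

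For \emph{(2)} I fix a sequence $T_n\to\infty$ with $v_{T_n}\weaks v$ and set $m_n:=J^{T_n}(v_{T_n},s^{v_{T_n}},i^{v_{T_n}})$, which by Corollary~\ref{cor_ocfc} is the (finite) minimal value of $\cP^{T_n}_{\lambda_v,\lambda_i,i_M}$; since $v_{T_n}$ is extended by zeroes and $J^{T_n}$ only integrates over $[0,T_n]$, this $m_n$ indeed equals the finite-horizon optimum. Each $v_{T_n}$ is feasible and vanishes a.e.\ on $(T_n,\infty)$, so the liminf inequality Theorem~\ref{thm:GCTconv}\emph{(1)} applies and yields both $i^v\le i_M$ (so $v$ is feasible for $\cP^\infty_{\lambda_v,\lambda_i,i_M}$) and
\[
J^\infty(v,s^v,i^v)\le \liminf_{n\to\infty} m_n.
\]
To turn this into optimality I compare with an arbitrary feasible competitor $w$ for $\cP^\infty_{\lambda_v,\lambda_i,i_M}$; if $J^\infty(w,s^w,i^w)=\infty$ there is nothing to prove, so assume it is finite. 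By the recovery-sequence property Theorem~\ref{thm:GCTconv}\emph{(2)} there exist $w_n\weaks w$ with $i^{w_n}\le i_M$ on all of $I$, $w_n=0$ a.e.\ on $(T_n,\infty)$, and $J^{T_n}(w_n,s^{w_n},i^{w_n})\to J^\infty(w,s^w,i^w)$. The decisive point is that $i^{w_n}\le i_M$ holds in particular on the truncated interval $[0,T_n]$, so each $w_n$ is admissible for $\cP^{T_n}_{\lambda_v,\lambda_i,i_M}$; minimality of $v_{T_n}$ then gives $m_n\le J^{T_n}(w_n,s^{w_n},i^{w_n})$. Chaining the estimates,
\[
J^\infty(v,s^v,i^v)\le \liminf_{n\to\infty} m_n\le \lim_{n\to\infty}J^{T_n}(w_n,s^{w_n},i^{w_n})=J^\infty(w,s^w,i^w),
\]
and since $w$ is arbitrary, $v$ is optimal, proving \emph{(2)}.

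The analytic heart of the argument is already packaged inside Theorem~\ref{thm:GCTconv}, so the remaining obstacle is purely bookkeeping, and it is exactly the step I flagged as decisive: I must make sure the recovery competitors $w_n$ satisfy the state constraint on the \emph{truncated} horizon $[0,T_n]$ (otherwise the finite-horizon minimality $m_n\le J^{T_n}(w_n,\dots)$ cannot be invoked) and that the finite-horizon functional $J^{T_n}$, which ignores the tail, matches the value assigned by the $\Gamma$-convergence functionals to the zero-extended controls. Both facts are built into the construction in Theorem~\ref{thm:GCTconv}, so no genuinely new estimate is needed; the proof is a direct application of the fundamental theorem of $\Gamma$-convergence (convergence of minimizers to a minimizer).
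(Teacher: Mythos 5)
Your proof is correct and follows essentially the same route as the paper: part \emph{(1)} is the same Banach--Alaoglu compactness argument, and part \emph{(2)} is exactly the ``variational property of $\Gamma$-convergence'' that the paper invokes by citation, which you simply unfold into the explicit liminf/recovery-sequence chain. The only point worth noting (glossed over in both your write-up and the paper's) is that the zero-extended minimizers $v_{T_n}$ satisfy $i^{v_{T_n}}\le i_M$ on all of $[0,\infty)$ only once $T_n>\bar t(s_0,i_0)$, so the liminf inequality should be applied along the tail of the sequence, which costs nothing.
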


\begin{proof}Part \emph{(1)} of the statement follows by the fact that $(v_T)$ is an equi-bounded family in $L^\infty(I;V)$ and the existence of a weak* converging sub-sequence follows by Alaoglu's theorem.

We now prove Item~\emph{(2)}. By the fact that  $(v_{T_n},s^{v_{T_n}},i^{v_{T_n}})$ are equi-bounded (recall Theorem~\ref{th_euri} and Remark~\ref{rem:Boundedness}) it can be seen that  $(v_{T_n},s^{v_{T_n}},i^{v_{T_n}})$ weakly* converges in $\cV\times W^{1,\infty}(I,\R^2)$  to $(v,s^{v},i^{v})$, and the claim follows by the variational property of $\Gamma$-convergence
(see, for instance, \cite[Corollary 7.17]{DMas}).
\end{proof}

\begin{remark}
As a consequence, problem $\mathcal{P}^{\infty}_{{\lambda_v},{\lambda_i},i_M}$ admits a solution. Moreover such solution has a finite cost: this is a consequence of Lemma~\ref{lemma:BoundIntegral}, since it implies that any feasible control $v\in L^1([0,\infty);V)$ has finite cost. 
\end{remark}

%
%

\subsection{Characterization of an optimal control}
Somehow collecting all the previous results, we now characterize an  optimal control for the infinite-horizon control problem.

\begin{theorem}\label{thm:CarInfiniteHor}
   Consider any $(s_0,i_0)\in B$. The problem $\cP^\infty_{\lambda_v,\lambda_i,i_M}$
admits an optimal control $\hv\in \cV$ in the bang-bang form, i.e., there exists $\overline{t}_\star\in [0,\infty]$ such that 
\begin{equation}\label{vlim}
\hv(t)=\begin{cases}
v_M,\;\;\;&\text{if } t\in [0,\overline{t}_\star),\\
0,\;\;\;&\text{if }t\in (\overline{t}_\star,\infty).
\end{cases}
\end{equation}
Moreover, 
\begin{enumerate}[leftmargin=*]
\item $\lambda_i=0$ implies $\overline{t}_\star\leq \bar t(s_0,i_0)$, $v$ is the unique optimal control and admits  the feedback representation~\eqref{eq:feedbackRepresentation},
\item $\lambda_v=0$ if and only if 
$\overline{t}_\star=\infty$.
\end{enumerate}
\end{theorem}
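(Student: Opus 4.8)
The plan is to obtain an optimal control for $\cP^\infty_{\lambda_v,\lambda_i,i_M}$ as a weak* limit of finite-horizon optima, exploiting the fact that the limit inherits the one-switch bang-bang structure precisely because each finite-horizon optimum is encoded by a single scalar, its switching time. First I would fix an increasing sequence $T_n\to\infty$ with $T_n>\bar t(s_0,i_0)$, so that Assumption~\ref{assum:UnderHerdFInal} holds for each problem $\cP^{T_n}_{\lambda_v,\lambda_i,i_M}$, and select (via Corollary~\ref{cor_ocfc}) an optimal control $v_{T_n}$ for each of them. By assertion~\emph{(4)} of Theorem~\ref{prop:OptimumNoDelay}, every $v_{T_n}$ has the form $v_{T_n}=v_M\,1_{[0,t_{\star,n})}$ for some switching time $t_{\star,n}\in[0,T_n]$; extending it by zero on $(T_n,\infty)$ leaves this expression unchanged on all of $I=[0,\infty)$, so $v_{T_n}$ fits the framework of Corollary~\ref{cor:occonv}.

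The core step is to pass to the limit while preserving the structure. Since $t_{\star,n}\in[0,\infty]$ and $[0,\infty]$ is compact, I would extract a (non-relabelled) subsequence along which $t_{\star,n}\to\overline{t}_\star\in[0,\infty]$. Then $v_{T_n}=v_M\,1_{[0,t_{\star,n})}\to v_M\,1_{[0,\overline{t}_\star)}=:\hv$ pointwise almost everywhere, and, being dominated by $v_M$, this convergence is also weak* in $L^\infty(I;V)$ by dominated convergence, i.e.\ $v_{T_n}\weaks\hv$. By assertion~\emph{(2)} of Corollary~\ref{cor:occonv}, the limit $\hv$ is an optimal control for $\cP^\infty_{\lambda_v,\lambda_i,i_M}$, and by construction it has exactly the bang-bang form~\eqref{vlim} with switching time $\overline{t}_\star$; this simultaneously yields existence and the claimed structure. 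The point I expect to be delicate — and the real reason the statement holds — is that weak* limits of bang-bang controls need not be bang-bang (they may take intermediate values in the limit); here this pathology is ruled out because Theorem~\ref{prop:OptimumNoDelay} forces the single ``$v_M$-then-$0$'' pattern, reducing each $v_{T_n}$ to the scalar $t_{\star,n}$, whose convergence produces pointwise (not merely weak*) convergence of the controls and hence preserves the structure.

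It remains to establish the two refinements, where I assume the non-degenerate regime $v_M>0$. For~\emph{(1)}, when $\lambda_i=0$ the entire assertion (existence, uniqueness, $\overline{t}_\star\le\bar t(s_0,i_0)$, and the feedback representation~\eqref{eq:feedbackRepresentation}) is already contained in Corollary~\ref{cor:InfiniteTrivial}, which I would simply invoke. For the equivalence in~\emph{(2)} I argue both directions on the constructed $\hv$. If $\lambda_v=0$, then assertion~\emph{(4)} of Theorem~\ref{prop:OptimumNoDelay} gives $t_{\star,n}=T_n$ for every $n$, whence $\overline{t}_\star=\lim_n T_n=\infty$. Conversely, if $\lambda_v>0$ then $\overline{t}_\star<\infty$: otherwise $\hv\equiv v_M$ almost everywhere on $I$, forcing $J^\infty(\hv)\ge\lambda_v v_M\int_0^\infty\,dt=\infty$, which contradicts the finiteness of the optimal cost (a consequence of Lemma~\ref{lemma:BoundIntegral}, as noted after Corollary~\ref{cor:occonv}). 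This closes the equivalence and completes the argument.
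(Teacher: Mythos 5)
Your proposal is correct and follows essentially the same route as the paper: finite-horizon optima are bang-bang with a single switching time by Theorem~\ref{prop:OptimumNoDelay}\emph{(4)}, a subsequence of switching times converges in $[0,\infty]$, dominated convergence upgrades this to weak* convergence of the controls, Corollary~\ref{cor:occonv}\emph{(2)} yields optimality of the limit, and the two refinements are handled exactly as in the paper via Corollary~\ref{cor:InfiniteTrivial} and the finiteness of the optimal cost from Lemma~\ref{lemma:BoundIntegral}. The only (harmless) difference is one of ordering: the paper first extracts a weak* limit via Corollary~\ref{cor:occonv}\emph{(1)} and then identifies it with $v_M\,1_{[0,\overline{t}_\star)}$ by uniqueness of weak* limits, whereas you build the bang-bang candidate directly and then certify its optimality.
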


\begin{proof} 
By  Corollary~\ref{cor:occonv},  there exist an optimal control $\hv$  for $\cP^\infty_{\lambda_v,\lambda_i,i_M}$ and an increasing  sequence $T_n\to \infty$ such that 
$v^{T_n}\weaks \hv$, where $v^{T_n}$ is 
the extension by zeroes of an optimal control for $\cP^{T_n}_{\lambda_v,\lambda_i,i_M}$.
 By Theorem~\ref{prop:OptimumNoDelay}, for any $n$,  there exists $\tsw^n\in [0,T_n)$ such that  
 $v^{T_n}=v_M 1_{[0,\tsw^n)}$. 
  This implies that 
 there exist   $\overline{t}_\star\in [0,\infty]$ and a subsequence $(n_k)$ such that $\tsw^{n_k}\to \overline{t}_\star$. By Lebesgue dominated convergence theorem, for every $\varphi\in L^1(0,\infty)$ we have
 $$
\lim_{k\to\infty}\int_0^\infty v^{T_{n_k}}\varphi\,dt=\lim_{k\to\infty}\int_0^{t^{n_k}_s} v_M\varphi\,dt=
\lim_{k\to\infty}\int_0^{\overline{t}_\star}v_M\varphi\,dt.
 $$
Thus, $v^{T_{n_k}}\weaks v_M1_{[0,{\overline{t}_\star})}$ and the claim \eqref{vlim} follows 
by uniqueness of the limit. 

Assertion \emph{(1)} of the moreover part of the statement is provided by Corollary~\ref{cor:InfiniteTrivial}. Let us now prove \emph{(2)}.
 If $\lambda_v>0$, the case $\overline{t}_\star=\infty$ is excluded, since otherwise the optimal cost would be infinite, in contradiction with the fact that any feasible control $v\in L^1(I;V)$  has finite cost, by Lemma~\ref{lemma:BoundIntegral}. If $\lambda_v=0$, by \emph{(A)} of Theorem~\ref{prop:OptimumNoDelay} we have $t_\star^n=T_n$ for all $n\in \N$, and thus $\overline{t}_\star=\lim_{n\to \infty}T_n=\infty$, concluding the proof.
\end{proof}






\section{Conclusions}

In this paper  we have analyzed the problem of optimal vaccination for an SIR model that includes a constraint on the ICU capacity. Our results can be summarized as follows.
\begin{itemize}[leftmargin=*]
\item After a preliminary viability analysis, we used Pontryagin's  necessary conditions to  prove that the optimal controls have a bang-bang structure {\color{black} with at most one switch.  The resulting optimal strategy consists in starting the vaccination campaign at the maximal rate as soon as possible and concentrate the intervention in a unique time interval whose length depends on the cost coefficients $\lambda_i$ and $\lambda_v$.}
    \item We identified the conditions under which vaccination should be implemented or halted. In particular, we showed that when the cost $\lambda_i$ associated with infection is significantly lower than the cost of vaccination {\color{black}(i.e., $\lambda_i\le\beta\lambda_v$, see Proposition \ref{pr_l2lebl1}),} the optimal control strategy primarily focuses on minimizing the latter, resulting in a vaccination policy that stops before the epidemic peak. 
    If, on the contrary, the two competing costs are comparable, the optimal strategy results in a prolonged vaccination period. {\color{black}
Another important remark is that when $\lambda_i$ is small enough depending on  $\lambda_v$, any optimal control   coincides with the one corresponding to the case $\lambda_i=0$.  This, in particular, proves the uniqueness of the optimal control whenever the cost of the vaccination program is ``prevalent'' with respect to the cost of treatment of infected individuals. }
    \item Numerical simulations were conducted to illustrate the theoretical results, confirming the effectiveness of the optimal control strategies derived in our analysis.
\end{itemize}
These results contributed to the understanding of optimal vaccination policies in the presence of ICU constraints and thus provide valuable insights for the design of public health interventions during epidemic outbreaks.

\noindent
{\bf Acknowledgements.}  
 M.D.R. and L.F. are members of GNAMPA--INdAM.

\

\bibliography{bib_SIR} 
\bibliographystyle{abbrv}

\end{document}